\documentclass[11pt]{article}
\usepackage{amsmath}
\usepackage{amsthm}
\usepackage{amssymb}
\usepackage{fullpage}
\usepackage{authblk}
\usepackage{graphics}
\usepackage{bm}
\usepackage{setspace}
\usepackage[numbers,sort&compress]{natbib}

\usepackage{amsfonts}
\usepackage{eucal}
\usepackage{latexsym}
\usepackage{mathdots}
\usepackage{mathrsfs}
\usepackage{enumitem}

\usepackage[all]{xy}

%%% Mike's packages:

%\usepackage{fullpage}
\usepackage{color}

% Rob's macros:

%%Short-forms:

%%%environments

\newcommand{\be}{\begin{equation}}
\newcommand{\ee}{\end{equation}}
\newcommand{\ba}{\begin{eqnarray}}
\newcommand{\ea}{\end{eqnarray}}
\newcommand{\bal}{\begin{align}}
\newcommand{\eal}{\end{align}}
\newcommand{\baln}{\begin{align*}}
\newcommand{\ealn}{\end{align*}}
\newcommand{\bi}{\begin{itemize}}
\newcommand{\ei}{\end{itemize}}
\newcommand{\bn}{\begin{enumerate}}
\newcommand{\en}{\end{enumerate}}
\newcommand{\bbm}{\begin{bmatrix}}
\newcommand{\ebm}{\end{bmatrix}}
\newcommand{\bpm}{\begin{pmatrix}}
\newcommand{\epm}{\end{pmatrix}}
\newcommand{\bsm}{\left ( \begin{smallmatrix}}
\newcommand{\esm}{\end{smallmatrix} \right) }
\newcommand{\bp}{\begin{proof}}
\newcommand{\ep}{\end{proof}}
\newcommand{\nn}{\nonumber}

%%%text format

\newcommand{\mr}{\ensuremath{\mathrm}}
\newcommand{\scr}{\ensuremath{\mathscr}}
\newcommand{\mbf}{\ensuremath{\mathbf}}
\newcommand{\mc}{\ensuremath{\mathcal}}
\newcommand{\mf}{\ensuremath{\mathfrak}}

\newcommand{\wt}{\ensuremath{\widetilde}}

%%%Greek

\newcommand{\Ga}{\ensuremath{\Gamma}}
\newcommand{\ga}{\ensuremath{\gamma}}
\newcommand{\Om}{\ensuremath{\Omega}}

\newcommand{\la}{\ensuremath{\lambda }}

\newcommand{\eps}{\ensuremath{\epsilon }}

%%% Standard definitions

\def\C{\mathbb{C}}

\def\D{\mathbb{D}}

\def\N{\mathbb{N}}
\def\B{\mathbb{B}}

\def\cH{\mathcal{H}}
\def\cG{\mathcal{G}}
\def\bH{\mathbb{H}}
\def\cJ{\mathcal{J}}

\def\hardy{\mathbb{H} ^2 _d}
\def\mult{\mathbb{H} ^\infty _d}

\def\bc{\underset{\mbox{\tiny \mbox{\tiny b.c.}}}{\subseteq}}
\def\cc{\underset{\mbox{\tiny \mbox{\tiny c.c.}}}{\subseteq}}

\renewcommand{\H}{\ensuremath{\mathcal{H} }}
%Eli's macro for mathcal{H}

\newcommand{\J}{\ensuremath{\mathcal{J} }}

\newcommand{\M}{\ensuremath{\mathfrak{M} }}
\newcommand{\K}{\ensuremath{\mathcal{K} }}
\renewcommand{\L}{\ensuremath{\mathscr{L} }}
\newcommand{\F}{\ensuremath{\mathbb{F} }}

\newcommand{\cF}{\ensuremath{\mathcal{F}}}

%%%macros with arguments

\newcommand{\ip}[2]{\ensuremath{\langle {#1} , {#2} \rangle}}
\newcommand{\ipcn}[2]{\ensuremath{\left( {#1} , {#2} \right) _{\C ^n}}}

\renewcommand{\dim}[1]{\ensuremath{\mathrm{dim} \left( {#1} \right) }}
\newcommand{\ran}[1]{\ensuremath{\mathrm{Ran} \left( {#1} \right) }}

\renewcommand{\ker}[1]{\ensuremath{\mathrm{Ker} \left( {#1} \right) }}

\newcommand{\re}[1]{\ensuremath{\mathrm{Re} \left( {#1} \right) }}

%Eli's macros

\newcommand{\hilbmod}{\ensuremath{\mathrm{Hilb}}}
\newcommand{\cC}{\ensuremath{\mathcal{C}}}
\newcommand{\cS}{\ensuremath{\mathcal{S}}}
\newcommand{\sM}{\ensuremath{\mathscr{M}}}
\newcommand{\sN}{\ensuremath{\mathscr{N}}}

%%% Theorem environments

\numberwithin{equation}{section}
\numberwithin{subsection}{section}

\newtheorem{thm}[subsection]{Theorem}

\newtheorem{lemma}[subsection]{Lemma}
\newtheorem{prop}[subsection]{Proposition}
\newtheorem{cor}[subsection]{Corollary}
\newtheorem*{thm*}{Theorem}

\theoremstyle{definition}
\newtheorem{defn}[subsection]{Definition}
\newtheorem{remark}[subsection]{Remark}

\title{A de Branges-Beurling theorem for the full Fock space}

\author[1]{Robert T.W. Martin}
\affil[1]{\footnotesize University of Manitoba}

\author[2]{Eli Shamovich}
\affil[2]{ \footnotesize Ben-Gurion University of the Negev}

\date{}

\begin{document}
\bibliographystyle{abbrvnat}
\maketitle

\begin{abstract}
We extend the de Branges-Beurling theorem characterizing the shift-invariant spaces boundedly contained in the Hardy space of square-summable power series to the full Fock space over $\C ^d$. Here, the full Fock space is identified as the \emph{Non-commutative (NC) Hardy Space} of square-summable Taylor series in several non-commuting variables. We then proceed to study lattice operations on NC kernels and operator-valued multipliers between vector-valued Fock spaces. In particular, we demonstrate that the operator-valued Fock space multipliers with common coefficient range space form a bounded general lattice modulo a natural equivalence relation.
\end{abstract}
\onehalfspace

\section{Introduction}

The classical Hardy space, $H^2$, of the complex unit disk, $\D = (\C ) _1$, is the Hilbert space of all analytic functions in the disk with square-summable Taylor series coefficients at the origin (equipped with the $\ell ^2$ inner product of these coefficients). The Hardy algebra, $H^\infty$, is the unital Banach algebra of all uniformly bounded analytic functions in $\D$ with the supremum norm, and this can be identified with the \emph{multiplier algebra} of $H^2$, the algebra of all functions in $\D$ which multiply $H^2$ into itself.  That is, if $h \in H^\infty$ and $f \in H^2$, then $h \cdot f \in H^2$. In this way any element $h \in H^\infty$, or \emph{multiplier}, defines a bounded multiplication operator $M_h :H^2 \rightarrow H^2$, $M_h f := hf$, and the operator norm of $M_h$ is equal to the supremum norm of $h$. A multiplier $h \in H^\infty$ is called \emph{inner} if the multiplication operator $M_h$ is an isometry on $H^2$. In particular $h(z) = z$ is an inner function, and the isometry $S:= M_z$, the \emph{shift}, plays a central role in operator theory on Hardy spaces \cite{Nik-shift,NF}. Beurling's theorem, for example, identifies the closed shift-invariant subspaces of $H^2$ as the ranges of inner functions - this is a celebrated and fundamental result in the classical theory \cite{Beu48,Hoff}.  This result was further extended in a natural way by de Branges: the de Branges - Beurling theorem identifies any shift-invariant space of power series boundedly contained in $H^2$ as the operator-range space of a bounded multiplier \cite{Sarason-dBBthm}. %{\color{blue} Note: all versions I can find assume that the shift restricts to an isometry on the space...}

A canonical multi-variable extension of the Hardy space $H^2$ is then the full Fock space over $\C ^d$, $\hardy$.  The Fock space can be defined as the space of all power series in $d$ non-commuting formal variables with square-summable coefficients. That is, any $f \in \bH ^2 _d$ is a formal power series:
$$ f = f (\mf{z} ) = \sum _{\alpha \in \F ^d} \hat{f} _\alpha \mf{z} ^\alpha, $$ where $\mf{z} _1 , \cdots , \mf{z} _d$ are $d-$non-commuting (NC) formal variables and $\F ^d$ is the set of all words in the $d$ letters $\{ 1, \cdots , d \}$ (including the empty word $\emptyset$ containing no letters). For example, if $\alpha = 122112$, then the NC monomial $\mf{z} ^\alpha$ is $ \mf{z}_1 \mf{z} _2 ^2 \mf{z} _1 ^2 \mf{z} _2$. In this multi-variable setting there are natural $d-$tuples of left and right \emph{NC shifts}. That is for each of the NC variables $\mf{z} _k$, the left multiplication operator $L_k := M ^L _{\mf{z} _k}$ is an isometry on $\bH ^2 _d$, and these have pairwise orthogonal ranges, 
$$ L_k ^* L_j = \delta _{k,j} I, $$ so that the $d-$tuple $L := \left( L _1 , \cdots , L_d \right) : \bH ^2 _d \otimes \C ^d \rightarrow \bH ^2 _d$ defines a \emph{row isometry}, \emph{i.e.} an isometry from several copies of a Hilbert space into itself. We call this isometry the \emph{left free shift}.  There is an exact version of the Wold decomposition in this multi-variable setting due to Popescu and the left free shift is the universal (pure) row isometry \cite{Pop-dil}. Similarly, one can also define isometric right free shifts, $R_k := M^R _{\mf{z} _k}$, and these are in fact unitarily equivalent to the left free shifts under a self-adjoint transpose unitary, $U_\dag$ on $\bH ^2 _d$. NC $H^\infty$ can then be defined as $\bH ^\infty _d := \mr{Alg} (I , L) ^{-weak-*}$, and this can be viewed as the \emph{left NC multiplier algebra} of $\bH ^2 _d$. Similarly $R^\infty _d = \mr{Alg} (I, R) ^{-weak-*}$ is the \emph{right NC multiplier algebra}. Popescu \cite{Pop-charfun,Pop-multfact} (and later Davidson-Pitts \cite{DP-inv}) have obtained an exact NC analogue of the Beurling theorem for $\bH ^2 _d$: 

\begin{thm*}{ NC Beurling Theorem (Popescu/ Davidson-Pitts)}
Any $R-$cyclic, $R-$invariant subspace of $\bH ^2 _d$ is the range of a left inner (isometric) multiplier $\Theta (L) \in \bH ^\infty _d$. Any $R-$invariant subspace of $\bH ^2 _d$ is the direct sum of $R-$invariant, $R-$cyclic subspaces.
\end{thm*}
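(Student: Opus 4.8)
The plan is to derive both assertions from Popescu's Wold decomposition for pure row isometries \cite{Pop-dil} together with the identification of the commutant of the right free shift with the left multiplier algebra $\bH ^\infty _d$ \cite{DP-inv, Pop-charfun}. First I would fix a closed $R$-invariant subspace $\cM \subseteq \bH ^2 _d$ and study the restricted tuple $R|_\cM = (R_1 |_\cM , \dots , R_d |_\cM)$. Since each $R_k$ is an isometry with the ranges $R_k \bH ^2 _d$ pairwise orthogonal, and $\cM$ is $R$-invariant, $R|_\cM$ is again a row isometry on $\cM$. The essential point is that it is pure: for the free shift one computes $\sum _{|\alpha| = n} \| (R ^\alpha) ^* f \| ^2 = \sum _{|\beta| \geq n} |\hat{f} _\beta | ^2 \to 0$, so $R$ itself is pure, and since $(R ^\alpha |_\cM) ^* = P_\cM (R ^\alpha) ^* |_\cM$ can only decrease norms, purity is inherited by the restriction. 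Popescu's Wold decomposition then yields the orthogonal expansion $\cM = \bigoplus _{\alpha \in \F ^d} R ^\alpha \mathcal{W}$, where $\mathcal{W} := \cM \ominus \bigoplus _{k=1} ^d R_k \cM$ is the wandering subspace (the finite orthogonal sum is closed as $d < \infty$), and identifies $R|_\cM$ with the right free shift of multiplicity $\dim{\mathcal{W}}$.

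Next I would pin down $R$-cyclicity in terms of this multiplicity. Expanding a vector as $v = \sum _\alpha R ^\alpha w_\alpha$ with $w_\alpha \in \mathcal{W}$, one sees that the wandering-space component $P_\mathcal{W} R ^\beta v$ vanishes for every nonempty word $\beta$ and equals $w_\emptyset$ when $\beta = \emptyset$; hence $P_\mathcal{W}$ maps $\overline{\mathrm{span}} \{ R ^\beta v : \beta \in \F ^d \}$ onto $\C w_\emptyset$. Thus a nonzero invariant subspace is $R$-cyclic if and only if $\dim{\mathcal{W}} = 1$. In that case $R|_\cM$ is a pure row isometry of multiplicity one, hence unitarily equivalent to the right free shift $R$ itself; let $V : \bH ^2 _d \to \cM$ be a unitary with $(R_k |_\cM) V = V R_k$ for each $k$. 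Composing with the inclusion $\iota : \cM \hookrightarrow \bH ^2 _d$ yields an isometry $\iota V$ on $\bH ^2 _d$ that commutes with every $R_k$, since $(\iota V) R_k = \iota (R_k|_\cM) V = R_k (\iota V)$. Invoking $\{ R_1 , \dots , R_d \} ' = \bH ^\infty _d$, we conclude $\iota V = \Theta(L)$ for some $\Theta(L) \in \bH ^\infty _d$; as $\iota V$ is isometric, $\Theta(L)$ is left inner, and its range is $\iota V \, \bH ^2 _d = \cM$. This gives the first assertion.

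For the second assertion I would fix an orthonormal basis $\{ w_i \}$ of $\mathcal{W}$ and set $\cM _i := \overline{\mathrm{span}} \{ R ^\alpha w_i : \alpha \in \F ^d \} = \bigoplus _\alpha R ^\alpha \C w_i$. The relation $\ip{R ^\alpha w_i}{R ^\beta w_j} = \delta _{\alpha , \beta} \delta _{i,j}$ shows the $\cM _i$ are mutually orthogonal, $R$-invariant, and each $R$-cyclic with one-dimensional wandering space, and that $\cM = \bigoplus _i \cM _i$; combined with the first assertion each summand is the range of a left inner multiplier. The genuinely deep ingredient, and the step I expect to be the main obstacle, is the commutant identification $\{ R_1 , \dots , R_d \} ' = \bH ^\infty _d$ used in the cyclic case; everything else is bookkeeping over the free semigroup $\F ^d$ and the Wold decomposition. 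One should also check that $V$ is well defined and bounded before appealing to the commutant theorem, but this is immediate from the orthonormality of $\{ R ^\alpha w \} _\alpha$ in the multiplicity-one case.
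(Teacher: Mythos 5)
Your proof is correct, but note that the paper does not prove this statement at all: it is quoted in the introduction as a known result of Popescu and Davidson--Pitts, with the proof deferred to \cite{Pop-charfun,Pop-multfact,DP-inv}. What you have written is essentially the standard argument from those references: (i) the restriction of $R$ to an invariant subspace is a row isometry, and purity passes to restrictions since $(R^\alpha|_{\cM})^* = P_\cM (R^\alpha)^*|_\cM$; (ii) Popescu's Wold decomposition gives $\cM = \bigoplus_\alpha R^\alpha \mathcal{W}$ with wandering space $\mathcal{W} = \cM \ominus \bigoplus_k R_k\cM$; (iii) your observation that $P_{\mathcal W}$ collapses a singly generated invariant subspace onto a one-dimensional space correctly identifies $R$-cyclicity with $\dim(\mathcal{W})=1$; (iv) in the multiplicity-one case the intertwiner $\iota V$ commutes with the right free shifts and is therefore a left multiplier by the commutant theorem, isometric hence inner. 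The only genuinely deep input is the commutant identification $\{R_1,\dots,R_d\}' = \bH^\infty_d$, which you rightly flag as imported; this is exactly the ingredient the paper itself invokes (as \cite[Theorem 1.2]{DP-inv}) in its proof of the NC de Branges--Beurling theorem, so your argument is consistent in spirit with how the paper handles the bounded-containment generalization. The decomposition of a general invariant subspace into the mutually orthogonal cyclic pieces $\cM_i = \bigoplus_\alpha R^\alpha \C w_i$ is also the standard one and is correct.
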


In this paper we study operator-valued left multipliers between vector-valued NC Hardy spaces, $\bH ^2 _d \otimes \cH$ and $\bH ^2 _d \otimes \cJ$. We will show that the set of all operator valued left multipliers, $\bH ^\infty _d \otimes \mc{L} (\cdot , \cJ)$ with range contained in $\bH ^2 _d \otimes \cJ$ can be equipped with natural lattice operations $\wedge, \vee$ so that given any $F \in \bH ^\infty _d \otimes \mc{L} (\mc{F} , \mc{J} ) $ and $G \in \bH ^\infty _d \otimes \mc{L} (\mc{G} , \mc{J} )$, both $F \vee G$ and $F \wedge G$ are operator-valued left multipliers with range contained in $\bH ^2 _d \otimes \cJ$. We further develop factorizations of $F \wedge G$ in terms of $F \vee G$ and $F \oplus G$. The operator-valued left multiplier $F \wedge G$, in particular, is defined via an operator-valued NC version of the de Branges-Beurling theorem:

\begin{thm*}[NC de Branges-Beurling Theorem (Theorem \ref{NCdBB})]
A Hilbert space $\scr{M}$ is boundedly contained in $\bH ^2 _d \otimes \cJ$ and $R-$invariant if and only if it is the operator-range space of an operator valued left multiplier $F (L) \in \bH ^\infty _d \otimes \mc{L} (\mc{F} , \mc{J} )$.
\end{thm*}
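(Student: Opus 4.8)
The plan is to prove both implications, the forward direction being routine and the converse carrying the real content. For the forward implication, suppose $\scr{M} = \ran{F(L)}$ is the operator-range space of a left multiplier $F(L) \in \bH ^\infty _d \otimes \mc{L} (\mc{F} , \cJ)$. Since left multipliers commute with the right free shift (indeed $\bH ^\infty _d$ is, up to the transpose unitary $U_\dag$, the commutant of the right shift algebra), we have $R_k F(L) = F(L) (R_k \otimes I)$ on the domain, so $\ran{F(L)}$ is $R-$invariant. Moreover $\ker{F(L)}$ is $(R \otimes I)-$invariant, whence $R$ acts on $\scr{M}$ — identified isometrically with $(\bH ^2 _d \otimes \mc{F}) \ominus \ker{F(L)}$ — as the compression of the row isometry $R \otimes I$ to a coinvariant subspace, i.e.\ as a row contraction. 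The operator-range norm makes the inclusion $\scr{M} \hookrightarrow \bH ^2 _d \otimes \cJ$ contractive, giving bounded containment.

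For the converse, the strategy is to pass to NC reproducing kernels and then factor through an NC Leech theorem. After rescaling we may assume $\scr{M}$ is contractively contained. I would first record that $\scr{M}$ carries a positive $\mc{L}(\cJ)-$valued NC reproducing kernel $K ^{\scr{M}}$, and that contractive containment in $\bH ^2 _d \otimes \cJ$ is equivalent to the NC kernel difference $K ^{\cJ} - K ^{\scr{M}}$ being a positive NC kernel, where $K ^{\cJ}$ is the NC Szeg\H{o} kernel of $\bH ^2 _d$ amplified by $\cJ$. The next step is to translate $R-$invariance of $\scr{M}$ into structure on $K ^{\scr{M}}$: writing $T_k := R_k |_{\scr{M}}$ for the (bounded, via the closed graph theorem) restriction of the right shift and $\iota$ for the contractive inclusion, the intertwining relation $R_k \iota = \iota T_k$ dualizes to a covariance of $K ^{\scr{M}}$ under the coefficients of $R$. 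I expect this to produce exactly the hypothesis needed to recognize $K ^{\scr{M}}$ as a multiplier-range kernel, namely that the assignment sending the reproducing-kernel data of $\scr{M}$ to its $R-$shifted data is a contraction, equivalently that a positive NC kernel identity of Gram type holds.

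The heart of the argument — and the main obstacle — is the factorization $K ^{\scr{M}} = F \, K ^{\text{Fock}} \, F^*$ through a contractive left multiplier $F(L) \in \bH ^\infty _d \otimes \mc{L} (\mc{F} , \cJ)$, the operator-valued NC analogue of the Leech theorem. I would prove this by a lurking-isometry construction: the positive NC kernel identity from the previous step defines an isometry $V$ from a space spanned by kernel vectors (tagged with coefficient data) onto the corresponding $R-$shifted space; extending $V$ to a coisometric (or unitary) colligation on an enlarged space and reading off its transfer function yields $F$, with the coefficient domain $\mc{F}$ emerging as the internal/defect space of the colligation. One then checks that $F$ realizes $\scr{M}$ isometrically as its operator-range space by matching reproducing kernels. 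The delicate point is to verify that the formal transfer function is genuinely a weak-$*$ element of $\bH ^\infty _d \otimes \mc{L} (\mc{F} , \cJ)$, rather than a mere formal power series; this is where the Popescu--Wold structure of the row isometry $R$ and the universality of the pure row isometry $L$ must be invoked. Finally, the NC Beurling theorem of Popescu and Davidson--Pitts is recovered as the special case in which $\scr{M}$ is a closed $R-$invariant subspace, where the colligation is unitary and $F$ is forced to be inner.
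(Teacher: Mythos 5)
Your forward direction is correct and agrees with the paper's: range invariance follows from the intertwining $R_k\otimes I_\J\, F(L)=F(L)\,(R_k\otimes I_{\mc{F}})$, and row-contractivity of $X_F:=(R\otimes I_\J)|_{\ran{F(L)}}$ follows because, under the operator-range identification with $\ker{F(L)}^\perp$, it is the compression of the row isometry $R\otimes I_{\mc{F}}$ to a coinvariant subspace. For the converse you take a genuinely different route: the paper argues by dilation theory --- Lemma~\ref{pure} shows $X:=(R\otimes I_\J)|_{\scr{M}}$ is a \emph{pure} row contraction, Popescu's theorem identifies its minimal row isometric dilation with $R\otimes I_{\mc{F}}$ via a unitary $U$, and $F:=\mr{e}\,P_{\scr{M}}U$ is shown to intertwine the right free shifts and hence to be a left multiplier by Davidson--Pitts --- whereas you propose a kernel-theoretic realization (lurking isometry / NC Leech). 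That route is viable in outline, but as written it has concrete gaps.

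First, you nowhere establish purity of $X$. This is not a technicality: it is exactly the content of Lemma~\ref{pure} (bounded containment in $\bH^2_d\otimes\J$ forces $\|(X^*)^{[n]}g\|\to 0$ because $R\otimes I_\J$ is pure and $\mr{e}^*$ has dense range), and it is indispensable on your route as well. Without it, iterating your Gram-type identity leaves a residual term governed by $\lim_n (X^*)^{[n]}$, and the colligation construction yields only an inequality between $K^{\scr{M}}$ and $F(Z)\,(K(Z,W)\otimes I_{\mc{F}})\,F(W)^*$ rather than equality; equivalently, the dilation of $X$ could contain a Cuntz part and $\scr{M}$ would not be exhausted by a multiplier range. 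Second, you assert that $T_k=R_k|_{\scr{M}}$ is ``bounded via the closed graph theorem'' and expect the needed positivity to follow; but boundedness does not give row \emph{contractivity}. That is an independent hypothesis of the theorem (it appears explicitly in Theorem~\ref{NCdBB}, though it is elided in the introduction's summary, and a bounded $R$-invariant renorming of $\bH^2_d$ shows it is not automatic); your ``positive NC kernel identity of Gram type'' \emph{is} that hypothesis, not a consequence of bounded containment. Third, the heart of your argument --- producing a \emph{left} multiplier from \emph{right}-shift covariance of the kernel, and verifying that the formal transfer function is a genuine element of $\bH^\infty_d\otimes\mc{L}(\mc{F},\J)$ --- is deferred to an unproved NC Leech theorem; this left/right asymmetry is precisely what the paper sidesteps by quoting the Davidson--Pitts intertwining characterization of $\bH^\infty_d$. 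Until these three points are supplied, the converse remains a plan rather than a proof.
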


As a consequence of this theorem we obtain a modest generalization of a result of Davidson and Pitts on right ideals in $\bH^{\infty}_d$. In Sections \ref{sec:lattice_kernels} and \ref{sec:lattice_mult} we discuss lattice operations on kernels and multipliers. The classical analog of these results are the greatest common divisor (gcd) and least common multiple (lcm) of inner functions. We show that modulo a certain natural equivalence relation, the multipliers form a bounded general lattice with operations of join and meet. The last section provides a different viewpoint on this lattice. Namely, the final section describes these lattice operations in terms of the category of right Hilbert modules over the free algebra.

\section{Preliminaries: NC function theory and NC reproducing kernels}

The Hardy space $H^2$ of the disk can be equivalently defined as the \emph{reproducing kernel Hilbert space} (RKHS) of the positive \emph{Szeg\"{o} kernel}, $k : \D \times \D \rightarrow \C$: 
$$ k(z,w) := \frac{1}{1-zw^*}; \quad \quad z,w \in \D, $$ $H^2 (\D) = \cH (k)$. Here, recall that a RKHS, $\cH$, is any Hilbert space of functions on some set $X$ (say, a Hausdorff topological space) so that point evaluation at any $x \in X$ is a bounded linear functional. By the Riesz Lemma there is then a point evaluation vector, or \emph{kernel} at $x$, $k_x \in \cH$ so that 
$ \ip{k_x}{f} = f(x), $ for any $f \in \cH$. The two-variable function $k(x,y) := \ip{k_x}{k_y}_\cH$ is then a \emph{positive kernel function on $X$} in the sense that for any finite subset of $X$, the Gram matrix $[ k(x_i , x_j ) ] \geq 0$ is positive semi-definite. One then writes $\cH =\cH (k)$, and the classical theory of reproducing kernels due to Aronszajn and Moore shows there is a natural bijection between positive kernels on a given set $X$, and RKHS of functions on that set.

In this paper, the operator-range spaces of NC left or right multipliers can be viewed as \emph{non-commutative reproducing kernel Hilbert spaces} (NC-RKHS) of \emph{free non-commutative functions} defined on \emph{non-commutative sets}. Our use of the theory of NC functions and NC-RKHS is mostly superficial, and so we will provide a breviloquent introduction to these concepts and leave details to expert references.

Most NC-RKHS in this paper will be Hilbert spaces of NC functions defined in the \emph{NC open unit row-ball}:
$$ \B ^d _\N := \bigsqcup _{n=1} ^\infty \B ^d _n; \quad \B ^d _n := \left( \C ^{n \times n} \otimes \C ^{1\times d} \right) _1. $$ That is, each level $\B ^d _n$ is the set of all \emph{strict row contractions on $\C ^n$}, any $Z \in \B ^d _n$ is a $d-$tuple $Z = \left( Z_1 , \cdots , Z_d \right)$ of $n\times n$ matrices, $Z_k \in \C ^{n\times n}$ so that $Z : \C ^{n} \otimes \C ^d \rightarrow \C ^n$ defines a strict contraction from $d$ copies of $\C ^n$ into one copy:
$$ Z Z ^* = Z_1 Z_1 ^* + \cdots Z_d Z_d ^* < I_n. $$ 
In brief, a \emph{free NC function} on $\B ^d _\N$ is any function $f : \B ^d _\N \rightarrow \bigsqcup \C ^{n\times n}$ which respects the grading, joint similarities and direct sums (it is easy to check any NC polynomial has these properties). Locally (level-wise) bounded free NC functions in $\B ^d _\N$ are automatically analytic \cite[Chapter 7]{KVV}. Generally speaking, an NC set, $\Om$, is any subset of the \emph{NC universe}, $\C ^d _\N := \bigsqcup _{n=1} ^\infty \C ^{n\times n} \otimes \C ^{1 \times d}$ which is closed under direct sums. One writes $\Om := \bigsqcup \Om _n$ where $\Om _n := \Om \cap \C ^{n\times n} \otimes \C ^{1\times d}$. An NC-RKHS on $\Om$ is any Hilbert space, $\cH$ of free NC functions in $\Om$ so that point evaluation at any $Z \in \Om _n$, $\ell _Z$, is a bounded linear map from $\cH$ into the Hilbert space $\left( \C ^{n\times n} , \mr{tr} _n \right)$. The Hilbert space adjoint of this linear map is the kernel map at $Z$,  
$$ K_Z := \ell _Z ^* : \C ^{n\times n} \rightarrow \cH, $$ and the \emph{completely positive non-commutative (CPNC) kernel} of $\cH$ is defined as follows: Given any $Z \in \Om _n , \ W \in \Om _m$, $K(Z,W) [ \cdot ] : \C ^{n\times m} \rightarrow \C ^{n \times m}$ is given by:
$$ \ipcn{y}{K(Z,W) [vu^*] x} := \ip{K_Z (yv^*)}{K_W (xu^*)}_\cH; \quad y, v \in \C ^n, \ x,u \in \C ^m. $$ (We will typically write $K\{Z, y ,v \} := K _Z (y v^* )$.) For any $Z \in \Om _n$, $K(Z,Z) [ \cdot ]$ is a completely positive map. The theory of NC-RKHS is a faithful analogue of the classical RKHS theory and there is again a bijection between CPNC kernels and NC-RKHS of free NC functions on NC sets \cite{BMV}. In particular the full Fock space, $\bH ^2 _d$ can be identified with the \emph{NC Hardy space}, the NC-RKHS of all free NC functions in $\Om = \B ^d _\N$ corresponding to the \emph{NC Szeg\"{o} kernel}:
$$ K(Z,W) := \sum _{\alpha \in \F ^d} Z^\alpha [ \cdot ] (W^* ) ^\alpha. $$ As mentioned in the introduction, $\bH ^\infty _d := \mr{Alg} (I , L) ^{-WOT}$ can then be identified with the \emph{left multiplier algebra} of $\bH ^2 _d$. Namely, given any $h \in \bH ^2 _d$ and $H \in \bH ^\infty _d$, $H \cdot h \in \bH ^2 _d$, and the linear multiplication operator, 
$H(L) := M^L _H : \bH ^2 _d \rightarrow \bH ^2 _d$, $( H(L) h) (Z) = H (Z) h (Z)$ is bounded with operator norm equal to the supremum norm of its symbol, $H$, over the NC unit ball \cite[Theorem 3.1]{SSS}. 

\subsection{Row contractions and their row isometric dilations}

\label{minimal}

Let $X$ be a row contraction on a Hilbert space, $\mc{H}$.  We say that a row isometry, $\hat{X}$ on $\hat{\mc{H}} \supseteq \mc{H}$ is a row isometric dilation of $X$, if compression to $\mc{H}$ is a unital homomorphism of $\mr{Alg} (I , \hat{X} )$ onto $\mr{Alg} (I, X)$:
$$ P_\mc{H} \hat{X} ^\alpha P_{\mc{H}} = X ^\alpha P _{\mc{H}}; \quad \quad \alpha \in \F ^d. $$ 
Such a dilation, $\hat{X}$, is called \emph{minimal} if $\mc{H}$ is $\hat{X}-$cyclic, \emph{i.e.} 
$$ \hat{\mc{H}} = \bigvee _{\alpha \in \F ^d} \hat{X} ^\alpha \mc{H}. $$  Given any row isometric dilation $(\hat{X}, \hat{\mc{H}} )$ of $X$, observe that if one defines
$$ \hat{\mc{H}}_0 := \bigvee _{\alpha \in \F ^d} \hat{X} ^\alpha \mc{H}, \quad \mbox{and} \quad 
\hat{X} _0 := \hat{X} | _{\hat{\mc{H}} _0 } \otimes \C ^d, $$ that $( \hat{X} _0, \hat{\mc{H}} _0 )$ is a minimal row isometric dilation of $X$. Two row isometric dilations of $X$, $(\hat{X}, \hat{\mc{H}})$, $(\hat{X} ' , \hat{\mc{H}} ' )$ are said to be \emph{equivalent}, if there is an onto isometry $U : \hat{\mc{H}} \rightarrow \hat{\mc{H}} '$ so that $Uh =h$ for all $h \in \H \subseteq \hat{\mc{H}}, \hat{\mc{H}} ' $, and 
$$ U \hat{X} ^\alpha = (\hat{X} ') ^\alpha U. $$ 

As proven in \cite[Theorem 2.1]{Pop-dil}, any row contraction, $X$ on $\mc{H}$ has a minimal row isometric dilation $(\hat{X}, \hat{\mc{H}} )$ which obeys the property that $\mc{H}$ is $\hat{X}-$coinvariant, and,
$$ \hat{X} ^* | _{\mc{H}} = X ^*. $$ Moreover, any minimal row isometric dilation of $X$ with these properties is unique up to the above notion of equivalence. 

\begin{lemma} \label{minunique}
Let $X$ be a row contraction on the Hilbert space $\mc{H}$. Then any two minimal row isometric dilations $(\hat{X} , \hat{\mc{H}})$ and $(\hat{X} ' , \hat{\mc{H}} ' )$ of $X$ are equivalent and $\mc{H}$ is co-invariant for any minimal row isometric dilation of $X$.
\end{lemma}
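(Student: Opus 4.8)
The plan is to prove both claims simultaneously by showing that, for $h,g \in \mc{H}$ and words $\alpha,\beta \in \F^d$, the inner product $\ip{\hat{X}^\alpha h}{\hat{X}^\beta g}$ is completely determined by the row contraction $X$ and the inner product on $\mc{H}$, with no dependence on the particular minimal dilation. Granting this, the equivalence of two minimal dilations $(\hat{X},\hat{\mc{H}})$ and $(\hat{X}',\hat{\mc{H}}')$ follows by matching them on their spanning sets, which are dense by minimality; co-invariance then follows by transporting the co-invariant dilation supplied by Popescu's theorem across the resulting intertwining unitary.

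First I would expand $\ip{\hat{X}^\alpha h}{\hat{X}^\beta g} = \ip{h}{(\hat{X}^\alpha)^* \hat{X}^\beta g}$. Since $\hat{X}$ is a row isometry we have $\hat{X}_i^* \hat{X}_j = \delta_{i,j} I$, and cancelling the innermost factors of $(\hat{X}^\alpha)^* \hat{X}^\beta$ collapses the product, according to the combinatorics of $\alpha$ and $\beta$, to exactly one of three outcomes: $\hat{X}^\gamma$ when $\beta = \alpha\gamma$, $(\hat{X}^\gamma)^*$ when $\alpha = \beta\gamma$, and $0$ when neither word is an initial segment of the other (the boundary case $\gamma = \emptyset$ giving $I$). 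In the first case, using $h,g \in \mc{H}$ together with the compression-homomorphism property $P_\mc{H}\hat{X}^\gamma P_\mc{H} = X^\gamma P_\mc{H}$ built into the definition of a dilation, $\ip{h}{\hat{X}^\gamma g} = \ip{h}{P_\mc{H}\hat{X}^\gamma P_\mc{H} g} = \ip{h}{X^\gamma g}$; the second case reduces symmetrically to $\ip{X^\gamma h}{g}$; and the third gives $0$. In every case the value is expressed through $X$ alone.

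With the Gram matrices in hand, given two minimal dilations I would define $U$ on the dense subspace $\mathrm{span}\{\hat{X}^\alpha h \mid \alpha \in \F^d,\ h \in \mc{H}\}$ of $\hat{\mc{H}}$ by $U\,\hat{X}^\alpha h := (\hat{X}')^\alpha h$. The identity from the previous step shows at once that $U$ is well-defined (a vanishing combination on the left has vanishing image) and isometric, so it extends to an isometry $U : \hat{\mc{H}} \to \hat{\mc{H}}'$. Its range contains $(\hat{X}')^\alpha h$ for all $\alpha,h$, hence is dense and, $U$ being isometric, all of $\hat{\mc{H}}'$; so $U$ is unitary. Taking $\alpha = \emptyset$ gives $Uh = h$ for $h \in \mc{H}$, and $U\hat{X}^\beta(\hat{X}^\alpha h) = (\hat{X}')^{\beta\alpha}h = (\hat{X}')^\beta U(\hat{X}^\alpha h)$ gives $U\hat{X}^\beta = (\hat{X}')^\beta U$, which is precisely the required equivalence.

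Finally, for co-invariance, let $(\hat{X}_c,\hat{\mc{H}}_c)$ be the co-invariant minimal dilation furnished by Popescu's theorem and let $U : \hat{\mc{H}}_c \to \hat{\mc{H}}$ be the equivalence just built, so that $\hat{X}_i = U\hat{X}_{c,i}U^*$ and $U$ fixes $\mc{H}$ (whence also $U^* h = h$ for $h \in \mc{H}$, since $U$ is unitary). Then for $h \in \mc{H}$ we get $\hat{X}_i^* h = U\hat{X}_{c,i}^* U^* h = U\hat{X}_{c,i}^* h$, and $\hat{X}_{c,i}^* h \in \mc{H}$ by co-invariance of $\hat{X}_c$, so $U$ fixes it and $\hat{X}_i^* h = \hat{X}_{c,i}^* h \in \mc{H}$; thus $\mc{H}$ is co-invariant, with $\hat{X}^*|_\mc{H} = X^*$, for the arbitrary minimal dilation $\hat{X}$. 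I expect the only genuinely subtle step to be the first one: recognizing that the row-isometry relations together with the compression-homomorphism property --- and not any co-invariance assumption --- already determine every inner product $\ip{\hat{X}^\alpha h}{\hat{X}^\beta g}$. The word bookkeeping in reducing $(\hat{X}^\alpha)^*\hat{X}^\beta$ must be done carefully, but after that the construction of $U$ is the routine ``equal Gram matrices yield a unitary'' argument.
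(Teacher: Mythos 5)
Your proposal is correct and follows essentially the same route as the paper: the paper defines the same map $U\hat{X}^\alpha h = (\hat{X}')^\alpha h$ and cites the Sz.-Nagy--Foias argument for why it is a well-defined onto isometry, which is exactly the Gram-matrix computation you carry out in detail. Your derivation of co-invariance by transporting Popescu's co-invariant minimal dilation across $U$ is likewise the intended completion of the paper's sketch.
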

This is easily established as in the proof of \cite[Chapter 4,Theorem 4.1]{NF}. Namely, given any two minimal isometric dilations, $\hat{X}, \wt{X}$ of $X$, one verifies that the linear map
$U :\hat{\mc{H}} \rightarrow \hat{\mc{H}} '$ defined by
$$ U \hat{X} ^\alpha h = (\hat{X} ') ^\alpha h; \quad \quad h \in \mc{H}, \ \alpha \in \F^d, $$ defines an onto isometry between the Hilbert spaces of the two minimal dilations.

A row contraction, $T = (T _1 , \cdots , T_d ) : \cH \otimes \C ^d \rightarrow \cH$ is said to be \emph{pure} (or of class $C _{\cdot \ 0}$) if:
$$ \lim _{n \rightarrow \infty } \sum _{|\alpha | = n} \| T^{*\alpha} h \| = 0. $$ 
By \cite[Proposition 2.3]{Pop-dil}, $T$ is pure if and only if its minimal row isometric dilation is unitarily equivalent to copies of $L$, or equivalently to copies of $R$. Here, note that $R_k$ and $L_k$ are unitarily equivalent via the idempotent \emph{transpose unitary} on $\bH ^2 _d$:
$$ U_\dag L^\alpha 1 = L^{\alpha ^\dag} 1, $$ where if $\alpha = i _1 \cdots i _n$, then $\alpha ^\dag = i _n \cdots i _1$. That is, $R_k = U_\dag L_k U_\dag$ for $1 \leq k \leq d$.

\section{Range containment of left multipliers}

\begin{thm}{ (NC Douglas Factorization property \cite[Theorem 5.5]{JM-freeSmirnov})}
Let $F(L) \in \bH ^\infty _d \otimes \scr{L} (\mc{F}, \J)$ and $G(L) \in \bH ^\infty _d \otimes \scr{L} (\mc{G} , \J )$ be left free multipliers so that $\ran{F(L)} \subseteq \ran{G(L)}$. There is a unique left free multiplier
$H \in \bH ^\infty _d (\mc{F} , \mc{G} )$ so that $F = G H $, $\ker{H(L)} \subseteq \ker{F(L) }$ and
$$ \| H(L) \| ^2 = \inf \{ \la ^2 \geq 0 | \ F(L)  F(L) ^* \leq \la ^2 G(L) G(L) ^* \}. $$ 
\end{thm}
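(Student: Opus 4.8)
The plan is to combine the classical Douglas range-inclusion lemma with Popescu's noncommutative commutant lifting theorem; the only genuinely noncommutative ingredient is the promotion of the abstract Douglas factor to an honest left multiplier. First I would apply the classical Douglas lemma to the bounded operators $F(L)$ and $G(L)$. The hypothesis $\ran{F(L)} \subseteq \ran{G(L)}$ is equivalent to the existence of some $\la \geq 0$ with $F(L) F(L)^* \leq \la^2 G(L) G(L)^*$, and Douglas' lemma then produces a unique bounded operator $C : \bH^2_d \otimes \mc{F} \to \bH^2_d \otimes \mc{G}$ with $F(L) = G(L) C$, with range inside $\mc{M} := \overline{\ran{G(L)^*}}$, with $\ker{C} = \ker{F(L)}$, and with $\| C \|^2 = \inf \{ \la^2 \geq 0 \mid F(L) F(L)^* \leq \la^2 G(L) G(L)^* \} =: \la_0^2$. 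One half of the asserted norm identity is then free: for any left multiplier $H$ with $F = GH$ we have $F(L) F(L)^* = G(L) H(L) H(L)^* G(L)^* \leq \| H(L) \|^2 G(L) G(L)^*$, so $\la_0^2 \leq \| H(L) \|^2$.

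Next I would record the key structural fact about $C$. Since $G(L)$ intertwines the right free shifts, $\ker{G(L)}$ is invariant for each $R_k \otimes I_{\mc{G}}$, hence $\mc{M} = \ker{G(L)}^\perp$ is coinvariant and $G(L) = G(L) P_{\mc{M}}$. Writing $T_k := P_{\mc{M}} (R_k \otimes I_{\mc{G}})|_{\mc{M}}$ for the compressed right shift on $\mc{M}$, a short computation using $G(L) = G(L) P_{\mc{M}}$, the intertwining relations for $F(L), G(L)$, and the injectivity of $G(L)|_{\mc{M}}$ shows that $C$ intertwines the right shift on $\bH^2_d \otimes \mc{F}$ with the compression: $T_k C = C (R_k \otimes I_{\mc{F}})$. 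Thus $C$ is an intertwiner into a compressed (model) shift, but it is not yet a left multiplier, which would require intertwining the full shift $R_k \otimes I_{\mc{G}}$.

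Bridging this gap is the main obstacle. The naive attempt --- to argue $C$ already intertwines the full shift because the defect $\Delta_k := (R_k \otimes I_{\mc{G}}) C - C (R_k \otimes I_{\mc{F}})$ satisfies $G(L) \Delta_k = 0$, hence $\ran{\Delta_k} \subseteq \ker{G(L)}$ --- stalls, because $\ker{G(L)}$ is invariant but not coinvariant for the right shift, so $\mc{M}$ is not shift-invariant and $\Delta_k$ cannot be projected away. In the scalar classical case this gap is invisible, as $M_g$ is injective and $\Delta_k = 0$ at once; here $G(L)$ genuinely has a kernel. I would instead invoke Popescu's noncommutative commutant lifting theorem: the pure row isometry $R \otimes I_{\mc{G}}$ is a row isometric dilation of the compression $T$, and the intertwiner $C$ therefore lifts to an operator $H(L)$ intertwining $R \otimes I_{\mc{F}}$ with $R \otimes I_{\mc{G}}$, with $P_{\mc{M}} H(L) = C$ and $\| H(L) \| = \| C \| = \la_0$. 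By the identification of left multipliers with the intertwiners of the right free shifts (Davidson--Pitts, Popescu), $H \in \bH^\infty_d \otimes \scr{L}(\mc{F}, \mc{G})$ is a left multiplier, and $G(L) H(L) = G(L) P_{\mc{M}} H(L) = G(L) C = F(L)$, again using $G(L) = G(L) P_{\mc{M}}$.

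Finally, the norm formula follows by combining $\| H(L) \| = \la_0$ with the trivial lower bound above, giving $\| H(L) \|^2 = \la_0^2$. For uniqueness I would track the kernel/range data through the construction: every solution $H'$ of $F = G H'$ satisfies $P_{\mc{M}} H'(L) = C$, since $G(L)(H'(L) - C) = 0$ forces the difference into $\ker{G(L)} = \mc{M}^\perp$, so the $\mc{M}$-component is rigid; the normalization $\ker{H(L)} \subseteq \ker{F(L)}$ together with the minimality $\| H(L) \|^2 = \la_0^2$ then singles out the lift, exactly as in the uniqueness clause of the commutant lifting theorem. The containment $\ker{H(L)} \subseteq \ker{F(L)}$ itself is immediate, since $H(L) x = 0$ gives $F(L) x = G(L) H(L) x = 0$.
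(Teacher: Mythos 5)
Your proposal is correct and follows essentially the same route as the paper: apply the classical Douglas lemma to get the abstract factor $C$ with range in $\ker{G(L)}^\perp$, observe that this subspace is coinvariant for $R\otimes I_{\mc{G}}$ so that the compressed shift is a (pure) row contraction dilated by $R\otimes I_{\mc{G}}$, verify the intertwining relation, and invoke Popescu's commutant lifting plus the identification of intertwiners of the right free shifts with left multipliers. The only cosmetic difference is that the paper also compresses the domain side to $\ker{F(L)}^\perp$ before lifting, whereas you keep the full shift on $\bH^2_d\otimes\mc{F}$; both are fine.
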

Observe that if $G(L)$ is injective, then $\ker{H(L)} = \ker{F(L)}$.
\begin{proof}
By the Douglas Factorization Lemma, \cite{DFL}, since $\ran{F(L)} \subseteq \ran{G(L)}$, there is a unique $H \in \mc{L} (\bH ^2 _d \otimes \mc{F} , \bH ^2 _d \otimes \mc{G} )$, so that 
$$ F(L) = G(L) H, $$ and so that $H$ is uniquely determined by the properties that 
$$ \| H \| ^2 = \inf \{ \la ^2 \geq 0 | \ F(L)  F(L) ^* \leq \la ^2 G(L) G(L) ^* \}, $$
$\ker{H} = \ker{F(L)}$, and $\ran{H} \subseteq \ker{G(L)} ^\perp$. Note that $\ker{G(L)}$ is closed and $R\otimes I_\mc{G}-$invariant. Setting $P := P_{\ker{G}} ^\perp$, this is a $R-$co-invariant projection, and let $T := P R \otimes I_\mc{G} | _{\ker{G} ^\perp}$. This is a pure row contraction with row isometric dilation $R \otimes I_{\mc{G}}$. Similarly, let $P'$ be the $R-$coinvariant projection $P_{\ker{F} } ^\perp$, and $T' := P ' R \otimes I_{\mc{F}} | _{\ker{F} ^\perp}$. Since $F(L)$ and $G(L)$ intertwine right free shifts, observe that:
\ba R _k  \otimes I _{\mc{J}} F(L) & = & G(L) R_k \otimes I _{\mc{G} } H \nn \\
& = & F(L) R_k \otimes I _{\mc{F}}, \nn \ea and it follows that 
\ba T_k H & = & P R_k \otimes I_{\mc{G}} H \nn \\
& = & H P ' R_k \otimes I_{\mc{F}} \nn \\
& = & H T' _k. \nn \ea 
By commutant lifting \cite{Pop-dil}, it follows that there is a $H(L) \in \bH ^\infty _d \otimes \mc{L} ( \mc{F} , \mc{G} )$ with $\| H(L) \| = \| H \|$, so that $H(L) ^* | _{\ker{G} ^\perp} = H^*$. In particular $P H(L) = P H(L) P = P H P = H P$, and 
$$ F(L) = G(L) H = G(L) P H P = G(L) H(L). $$ 
\end{proof}

\section{A de Branges-Beurling theorem for Fock space}

Consider $F \in \bH ^\infty _d  \otimes \L ( \mc{F} , \mc{J} )$.  Let $\scr{M} ^L  (F) \subseteq \bH ^2 _d \otimes \J$ be the operator range space:
$$ \scr{M} ^L  (F) =\scr{M} (F(L)) := \ran{F(L)}; \quad \quad \| F(L) x \| _{F } := \| P_{\ker{F(L)}} ^\perp x \|  _{\bH ^2 _d}. $$ That is the norm of $\scr{M} ^L  (F)$ is defined so that $F(L) $ is a co-isometry from $\bH ^2 _d \otimes \mc{F}$ onto its range space, and $$ \scr{M} ^L (F) \underset{\mbox{\tiny \mbox{\tiny b.c.}}}{\subseteq} \bH ^2 _d \otimes \J. $$ Here the notations $\bc $ or $\cc $ will denote bounded/ contractive containment, respectively. The Hilbert space $\scr{M} ^L (F)$ is the NC-RKHS with the CPNC kernel:
$$ K^F (Z,W) [ \cdot ] := F(Z) K(Z,W) [\cdot ] \otimes I_{\mc{F}} F(W) ^*, $$ and $K(Z,W)$ is, as before, the NC Szeg\"o kernel of the full Fock space $\bH ^2 _d$. Observe that any left multiplier range space, $\scr{M} ^L (F)$ is $R \otimes I_\J -$ invariant.

\begin{lemma} \label{rowcontract}
Given any $F \in \bH ^\infty _d \otimes \mc{L} (\mc{F}, \J ) $, let $X_F := (R \otimes I_\J ) | _{\scr{M} ^L (F)} $. Then $X_F$ is a row contraction, and $(X_F ^\alpha ) ^*  F(L) x  = F(L) (R^\alpha \otimes I _{\mc{F}} ) ^* P _{\ker{F}} ^\perp x $
\end{lemma}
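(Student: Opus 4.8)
The plan is to establish the intertwining identity first and then read off the row-contraction property from its single-letter case. Throughout, write $P := P_{\ker{F(L)}} ^\perp$ for the orthogonal projection of $\bH ^2 _d \otimes \mc{F}$ onto $\ker{F(L)} ^\perp$. The first fact I would record is that $\ker{F(L)}$ is $R \otimes I_{\mc{F}}-$invariant: since $F(L)$ is a left multiplier it commutes with each right shift, so $F(L) g = 0$ forces $F(L) (R_k \otimes I_{\mc{F}}) g = (R_k \otimes I_\J ) F(L) g = 0$. Hence $\ker{F(L)} ^\perp = \ran{P}$ is co-invariant for $R \otimes I_{\mc{F}}$, \emph{i.e.} invariant under each $(R_k \otimes I_{\mc{F}}) ^*$. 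The second ingredient is the range-space norm itself: by construction (and polarization) $\ip{F(L) a}{F(L) b} _F = \ip{Pa}{Pb} _{\bH ^2 _d \otimes \mc{F}}$, so $F(L) | _{\ran{P}}$ is an onto isometry of $\ran{P}$ onto $\scr{M} ^L (F)$, and every element of $\scr{M} ^L (F)$ has the form $F(L) y$.

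For the intertwining identity I would verify it weakly, testing against an arbitrary $F(L) y \in \scr{M} ^L (F)$. Using that $X_F ^\alpha F(L) y = (R^\alpha \otimes I_\J ) F(L) y = F(L) (R^\alpha \otimes I_{\mc{F}} ) y$ together with the definition of $\ip{\cdot}{\cdot} _F$, the left-hand side becomes
$$ \ip{(X_F ^\alpha ) ^* F(L) x}{F(L) y} _F = \ip{F(L) x}{F(L) (R^\alpha \otimes I_{\mc{F}} ) y} _F = \ip{Px}{(R^\alpha \otimes I_{\mc{F}} ) y} = \ip{(R^\alpha \otimes I_{\mc{F}} ) ^* Px}{y}, $$
where I used $PPx = Px$. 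For the candidate right-hand side, co-invariance of $\ran{P}$ gives $(R^\alpha \otimes I_{\mc{F}} ) ^* Px \in \ran{P}$, so $\ip{F(L) (R^\alpha \otimes I_{\mc{F}} ) ^* Px}{F(L) y} _F = \ip{(R^\alpha \otimes I_{\mc{F}} ) ^* Px}{Py} = \ip{(R^\alpha \otimes I_{\mc{F}} ) ^* Px}{y}$. The two inner products agree for every $y$, and since such $F(L) y$ exhaust $\scr{M} ^L (F)$, the identity $(X_F ^\alpha ) ^* F(L) x = F(L) (R^\alpha \otimes I_{\mc{F}} ) ^* Px$ follows.

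The row-contraction claim then drops out of the case $|\alpha | = 1$. Applying the identity componentwise and using the isometry property of the norm,
$$ \sum _{k=1} ^d \| (X_F) _k ^* F(L) x \| _F ^2 = \sum _{k=1} ^d \| (R_k \otimes I_{\mc{F}} ) ^* Px \| ^2 = \| (R \otimes I_{\mc{F}} ) ^* Px \| ^2 \leq \| Px \| ^2 = \| F(L) x \| _F ^2 , $$
the inequality holding because $R \otimes I_{\mc{F}}$ is a row isometry and hence $(R \otimes I_{\mc{F}}) ^*$ is a row contraction. As every element of $\scr{M} ^L (F)$ is of the form $F(L) x$, this yields $\sum _k (X_F) _k (X_F) _k ^* \leq I$ on $\scr{M} ^L (F)$, \emph{i.e.} $X_F$ is a row contraction.

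I expect the only genuinely delicate point to be the bookkeeping between the two inner products: the adjoint $(X_F) ^*$ must be computed in the range-space inner product $\ip{\cdot}{\cdot} _F$ rather than in the ambient one on $\bH ^2 _d \otimes \J$, and it is precisely the co-invariance of $\ker{F(L)} ^\perp$ that allows me to push the ambient shift-adjoint back through $F(L)$ without disturbing the projection $P$. Everything else is a routine consequence of $F(L)$ intertwining the right shifts and acting as a co-isometry onto its range in the range-space norm.
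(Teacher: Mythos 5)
Your proof is correct, and the adjoint-action computation is essentially the paper's own: both arguments test $(X_F^\alpha)^* F(L)x$ weakly against an arbitrary $F(L)y$, use that $F(L)$ is a co-isometry onto $\scr{M}^L(F)$ in the range norm, and invoke co-invariance of $\ker{F(L)}^\perp$ under $(R\otimes I_{\mc{F}})^*$ to push the shift-adjoint back through $F(L)$. Where you diverge is in the row-contraction claim: the paper proves it first and independently, by the direct forward estimate
\begin{equation*}
\| X_F (F(L)\otimes I_d)\mbf{x} \|_F^2 = \| P (R\otimes I_{\mc{F}}) (P\otimes I_d) \mbf{x} \|^2 \leq \| (P\otimes I_d)\mbf{x} \|^2 = \| (F(L)\otimes I_d)\mbf{x} \|_{F\otimes \C^d}^2,
\end{equation*}
which uses $R\otimes I_{\mc{F}}$-invariance of $\ker{F(L)}$ and the fact that $R\otimes I_{\mc{F}}$ is a row isometry; you instead derive it afterwards from the single-letter case of the adjoint identity, via $\sum_k \|(X_F)_k^* F(L)x\|_F^2 = \|(R\otimes I_{\mc{F}})^* Px\|^2 \leq \|Px\|^2$, i.e.\ $X_F X_F^* \leq I$. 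These are dual estimates (invariance of the kernel versus co-invariance of its complement, isometry of $R$ versus contractivity of $R^*$) and both are valid; your ordering is slightly more economical since the contraction property falls out of the identity you need anyway. The one point worth making explicit is that writing $(X_F^\alpha)^*$ presupposes $X_F^\alpha$ is bounded on $\scr{M}^L(F)$; this is harmless here because your weak identity exhibits an everywhere-defined adjoint (so boundedness follows by the closed graph theorem), or alternatively because the componentwise version of the paper's forward estimate gives boundedness of each $(X_F)_k$ in one line --- but as written your proof leans on the adjoint's existence before justifying it.
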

\begin{proof}
Clearly $\scr{M} ^L (F)$ is $R \otimes I_\J -$ invariant since 
$$ R \otimes I_\J F(L) x = F(L) (R \otimes I_\mc{F} ) x \in \ran{F(L)}. $$
For any $\mbf{x}$ in $(\bH ^2 _d \otimes \mc{F}) \otimes \C^d$,
\ba \| X_F F(L) \otimes I_d \mbf{x} \| ^2 _F & = & \| (F(L) R \otimes I_{\mc{F}} ) \mbf{x} \| _F ^2 \nn \\ 
& = & \| P_{\ker{F} } ^\perp (R \otimes I_{\mc{F}} ) \mbf{x} \| _{\bH ^2 _d \otimes \mc{F} } ^2 \nn \\
& = & \| P_{\ker{F} } ^\perp (R \otimes I_{\mc{F}} ) P_{\ker{F} } ^\perp \otimes I_d \mbf{x} \| _{\bH ^2 _d \otimes \mc{F} } ^2 \quad \quad \mbox{(By $R-$invariance of $\ker{F}$)} \nn \\
& \leq & \|  (R \otimes I_{\mc{F}} ) P_{\ker{F} } ^\perp \otimes I_d  \mbf{x} \| _{\bH ^2 _d \otimes \mc{F} } ^2 \nn \\
& = &\|  P_{\ker{F} } ^\perp \otimes I_d  \mbf{x} \| _{\bH ^2 _d \otimes \mc{F} \otimes \C ^d } ^2 \nn \\
& = & \|F(L) \otimes I_d  \mbf{x} \| ^2 _{\scr{M} ^L (F ) \otimes \C ^d}. \nn \ea 
This proves that $X_F$ is a row contraction. The adjoint action of $X_F$ is a straightforward calculation:
\ba \ip{F(L) y}{X_F ^\alpha F(L) x}_F & = & \ip{P_{\ker{F} ^\perp} y}{R^\alpha \otimes I_{\mc{F}} x}_{\bH ^2} \nn \\
& = & \ip{P _{\ker{F}} ^\perp (R^{\alpha }) ^* \otimes I_{\mc{F}} P_{\ker{F}} ^\perp y}{x} _{\bH ^2 } \nn \\
& = & \ip{F(L) (R^{\alpha }) ^* \otimes I_{\mc{F}} P_{\ker{F}} ^\perp y}{F(L) x} _{F }. \nn \ea
\end{proof}

\begin{thm}{ (NC de Branges-Beurling)} \label{NCdBB}
A linear subspace $\scr{M} \underset{\mbox{\tiny \mbox{\tiny b.c.}}}{\subseteq} \bH ^2 _d \otimes \J$ is boundedly contained in $\bH ^2 _d \otimes \J$, is $(R \otimes I_\J ) -$invariant, and $X:= (R \otimes I_\J ) |_\scr{M}$ is a row contraction if and only if there is a bounded operator-valued left multiplier $F \in  \bH ^\infty _d \otimes \L (\mc{F} , \J ) $ so that $\scr{M} = \scr{M} ^L (F)$ and $\| F (L) \| = \| \mr{e} \|$, where $\mr{e} : \scr{M} \hookrightarrow \bH ^2 _d \otimes \J$ is the bounded embedding.  
\end{thm}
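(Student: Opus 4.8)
The forward (\emph{only if}) implication is already essentially Lemma \ref{rowcontract}: given $F \in \bH ^\infty _d \otimes \L (\mc{F},\J)$, the space $\scr{M} ^L (F)$ is boundedly contained in $\bH ^2 _d \otimes \J$, is $(R\otimes I_\J)$-invariant, and $X_F = (R\otimes I_\J)|_{\scr{M} ^L (F)}$ is a row contraction; the identity $\|F(L)\| = \|\mr{e}\|$ falls out of the definition of the range-space norm. So the plan is to prove the converse. I write $\mr{e}:\scr{M}\hookrightarrow\bH ^2 _d \otimes\J$ for the bounded embedding, and note that since $\scr{M}$ is $(R\otimes I_\J)$-invariant with $X = (R\otimes I_\J)|_\scr{M}$, the embedding intertwines: $\mr{e}X_k = (R_k\otimes I_\J)\mr{e}$ for each $k$. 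The goal is to manufacture a coefficient space $\mc{F}$ and a co-isometry $\Pi:\bH ^2 _d \otimes\mc{F}\to\scr{M}$ \emph{onto} $\scr{M}$ intertwining the right shifts, $X_k\Pi = \Pi(R_k\otimes I_{\mc{F}})$; the multiplier will then be defined as $F(L):=\mr{e}\,\Pi$.

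The first key step is to show that $X$ is \emph{pure}. Since $X$ is a row contraction, the completely positive maps $\Phi_X^n(I)=\sum_{|\alpha|=n}X^\alpha(X^\alpha)^*$ decrease (in $n$) to a positive contraction $Q\geq 0$, and $X$ is pure exactly when $Q=0$. Taking adjoints in the intertwining relation yields $(X^\alpha)^*\mr{e}^* = \mr{e}^*(R^\alpha\otimes I_\J)^*$, so for every $f\in\bH ^2 _d \otimes\J$,
\[ \sum _{|\alpha|=n}\|(X^\alpha)^*\mr{e}^* f\|_{\scr{M}}^2 = \sum _{|\alpha|=n}\|\mr{e}^*(R^\alpha\otimes I_\J)^* f\|_{\scr{M}}^2 \leq \|\mr{e}\|^2\sum _{|\alpha|=n}\|(R^\alpha\otimes I_\J)^* f\|^2 . \]
The right-hand side tends to $0$ because the free shift $R\otimes I_\J$ is a pure row isometry. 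As $\mr{e}$ is injective, $\mr{e}^*$ has dense range, so $\langle Q m,m\rangle=0$ on a dense subset of $\scr{M}$; since $Q\geq 0$ is bounded, this forces $Q=0$, i.e. $X$ is pure.

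With purity established I invoke \cite[Proposition 2.3]{Pop-dil} together with the co-invariance and uniqueness recorded before Lemma \ref{minunique}: the minimal row isometric dilation of the pure row contraction $X$ is unitarily equivalent to copies of the right free shift, i.e. to $R\otimes I_{\mc{F}}$ on $\bH ^2 _d \otimes\mc{F}$ for a suitable multiplicity space $\mc{F}$, with $\scr{M}$ sitting co-invariantly inside $\bH ^2 _d \otimes\mc{F}$ and $X_k^*=(R_k^*\otimes I_{\mc{F}})|_\scr{M}$. Setting $\Pi:=P_{\scr{M}}$, co-invariance gives $\Pi(R_k\otimes I_{\mc{F}}) = X_k\Pi$, and $\Pi$ is a co-isometry onto $\scr{M}$. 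Defining $F(L):=\mr{e}\,\Pi:\bH ^2 _d \otimes\mc{F}\to\bH ^2 _d \otimes\J$ and combining the two intertwining relations,
\[ (R_k\otimes I_\J)F(L) = \mr{e}X_k\Pi = \mr{e}\Pi(R_k\otimes I_{\mc{F}}) = F(L)(R_k\otimes I_{\mc{F}}) , \]
so $F(L)$ commutes with the right free shifts. By the commutant description of the left multiplier algebra \cite{DP-inv,Pop-charfun}, every bounded operator between vector-valued Fock spaces that intertwines the right shifts is a left multiplier, whence $F(L)=M^L_F$ for some $F\in\bH ^\infty _d \otimes\L(\mc{F},\J)$. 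Finally $\ran{F(L)} = \mr{e}(\ran{\Pi}) = \mr{e}(\scr{M}) = \scr{M}$; since $\mr{e}$ is injective, $\ker{F(L)}=\ker{\Pi}$, and as $\Pi$ is a co-isometry the range-space norm of $\scr{M} ^L (F)$ coincides with the norm of $\scr{M}$, while $\Pi\Pi^*=I_\scr{M}$ gives $F(L)F(L)^*=\mr{e}\,\mr{e}^*$ and hence $\|F(L)\|=\|\mr{e}\|$.

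The hard part is the purity step. Everything downstream --- that the dilation is a genuine Fock space with no unitary summand, and therefore that $F(L)$ is an honest Fock-space multiplier rather than an abstract intertwiner --- rests on $Q=0$, and the only available leverage is the embedding $\mr{e}$, which points the \emph{wrong way} for directly transporting the purity limit. The device above sidesteps this by passing to adjoints, using density of $\ran{\mr{e}^*}$ and the SOT-monotonicity of $\Phi_X^n(I)$ to upgrade vanishing on a dense set to $Q=0$. A secondary bookkeeping point that needs care is the interplay of left and right shifts: because $X$ is modelled on $R$, I use the right-handed form of the functional model (equivalently conjugate by the transpose unitary $U_\dag$), so that $F(L)=\mr{e}\,\Pi$ commutes with $R$ and is thus a \emph{left} multiplier, as required.
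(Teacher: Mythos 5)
Your proposal is correct and follows essentially the same route as the paper's proof: establish purity of $X$ by transporting the purity of $R \otimes I_\J$ through $\mr{e}^*$ and using density of its range, invoke Popescu's dilation theorem to model the minimal row isometric dilation as $R \otimes I_{\mc{F}}$, define $F(L) = \mr{e}\,P_{\scr{M}}\,U$ (your $\Pi$ just absorbs the unitary $U$), and identify $F(L)$ as a left multiplier via the intertwining with the right free shifts. The only cosmetic difference is that you package purity as the vanishing of the SOT-limit $Q$ of the decreasing maps $\Phi_X^n(I)$ rather than the paper's $(X^*)^{[n]} \to 0$ argument with an explicit $\varepsilon$-approximation; these are equivalent.
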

This theorem is inspired by \cite[Theorem 3]{dB-ss}.
\begin{lemma} \label{pure}
If $\scr{M} \subseteq \bH ^2 _d \otimes \J$ is boundedly contained in vector-valued NC Hardy space, $R \otimes I_\J -$invariant, and $X:=R \otimes I_\J | _{\scr{M} \otimes \C ^d}$ is a row contraction, then it is a pure row contraction.
\end{lemma}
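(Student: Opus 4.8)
The plan is to reduce the purity of $X$ to the purity of the ambient right free shift $R \otimes I_\J$ on $\bH ^2 _d \otimes \J$, transported across the bounded embedding $\mr{e} : \scr{M} \hookrightarrow \bH ^2 _d \otimes \J$. Write $V := R \otimes I_\J$. Since $\scr{M}$ is $V$-invariant and $X$ is the corresponding restriction, the embedding intertwines the two row contractions, $\mr{e}\, X_k = V_k\, \mr{e}$ for each $k$, and hence $\mr{e}\, X^\alpha = V^\alpha\, \mr{e}$ for every $\alpha \in \F ^d$. Taking Hilbert space adjoints yields the dual intertwining
\[ X^{*\alpha}\, \mr{e}^* = \mr{e}^*\, V^{*\alpha}, \qquad \alpha \in \F ^d, \]
which is the relation I will exploit. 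Recall that purity of $X$ means $\sum _{|\alpha| = n} \| X^{*\alpha} m \| _\scr{M} ^2 \to 0$ as $n \to \infty$ for every $m \in \scr{M}$, equivalently that $\Phi _X ^n (I) \to 0$ strongly, where $\Phi _X (A) := \sum _{k=1} ^d X_k A X_k ^*$.

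First I would record two standard facts. (i) The ambient shift $V = R \otimes I_\J$ is a pure row isometry: the right free shift $R$ is unitarily equivalent to the universal pure left free shift $L$ via the transpose unitary $U_\dag$, and purity is preserved under ampliation by $I_\J$, so $\sum _{|\alpha| = n} \| V^{*\alpha} g \| ^2 \to 0$ for all $g \in \bH ^2 _d \otimes \J$. (ii) Because $\mr{e}$ is an injective bounded embedding, $\ker{\mr{e}} = \{ 0 \}$, so $\ran{\mr{e}^*}$ is dense in $\scr{M}$; that is, the vectors $m = \mr{e}^* g$, $g \in \bH ^2 _d \otimes \J$, are dense in $\scr{M}$.

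On this dense set the estimate is immediate: for $m = \mr{e}^* g$ the dual intertwining gives $X^{*\alpha} m = \mr{e}^* V^{*\alpha} g$, so that
\[ \sum _{|\alpha| = n} \| X^{*\alpha} m \| _\scr{M} ^2 \;\leq\; \| \mr{e} \| ^2 \sum _{|\alpha| = n} \| V^{*\alpha} g \| _{\bH ^2 _d \otimes \J} ^2 \;\xrightarrow[n \to \infty]{}\; 0 \]
by purity of $V$. To pass from this dense set to an arbitrary $m \in \scr{M}$ I would invoke that $X$ is a row contraction: since $\Phi _X (I) = X X^* \leq I$ and $\Phi _X$ is completely positive (hence order preserving), the sequence $\Phi _X ^n (I)$ is decreasing with $0 \leq \Phi _X ^n (I) \leq I$, whence $\sum _{|\alpha| = n} \| X^{*\alpha} m \| _\scr{M} ^2 = \ip{\Phi _X ^n (I) m}{m} \leq \| m \| _\scr{M} ^2$ uniformly in $n$. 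A three-term splitting then closes the argument: given $\eps > 0$, choose $m' = \mr{e}^* g$ with $\| m - m' \| _\scr{M} < \eps$, bound the tail of $m - m'$ uniformly by $\| m - m' \| _\scr{M} < \eps$ using the uniform bound, and send the tail of $m'$ to $0$ via the dense case, so that $\limsup _n \big( \sum _{|\alpha| = n} \| X^{*\alpha} m \| _\scr{M} ^2 \big) ^{1/2} \leq \eps$ for every $\eps$, giving purity.

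The one genuine obstacle is conceptual rather than computational: the norm on $\scr{M}$ is only boundedly — not isometrically — comparable to the ambient norm, so one cannot directly quote purity of $V$ for vectors of $\scr{M}$. The bridge is exactly the dual intertwining $X^{*\alpha} \mr{e}^* = \mr{e}^* V^{*\alpha}$ together with the density of $\ran{\mr{e}^*}$, while the hypothesis that $X$ is a row contraction supplies precisely the uniform-in-$n$ control needed to upgrade convergence from the dense set to all of $\scr{M}$.
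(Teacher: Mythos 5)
Your proposal is correct and follows essentially the same route as the paper: intertwine via the bounded embedding to get $X^{*\alpha}\mr{e}^* = \mr{e}^*(R^{*\alpha}\otimes I_\J)$, establish the decay on the dense range of $\mr{e}^*$ using purity of $R\otimes I_\J$, and pass to all of $\scr{M}$ by an $\eps$-splitting whose uniform control comes from the row-contraction hypothesis. The only cosmetic difference is that you phrase the uniform bound via $\Phi_X^n(I)\leq I$ while the paper phrases it as contractivity of the iterated adjoint $(X^*)^{[n]}$; these are the same estimate.
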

\begin{proof}
Recall that a row contraction, $T$, on $\H$ is \emph{pure}, or of class $C_{\cdot \ 0}$, if 
$ \lim _{n \rightarrow \infty} \sum _{|\alpha | = n } \| (T^* ) ^\alpha h \| \rightarrow 0, $ for any fixed $h \in \H$.
Equivalently, 
$$ \lim _{n \rightarrow \infty } \|(T^*) ^{[n]} h \| =0, $$ for any fixed $h \in \H$. Here we define:
$$ (T^*) ^{[n]} = \underbrace{(T^* \otimes I_d \otimes I_{n-1} ) (T^* \otimes I_d \otimes I_{n-2} ) \cdots T^*}_{\mbox{$n$ terms}}, $$ and $(T^*) ^{[1]} = T^*$. Observe that $(T^* ) ^{[n]}$ is a contraction for any $n \in \N$. We could further identify each $(T^* ) ^{[n]}$ with a contractive linear operator on $\H \otimes \ell ^2 (\N )$, \emph{e.g.} if $d=2$, and $\mbf{h} = ( h _1 , h_2 , \cdots ) ^T \in \H \otimes \ell ^2$, then
$$ (T^*) ^{[2]} \mbf{h} \simeq \bpm T_1 ^* T_1 ^* h _1 \\ T_1 ^* T_2 ^* h _1 \\ T_2 ^* T_1 ^* h _1 \\ T_2 ^* T_2 ^* h _1 \\ 0 \\ \vdots \epm.$$ Under this identification $(T^* ) ^{[n]} \stackrel{SOT}{\rightarrow} 0$ if and only if $T$ is a pure row contraction.

Let $\mr{e} : \scr{M} \hookrightarrow \bH ^2 _d \otimes \J$ be the bounded (and injective) embedding. Taking adjoints of 
the equation $$\mr{e} R ^\alpha \otimes I_\J | _\scr{M} = R^\alpha \otimes I_\J \mr{e}, $$ yields
$$ (X^\alpha ) ^* \mr{e} ^* = \mr{e} ^* (R^\alpha ) ^* \otimes I_\J. $$ 

It follows that for any $h \in \bH ^2 _d \otimes \J $,
\ba \| (X^*) ^{[n]} \mr{e} ^* h \| ^2 & = & \| (\mr{e} ^* \otimes I_d \otimes I_n)  (R^* \otimes I_\J ) ^{[n]} h \| ^2 \nn \\
& \leq & \| \mr{e} \| \| (R^* \otimes I_\J) ^{[n]} h \| ^2 \rightarrow 0, \nn \ea 
since $R \otimes I_\J$ is pure. More generally, note that since $\mr{e}$ is injective, $\mr{e} ^*$ has dense range in $\scr{M}$. Hence, given any $g \in \scr{M}$, we can find a sequence $(h_k) \subset \bH ^2 _d \otimes \J$ so that $g_k := \mr{e} ^* h_k \rightarrow g$ in the norm of $\scr{M}$. Given any $\eps >0$, choose $K \in \N$ so that $k> K $ implies that $\| g_k - g \| _{\scr{M}} < \eps$. Then, 
$$ \| (X^*) ^{[n]} g \| ^2  \leq  \underbrace{\| (X^*) ^{[n]} (g - g _{K+1} ) \| ^2}_{\leq \eps ^2} + \underbrace{\| (X^*) ^{[n]} \mr{e} ^* h_{K+1} \| ^2}_{\rightarrow 0}. $$ This proves that 
$$ \lim _{n\rightarrow \infty} \| (X^*) ^{[n]} g \| ^2 < \eps ^2, $$ for any $\eps >0$ so that 
$ \| (X^*) ^{[n]} g \| ^2 \rightarrow 0$, proving the claim.
\end{proof}

\begin{proof}{ (of Theorem \ref{NCdBB}) }
First suppose that $\scr{M} = \scr{M} ^L (F)$. Then any $x \in \scr{M}$ has the form $x= F(L) f$ for some $f \in \bH ^2 _d \otimes \mc{F}$, and 
\ba \| \mr{e} x \| ^2 _{\bH ^2 _d \otimes \J} & = & \| F(L) f \| ^2 _{\bH ^2 _d \otimes \J } \nn \\
& \leq & \| F(L) \| ^2 \| P _{\ker{F}} ^\perp f \| ^2 _{\bH ^2 _d \otimes \mc{F}} \nn \\
& = & \| F(L) \| ^2 \| x \| ^2 _{F}. \nn \ea As proven in Lemma \ref{rowcontract}, $\scr{M} ^L (F)$ is $R\otimes I_\J -$invariant and $X_F = (R \otimes I_\J ) | _{\scr{M} ^L (F)}$ is a row contraction.

Conversely suppose that $\scr{M} \bc \bH ^2 _d \otimes \J$ obeys the hypotheses of the theorem. 
Let $X := R \otimes \J | _{\scr{M}} $. By Lemma \ref{pure}, $X$ is pure, so that the minimal row isometric dilation, $\hat{X}$, of $X$ is unitarily equivalent to copies of $R$, $\hat{X} \simeq R \otimes I _\mc{F}$ \cite[Proposition 2.3]{Pop-dil}. Let $\hat{\scr{M}}$ be the Hilbert space on which $\hat{X}$ acts. Let $U : \bH ^2 _d \otimes \mc{F} \rightarrow \hat{\scr{M}}$ be the unitary so that $U R \otimes I_{\mc{F}} (U^* \otimes I_d ) = \hat{X}$, and define the bounded operator 
$$ F:=   \mr{e} P _\scr{M} U : \bH ^2 _d \otimes \mc{F} \rightarrow \bH ^2 _d \otimes \J, $$ where $P_\scr{M}$ is the orthogonal projection of $\hat{\scr{M}}$ onto $\scr{M}$ and $\mr{e} : \scr{M}  \hookrightarrow \bH ^2 _d \otimes \J$ is the bounded embedding so that $\| F \| \leq \| \mr{e} \|$. Then observe that 
\ba F R \otimes I _{\mc{F}} & = & \mr{e} P _\scr{M} \hat{X}  (U \otimes I_d )  \nn \\
& = & \mr{e} R \otimes I_{\mc{J}} | _{\scr{M}} P_{\scr{M}} (U \otimes I_d ) \nn \\
& = & R \otimes I_\J F. \nn \ea By \cite[Theorem 1.2]{DP-inv}, $F = F(L) \in  \bH ^\infty _d \otimes \L (\mc{F} , \J )$. Observe that $U \ker{F (L)} ^\perp = \scr{M}$: If $h \in \bH ^2 _d \otimes \mc{F}$ and $F(L) h = 0$, then  
$$ 0  =  Fh  = \mr{e} P_\scr{M} U h, $$ which happens if and only if $P_\scr{M} U h = 0 $ since $\mr{e}$ is injective.
This proves that $U \ker{F(L)} \subseteq \scr{M} ^\perp$. Conversely if $m_\perp \in \scr{M} ^\perp$ then
$$ F U^* m_\perp = \mr{e} P_\scr{M} U U^* m_\perp =0, $$ so that 
$$ \ker{F(L)} = U^* U \ker{F(L)} \subseteq U^* \scr{M} ^\perp \subseteq \ker{F(L)}, $$ and the claim follows. 

Next, observe that $\scr{M} = \scr{M} ^L (F)$: Given any $g \in \scr{M}$, let $h := U^* g  \in \bH ^2 _d \otimes \mc{F}$, and note that 
$$ (F (L) h ) (Z) = (\mr{e} P_\scr{M} U U^* g) (Z) = g(Z), $$ for any $Z \in \B ^d _\N$. This shows that $\scr{M} \subseteq \scr{M} ^L (F)$. Conversely given any $F(L) h \in \scr{M} ^L (F)$, for $h \in \bH ^2 _d \otimes \mc{F}$, we can assume, without loss in generality that $h = P _{\ker{F} } ^\perp h$ so that $g= Uh \in \scr{M}$. Then, as above, 
$$ (F(L) h) (Z) = (\mr{e} P_\scr{M} U h ) (Z) = g(Z), $$ so that $\scr{M} = \scr{M} ^L (F)$ as vector spaces.

We claim that the norms of $\scr{M}$ and $\scr{M} ^L (F)$ are the same, so that $\scr{M} = \scr{M} ^L (F)$ isometrically:
Given any $g \in \scr{M} = \scr{M} ^L (F)$,
$$ \| g \| ^2 _\scr{M} = \| U^* g \| _{\bH ^2 _d \otimes \mc{F}}, $$ since $U^*$ is unitary, and 
\ba \| g \| ^2 _{\scr{M} ^L (F)} & = & \| \mr{e} P_\scr{M} U U^* g \| ^2 _{\scr{M} ^L (F)} \nn \\
& = & \| F(L) U^* g \| ^2 _{\scr{M} ^L (F)} \nn \\
& = & \|P _{\ker{F}} ^\perp U^* g \| ^2 _{\bH ^2 _d \otimes \mc{F}} \nn \\
& =& \| U^* g \| ^2 _{\bH ^2 _d \otimes \mc{F}} = \| g \| ^2 _{\scr{M}}. \nn \ea 

Finally, by construction, for any $x \in \ker{F(L) } ^\perp$, 
\ba \| F(L) x \| ^2 _{\bH ^2 _d \otimes \J } & = & \| \mr{e} P_\scr{M} U x \| ^2 \nn \\
& = & \| \mr{e} Ux \| ^2, \nn \ea since $U  \ker{F(L)} ^\perp = \scr{M}$. This proves that $\| F(L) \| = \| \mr {e} \|$.
\end{proof}

\begin{cor} \label{Multfactor}
If $\scr{M} \bc \bH ^2 _d \otimes \J$ satisfies the conditions of the NC de Branges-Beurling Theorem, then $\scr{M} = \scr{M} ^L (F)$ for 
$$ F(L) := \mr{e} P_\scr{M} U, $$ where $\mr{e} : \scr{M} \hookrightarrow \bH ^2 _d \otimes \J$ is the bounded embedding, $P_\scr{M} : \hat{\scr{M}} \rightarrow \scr{M}$ is orthogonal projection,
$\hat{\scr{M}} \subseteq \scr{M}$ is the Hilbert space of the minimal row isometric dilation $\hat{X} _F$ of $X_F := (R \otimes I_\J) | _{\scr{M}}$, and $U:\bH ^2 _d \otimes \mc{F} \rightarrow \hat{\scr{M}}$ is the unitary intertwining $R \otimes I_{\mc{F}}$ and $\hat{X} _F$. 
\end{cor}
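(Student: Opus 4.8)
The plan is to read this off directly from the forward (existence) half of the proof of Theorem \ref{NCdBB}, since that argument is entirely constructive and produces precisely the multiplier named here. First I would invoke the hypotheses: $\scr{M} \bc \bH ^2 _d \otimes \J$ is $(R \otimes I_\J )-$invariant, so that $X_F := (R \otimes I_\J ) | _\scr{M}$ is a well-defined row contraction. By Lemma \ref{pure}, $X_F$ is in fact \emph{pure}, and so by \cite[Proposition 2.3]{Pop-dil} its minimal row isometric dilation $\hat{X} _F$, acting on $\hat{\scr{M}} \supseteq \scr{M}$, is unitarily equivalent to a copy of the right free shift, $\hat{X} _F \simeq R \otimes I_\mc{F}$, where $\mc{F}$ is the (multiplicity) coefficient space. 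This furnishes the intertwining unitary $U : \bH ^2 _d \otimes \mc{F} \rightarrow \hat{\scr{M}}$ with $U (R \otimes I_\mc{F} ) = \hat{X} _F (U \otimes I_d )$.

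Next I would set $F := \mr{e} P_\scr{M} U$ and verify the two assertions of the corollary. For the multiplier property, one checks (exactly as in the theorem) that $F$ intertwines the right free shifts, $F (R \otimes I_\mc{F} ) = (R \otimes I_\J ) F$, using that $P_\scr{M}$ compresses $\hat{X} _F$ to $X_F = (R\otimes I_\J )|_\scr{M}$ and that $\mr{e}$ intertwines $X_F$ with $R \otimes I_\J$ on the nose. By \cite[Theorem 1.2]{DP-inv}, any bounded operator intertwining these right shifts is a left multiplier, so $F = F(L) \in \bH ^\infty _d \otimes \L (\mc{F} , \J )$.

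Finally, the identification $\scr{M} = \scr{M} ^L (F)$ as sets, with matching norms, is precisely what the remainder of the proof of Theorem \ref{NCdBB} establishes: one shows $U \ker{F(L)} ^\perp = \scr{M}$, then verifies $(F(L) U^* g ) (Z) = g(Z)$ for $g \in \scr{M}$ to obtain the set equality, and finally that $\| g \| _{\scr{M} ^L (F)} = \| P_{\ker{F}} ^\perp U^* g \| = \| U^* g \| = \| g \| _\scr{M}$ for the isometric matching of norms. Thus the corollary is essentially a bookkeeping restatement recording the explicit formula $F(L) = \mr{e} P_\scr{M} U$, and there is no genuine obstacle beyond the theorem itself.

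The only point I expect to warrant care is canonicity: the data $(\mc{F} , U)$ are determined only up to the equivalence of minimal row isometric dilations from Lemma \ref{minunique}, so $F(L)$ itself is pinned down only up to right multiplication by a unitary change of coefficient space. Since the conclusion concerns the range space $\scr{M} ^L (F)$ and the norm $\| F(L) \| = \| \mr{e} \|$, both of which are invariant under such a change, this ambiguity is harmless for the stated result.
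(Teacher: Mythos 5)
Your proposal is correct and matches the paper exactly: the corollary carries no separate proof in the paper and is simply a record of the explicit construction $F(L) = \mr{e}\, P_\scr{M}\, U$ carried out in the converse direction of the proof of Theorem \ref{NCdBB}, which is precisely the argument you reproduce (purity via Lemma \ref{pure}, Popescu's identification of the minimal dilation with $R \otimes I_{\mc{F}}$, the intertwining check, Davidson--Pitts to get a left multiplier, and the set/norm identification). Your closing remark on the uniqueness of the data only up to equivalence of minimal dilations is a sensible observation and consistent with Lemma \ref{minunique}.
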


\begin{remark} \label{ranUF}
Further observe that $F(L) = \mr{e} _F P _{\scr{M} ^L (F )} U_F$, where 
$U_F : \bH ^2 _d \otimes \mc{F} \rightarrow \hat{\scr{M}} ^L (F)$ intertwines $R \otimes I _\mc{F}$ with the minimal row isometric dilation of $(X_F , \scr{M} ^L (F) )$ where $U_F \ker{F(L)} ^\perp = \scr{M} ^L (F)$. Hence, if $f \in \ker{F(L)} ^\perp$ then $U_F f = F(L) f $, and $U_F \ker{F(L)} = \hat{\scr{M}} ^L (F) \ominus \scr{M} ^L (F)$.
\end{remark}

\subsection*{Aside: Complementary Spaces}

In \cite{dB-ss}, de Branges proves results using a `complementary viewpoint'. Namely, instead of working with shift-invariant subspaces contractively-contained in $H^2$, he studies shift co-invariant subspaces contractively contained in $H^2$ which obey an `inequality for difference quotients'. Such subspaces are complementary to shift-invariant subspaces in the following sense:

\begin{defn}{ (\cite{dBss},\cite[Section 16.9]{FMHb2})}
    Given a Hilbert space $\mc{H}$, and a Hilbert space, $\scr{M}$, contractively contained in $\H$, the \emph{complementary space} to $\scr{M}$ is the Hilbert space $\scr{H} \subseteq \H$ defined as the set of all $h \in \H$ so that 
$$ \| h \| ^2 _\scr{H} := \sup _{m \in \scr{M}} \left( \| h +m \| ^2 _{\mc{H}} - \| m \| ^2 _{\scr{M}} \right) < +\infty. $$ 
\end{defn}
\begin{lemma}
Suppose that $\scr{M} _1 , \scr{M} _2$ are contractively contained in a Hilbert space, $\mc{H}$. Then $\scr{M} _1 \cc \scr{M} _2$ if and only if their complementary spaces obey $\scr{H} _2 \cc \scr{H} _1$.
\end{lemma}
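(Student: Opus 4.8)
The plan is to recast contractive containment and the complementation operation in terms of the positive contraction (Gram operator) that each contractively contained space determines on $\mc{H}$, so that the asserted equivalence reduces to the trivial operator fact $P_1 \leq P_2 \iff I - P_2 \leq I - P_1$. To each $\scr{M} \cc \mc{H}$ I would attach the operator $P := \mr{e}\mr{e}^* \in \mc{L}(\mc{H})$, where $\mr{e}: \scr{M} \hookrightarrow \mc{H}$ is the contractive (hence injective) embedding; then $0 \leq P \leq I$, and the polar decomposition of $\mr{e}$ shows $\scr{M} = \ran{P^{1/2}}$ isometrically, the range carrying the norm $\|P^{1/2}g\|_{\mc{M}(P)} := \|P_{\ker{P^{1/2}}}^\perp g\|_\mc{H}$ that turns $P^{1/2}$ into a coisometry onto $\scr{M}$. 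Writing $\mc{M}(P) := \ran{P^{1/2}}$ for this operator-range space, the assignment $\scr{M} \mapsto P$ is a bijection between spaces contractively contained in $\mc{H}$ and positive contractions on $\mc{H}$.

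The argument then rests on two facts. \textbf{(a)} For positive contractions $A, B$ on $\mc{H}$, $\mc{M}(A) \cc \mc{M}(B)$ if and only if $A \leq B$; this follows from the Douglas factorization lemma \cite{DFL}, which identifies the contractive inclusion of the range spaces with the operator inequality $A \leq B$. Applied to the Gram operators $P_1, P_2$ of $\scr{M}_1, \scr{M}_2$ this gives $\scr{M}_1 \cc \scr{M}_2 \iff P_1 \leq P_2$. \textbf{(b)} The complementary space of $\scr{M} = \mc{M}(P)$ is $\mc{M}(I - P)$ isometrically. Granting these, the lemma is immediate: writing $\scr{H}_i = \mc{M}(I - P_i)$ and applying (a) to the defect operators,
\[ \scr{M}_1 \cc \scr{M}_2 \iff P_1 \leq P_2 \iff I - P_2 \leq I - P_1 \iff \mc{M}(I - P_2) \cc \mc{M}(I - P_1) \iff \scr{H}_2 \cc \scr{H}_1. \]

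I expect step \textbf{(b)} to be the main obstacle, and I would establish it by evaluating the variational supremum in the definition of $\scr{H}$ directly. Parametrizing $m = P^{1/2}f$ over $\scr{M}$, with $\|m\|_\scr{M}^2 = \|f\|^2$ for $f \perp \ker{P^{1/2}}$, a short expansion gives
\[ \|h + m\|_\mc{H}^2 - \|m\|_\scr{M}^2 = \|h\|_\mc{H}^2 + 2\re{\ip{P^{1/2}h}{f}} - \ip{(I-P)f}{f}. \]
The supremum of the concave quadratic $f \mapsto 2\re{\ip{P^{1/2}h}{f}} - \ip{(I-P)f}{f}$ is finite exactly when $P^{1/2}h \in \ran{(I-P)^{1/2}}$, and completing the square then yields $\|h\|_\scr{H}^2 = \|h\|_\mc{H}^2 + \|(I-P)^{-1/2}P^{1/2}h\|^2$, with $(I-P)^{-1/2}$ the Moore--Penrose inverse.

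The decisive simplification is that $P$ and $I - P$ commute, so by the spectral theorem I can model $\mc{H} = L^2(\mu)$ with $P$ and $I-P$ acting as multiplication by $p$ and $1-p$, $0 \le p \le 1$, and carry out the remaining bookkeeping fibrewise. The scalar identity $\int |h|^2/(1-p)\,d\mu = \int |h|^2 \, d\mu + \int p|h|^2/(1-p)\,d\mu$ shows at once that the finiteness condition $P^{1/2}h \in \ran{(I-P)^{1/2}}$ is equivalent to $h \in \ran{(I-P)^{1/2}}$, and that $\|h\|_\scr{H}^2 = \|(I-P)^{-1/2}h\|^2$, which is precisely the range norm of $\mc{M}(I-P)$. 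Thus $\scr{H} = \mc{M}(I-P)$ isometrically, completing (b). The only genuinely delicate points are the domain conventions for the Moore--Penrose inverses and the passage from scalars to operators through the functional calculus; commutativity of $P$ and $I-P$ renders both routine, and this identification is the classical de Branges complementation theorem (\cite{dBss}, \cite[Section 16.9]{FMHb2}).
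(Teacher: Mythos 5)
Your argument is correct and essentially complete; the one thing to be aware of is that the paper offers no proof of this lemma at all --- it is stated as a known fact of de Branges's complementation theory, with the variational definition of $\scr{H}$ credited to \cite{dBss} and \cite[Section 16.9]{FMHb2} --- so there is no in-paper argument to compare against, only the style of the proof the authors do give for the \emph{following} lemma, which works directly with the variational supremum. Relative to that style, note that the forward implication has a one-line direct proof: if $\scr{M}_1 \cc \scr{M}_2$ then every $m \in \scr{M}_1$ lies in $\scr{M}_2$ with $\| m \| _{\scr{M} _2} \leq \| m \| _{\scr{M} _1}$, so $\| h + m \| ^2 _{\mc{H}} - \| m \| ^2 _{\scr{M} _1} \leq \| h + m \| ^2 _{\mc{H}} - \| m \| ^2 _{\scr{M} _2}$ and taking suprema gives $\| h \| _{\scr{H} _1} \leq \| h \| _{\scr{H} _2}$, i.e.\ $\scr{H} _2 \cc \scr{H} _1$. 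The converse is where the real content lies, since one must know that complementation is an involution ($\scr{M} \mapsto \scr{H} \mapsto \scr{M}$), and your Gram-operator reduction --- $\scr{M} = \ran{P^{1/2}}$ with $P = \mr{e}\mr{e}^*$, contractive containment equivalent to $P_1 \leq P_2$ via the quantitative Douglas lemma (exactly the form $\| H \| ^2 = \inf \{ \la ^2 : FF^* \leq \la ^2 GG^* \}$ the paper itself quotes), and the identification $\scr{H} = \ran{(I-P)^{1/2}}$ with its range norm --- supplies precisely that. Your verification of step (b), including the finiteness criterion $P^{1/2} h \in \ran{(I-P)^{1/2}} \Leftrightarrow h \in \ran{(I-P)^{1/2}}$ via the scalar identity in the spectral model, is sound (with the usual caveat that the spectral representation may require a direct sum of $L^2$ spaces rather than a single one, which does not affect the fibrewise computation). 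The payoff of your route is that it proves both directions at once and makes the statement a transparent consequence of operator monotonicity of $P \mapsto I - P$.
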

\begin{defn}
    A linear subspace $\scr{H} \subseteq H^2 (\B ^d _N ) $ which is contractively contained in the NC Hardy Space and co-invariant for the right free shifts, is said 
    to obey the \emph{inequality for (right) difference quotients} if:
$$ \| R ^* h \| _{\scr{H} \otimes \C ^d } ^2 \leq \| h \| ^2 _\scr{H}  - | h(0) | ^2; \quad \quad h \in \scr{H}. $$
\end{defn}

\begin{lemma}
Let $\scr{M} \cc \bH ^2 _d$ be a HIlbert space contractively contained in the NC Hardy space. Then $\scr{M}$ is $R-$invariant, and $R|_{\scr{M}}$ is a row contraction, if and only if the complementary space $\scr{H} \cc \bH ^2 _d$ is $R-$coinvariant and obeys the inequality for NC difference quotients.
\end{lemma}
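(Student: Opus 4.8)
The plan is to collapse both sides of the asserted equivalence into a single \emph{contractive containment} of subspaces of $\bH ^2 _d$ and then invoke the complementary-space duality lemma proved above. The structural fact I will lean on is that the right free shift assembles into a row isometry $R : \bH ^2 _d \otimes \C ^d \rightarrow \bH ^2 _d$ whose range is exactly $\{ 1 \} ^\perp$: indeed $R_k ^* R_j = \delta _{k,j} I$ gives $R^* R = I$, while $\sum _k R_k R_k ^* = I - P_{\C 1}$, where $P_{\C 1}$ projects onto the constants $\C 1$ and $\ip{P_{\C 1} h}{h} = | h(0) | ^2$. Throughout I use the representing positive contractions: $\scr{M} = \ran{P^{1/2}}$ with its range norm for some $0 \leq P \leq I$, and correspondingly the complementary space is $\scr{H} = \ran{(I-P)^{1/2}}$.

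First I would recast the left-hand condition. Writing the row action as $\mbf{m} = (m_1 , \ldots , m_d ) \mapsto R \mbf{m} = \sum _k R_k m_k$, the requirement that $\scr{M}$ be $R-$invariant with $R |_{\scr{M}}$ a row contraction is exactly the requirement that $R$ carry $\scr{M} \otimes \C ^d$ contractively \emph{into} $\scr{M}$. Since $R$ is isometric, hence injective, the pushforward $\scr{N} := R ( \scr{M} \otimes \C ^d )$ inherits the norm $\| R \mbf{m} \| _{\scr{N}} := \| \mbf{m} \| _{\scr{M} \otimes \C ^d}$, is represented by the positive contraction $R (P \otimes I_d ) R^*$, and so satisfies $\scr{N} \cc \bH ^2 _d$. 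Unwinding definitions, $\scr{N} \subseteq \scr{M}$ as sets is $R-$invariance, and contractivity of the embedding $\scr{N} \hookrightarrow \scr{M}$ is the row-contraction inequality $\| R \mbf{m} \| _{\scr{M}} \leq \| \mbf{m} \| _{\scr{M} \otimes \C ^d}$. Thus the left-hand condition is equivalent to the single statement $\scr{N} \cc \scr{M}$.

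Next I would apply the duality lemma to $\scr{N} \cc \scr{M}$ (both contractively contained in $\bH ^2 _d$), obtaining the equivalent statement $\scr{H} \cc \scr{H} _0$, where $\scr{H} _0$ denotes the complementary space of $\scr{N}$. The heart of the argument is computing $\scr{H} _0$. Since $\scr{N} = \ran{(R (P \otimes I_d ) R^* )^{1/2}}$, its complement is $\ran{(I - R(P \otimes I_d) R^* )^{1/2}}$, and the isometry identity
$$ I - R ( P \otimes I_d ) R^* = P_{\C 1} \; + \; R \big( (I-P) \otimes I_d \big) R^* $$
exhibits this operator as a sum of two positive operators with orthogonal ranges $\C 1$ and $\{ 1 \} ^\perp$. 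Hence $\scr{H} _0 = \C 1 \oplus R ( \scr{H} \otimes \C ^d )$, where $\C 1$ carries the ambient $\bH ^2 _d$ norm and $R ( \scr{H} \otimes \C ^d )$ carries the $R-$pushforward of the $\scr{H} \otimes \C ^d$ norm.

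Finally I would unpack $\scr{H} \cc \scr{H} _0$. For $h \in \scr{H}$, split $h = h(0) \cdot 1 + R ( R^* h )$, using $h - h(0) 1 \in \{ 1 \} ^\perp = \ran{R}$ and $R^* 1 = 0$. Membership $h \in \scr{H} _0$ then forces $R^* h \in \scr{H} \otimes \C ^d$, that is $R_k ^* h \in \scr{H}$ for all $k$, which is precisely $R-$coinvariance of $\scr{H}$; and since $R$ is isometric with $\| 1 \| _{\bH ^2 _d} = 1$, the complementary norm evaluates to $\| h \| ^2 _{\scr{H} _0} = | h(0) | ^2 + \sum _k \| R_k ^* h \| ^2 _{\scr{H}}$. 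Contractivity of the embedding, $\| h \| ^2 _{\scr{H} _0} \leq \| h \| ^2 _{\scr{H}}$, is therefore exactly the inequality for NC difference quotients $\sum _k \| R_k ^* h \| ^2 _{\scr{H}} \leq \| h \| ^2 _{\scr{H}} - | h(0) | ^2$, closing the chain of equivalences. I expect the main obstacle to be the third step: justifying the complementary-space-under-isometry computation with the correct norms, in particular that the constant direction $\C 1$ splits off orthogonally and contributes precisely the $| h(0) | ^2$ term matching the difference-quotient correction. The orthogonality of the two ranges in the displayed operator identity is what makes this decoupling clean, and it is the reason the otherwise awkward $-| h(0) | ^2$ term appears on the nose.
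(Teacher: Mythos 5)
Your proof is correct, but it takes a genuinely different route from the paper's. The paper works directly with the variational definition $\| h \| ^2 _{\scr{H}} = \sup _m ( \| h+m \| ^2 _{\bH ^2} - \| m \| ^2 _{\scr{M}} )$: it restricts the supremum to vectors of the form $m = R\mbf{m}$ (legitimate by $R$-invariance), uses the row-contraction bound $\| R\mbf{m} \| _{\scr{M}} \le \| \mbf{m} \| _{\scr{M} \otimes \C ^d}$ and the identity $\| h \| ^2 = \| R^* h \| ^2 + |h(0)|^2$, and recognizes the resulting supremum as $\| R^* h \| ^2 _{\scr{H} \otimes \C ^d} + |h(0)|^2$; the converse is asserted to be "similarly easy." You instead encode the entire left-hand condition as a single contractive containment $\scr{N} := R(\scr{M} \otimes \C ^d) \cc \scr{M}$, pass through the duality lemma, and reduce the whole statement to the operator identity $I - R(P \otimes I_d)R^* = P_{\C 1} + R((I-P)\otimes I_d)R^*$ together with the orthogonal-range splitting of operator-range spaces. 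Your version buys a genuinely two-sided argument — every step is an equivalence, so the converse direction comes for free rather than being left to the reader — and it isolates exactly where the $|h(0)|^2$ correction comes from (the defect $I - RR^* = P_{\C 1}$ of the row isometry). The cost is that you lean on the operator-range representation $\scr{M} = \ran{P^{1/2}}$, on the duality lemma (which the paper states without proof), and on the standard but unproved fact that the complement of a sum of positive contractions with orthogonal ranges decomposes orthogonally with the pushforward norms; the paper's computation, by contrast, needs nothing beyond the supremum definition. Both arguments are sound; if you write yours up, do include the short verification that $(A+B)^{1/2} = A^{1/2} + B^{1/2}$ and $\ran{(A+B)^{1/2}} = \ran{A^{1/2}} \oplus \ran{B^{1/2}}$ isometrically when $\ran{A} \perp \ran{B}$, since that is the one step you flag but do not fully discharge.
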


\begin{proof}
    Given such a $\scr{M}$, suppose $h \in \scr{H}$. Then,
\ba \infty & > & \| h \| ^2 _{\scr{H}} = \sup _{m \in \scr{M}} \left( \| h + m \| ^2 _{\bH ^2} - \| m \| ^2 _\scr{M} \right) \nn \\
& \geq & \sup _{\mbf{m}} \left( \| h + R \mbf{m} \| ^2 _{\bH ^2} - \| R \mbf{m} \| ^2 _{\scr{M}} \right) \nn \\
& \geq & \sup \left( \| h \| ^2 _{\bH ^2}   - 2 \re{\ip{R^* h}{\mbf{m}} _{\bH ^2}} + \| \mbf{m} \| ^2 _{\bH ^2} - \| \mbf{m} \| ^2 _\scr{M} \right) \nn \\
& = & \sup \left( \| R^* h \| ^2 _{\bH ^2} + | h(0) | ^2 - 2\re{\ip{R^*h}{\mbf{m}}_{\bH ^2}} + \| \mbf{m} \| ^2 _{\bH ^2} - \| \mbf{m} \| ^2 _\scr{M} \right) \nn \\ 
& = & \sup _\mbf{m} \left( \| R^* h + \mbf{m} \| ^2 _{\bH ^2} - \| \mbf{m} \| _\scr{M} ^2 + | h (0) | ^2 \right) \nn \\
& = & \| R^* h \| ^2 _{\scr{H}} + |h (0) | ^2. \nn \ea 
This proves that $\scr{H}$ is $R-$coinvariant and obeys the inequality for NC difference quotients. The converse is similarly easy to verify. 
\end{proof}

\subsection*{Right ideals}

Davidson and Pitts in \cite{DP-alg} proved that there is a lattice isomorphism between the lattice of weak operator topology (WOT)-closed right ideals of $\mult$ and the closed, $R-$invariant subspaces of $\hardy$. The correspondence is given by $J \mapsto \overline{J \cdot 1}$. The inverse map is $\iota(\sM) = \left\{ G(L) \in \mult \mid G(L) \cdot 1 \in \M \right\}$. 

Given a row $F(L) \colon \bH ^2_d \otimes \J \to \bH ^2_d$, we define $J_F$ to be the algebraic ideal generated by the entries of $F(L)$. We will denote by $\overline{J_F}$ the norm closure of $J_F$, i.e, the norm closed right ideal generated by the entries of $F(L)$.

\begin{prop}
Let $F(L) \colon \bH ^2_d \otimes \J \to \bH ^2_d$ and $G(L) \colon \bH ^2_d \otimes \K \to \bH ^2_d$ be two rows. If $\ran{F(L)} \subset \ran{G(L}$, then $\overline{J_F} \subset \overline{J_G}$. Conversely, if $J_F \subset J_G$, then $\ran{F(L)} = \ran{G(L)}$.
\end{prop}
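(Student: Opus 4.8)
The plan is to read both implications through the NC Douglas Factorization property (the first theorem of Section~3), with an elementary bridge between range spaces and generated right ideals doing the bookkeeping. Write $F(L)=[\,F_1(L),F_2(L),\ldots\,]$ and $G(L)=[\,G_1(L),G_2(L),\ldots\,]$ for the entries relative to fixed orthonormal bases $\{e_i\}$ of $\J$ and $\{f_j\}$ of $\K$, so that each $F_i(L),G_j(L)\in\mult$ and $J_F,J_G$ are the right ideals they generate. The key preliminary observation is that evaluation at the vacuum vector identifies the two pictures: since $\mult\cdot 1$ contains the NC polynomials and is therefore dense in $\bH ^2 _d$, and each entry acts boundedly, one has $\overline{J_F\cdot 1}=\overline{\ran{F(L)}}$, the closed $R$-invariant subspace underlying $\scr M ^L (F)$. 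This is the point of contact with the Davidson--Pitts correspondence $J\mapsto\overline{J\cdot 1}$ and explains why the two halves of the proposition are `converse' to one another; however the actual inclusions will come from factorization rather than from the lattice isomorphism, because evaluation at $1$ is contractive but not bounded below and so cannot by itself pass from subspace containment to norm-closed ideal containment.

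For the first implication, assume $\ran{F(L)}\subseteq\ran{G(L)}$. By the NC Douglas Factorization property there is a left multiplier $H(L)\in\bH ^\infty _d\otimes\L(\J,\K)$ with $F(L)=G(L)H(L)$. Reading this entrywise against the bases gives $F_i(L)=\sum_j G_j(L)H_{ji}(L)$, where $H_{ji}(L)\in\mult$ are the entries of $H(L)$. Every partial sum lies in $J_G$, so $F_i(L)\in\overline{J_G}$; since the $F_i(L)$ generate $J_F$ and $\overline{J_G}$ is a norm-closed right ideal, this yields $J_F\subseteq\overline{J_G}$ and hence $\overline{J_F}\subseteq\overline{J_G}$. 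When the rows are finite the sums terminate and one obtains the cleaner $J_F\subseteq J_G$; the norm closure in the statement is exactly what absorbs the tail when $\K$ is infinite-dimensional, and convergence of these tails is the one technical point to watch.

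For the converse, assume $J_F\subseteq J_G$. Then each generator satisfies $F_i(L)\in J_G$, i.e. $F_i(L)=\sum_j G_j(L)A_{ji}(L)$ as a finite sum with $A_{ji}(L)\in\mult$. I would assemble these into $A(L)\colon\bH ^2 _d\otimes\J\to\bH ^2 _d\otimes\K$, $A(L)(g\otimes e_i)=\sum_j A_{ji}(L)g\otimes f_j$, verify that $A(L)$ is a genuine bounded left multiplier (for finite rows it is a finite matrix of multipliers, hence an element of $\bH ^\infty _d\otimes\L(\J,\K)$), and check $F(L)=G(L)A(L)$ on elementary tensors. Then $\ran{F(L)}=\ran{G(L)A(L)}\subseteq\ran{G(L)}$. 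This is the genuine content of the converse: algebraic (unclosed) ideal containment produces an exact, bounded factorization and hence exact range containment, whereas the closed condition $\ran{F(L)}\subseteq\ran{G(L)}$ only returned ideal containment up to closure. I expect the main obstacle to be precisely this boundedness bookkeeping — confirming that the coefficient operators extracted from ideal membership assemble into a bounded multiplier $A(L)$, and dually that the Douglas factors in part one converge into the norm closure of $J_G$; the algebraic factorization itself is formal. Finally, the displayed equality $\ran{F(L)}=\ran{G(L)}$ requires a symmetric hypothesis $J_F=J_G$, in which case the construction applied in both directions gives the two inclusions; the one-sided hypothesis $J_F\subseteq J_G$ yields only the inclusion $\ran{F(L)}\subseteq\ran{G(L)}$, so I would treat that inclusion as the content being proved here.
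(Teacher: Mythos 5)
Your overall strategy is exactly the paper's: the forward direction is the NC Douglas factorization $F(L)=G(L)H(L)$ read entrywise, and the converse assembles the coefficients of the ideal membership $F_j(L)=\sum_i G_i(L)A_{ij}(L)$ into an operator matrix $A(L)$ with $F(L)=G(L)A(L)$. You are also right that the proof only delivers $\ran{F(L)}\subseteq\ran{G(L)}$ from the one-sided hypothesis $J_F\subseteq J_G$; the ``$=$'' in the statement should be read as an inclusion (or as requiring $J_F=J_G$), and the paper's own proof establishes precisely the inclusion.

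The one genuine gap is the point you yourself flag and then leave open: the boundedness of $A(L)$ when $\J$ is infinite-dimensional. Ideal membership gives you, for each $j$, a \emph{finite} column $(A_{ij}(L))_i$, but nothing controls the norms of these columns as $j$ varies, so the naively assembled matrix $[A_{ij}(L)]$ need not define a bounded operator $\bH^2_d\otimes\J\to\bH^2_d\otimes\K$. The paper's fix is a rescaling trick: replace $F_j(L)$ by $2^{-j}F_j(L)$ (suitably normalized), which changes neither $J_F$ nor $\ran{F(L)}$ but multiplies the $j$-th column of $A(L)$ by the same factor; since each column has only finitely many nonzero entries, the rescaled columns have square-summable norms and $A(L)$ becomes bounded, after which $F(L)=G(L)A(L)$ gives the range inclusion. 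Without some such device your converse only works for finite rows, as you note. Your other worry --- that in the forward direction the partial sums $\sum_{j\le N}G_j(L)H_{ji}(L)$ may converge to $F_i(L)$ only strongly rather than in norm, so that membership in the \emph{norm}-closed ideal $\overline{J_G}$ needs justification when $\K$ is infinite-dimensional --- is legitimate, but the paper asserts the same conclusion with no additional argument, so on that point you are no worse off than the source.
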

\begin{proof}
Since $\ran{G(L)} \subset \ran{F(L)}$, by the Douglas factorization lemma, we have that $F(L) = G(L) B(L)$, for some operator-valued multiplier $B(L)$. However, this implies that the entries of $F(L)$ are elements of $\overline{J_G}$.

Conversely, Let us write $n = \dim{\J}$ and $m = \dim{\K}$. If $J_F \subset J_G$, then, for every $1 \leq j \leq n$, there exist $\{A_{ij}(L)\}_{i=1}^m \subset \mult$, all but finitely many of which are zero, such that 
\[
F_j(L) = \sum_{i = 1}^m G_i(L) A_{ij}(L).
\]
Since multiplying entries of $F(L)$ by scalars won't change either $J_F$ or $\ran{F(L)}$, we can multiply each $F_j(L)$ by $\frac{1}{2^j}$. This will lead to multiplication of the $A_{ij}(L)$ by the same factor. Thus, the operator matrix (perhaps infinite) $A(L) = \left[ A_{ij}(L) \right]$ defines a bounded operator from $\bH ^2_d \otimes \J$ to $\bH ^2_d \otimes \K$. Furthermore, $F(L) = G(L) A(L)$ and we conclude that $\ran{F(L)} \subset \ran{G(L)}$.
\end{proof}

\begin{remark}
Note that, if $n = \dim{\J}, m = \dim{\K} < \infty$, then $\ran{F(L)} \subset \ran{G(L)}$ implies that there exists an finite matrix $A(L)$, such that $F(L) = G(L) A(L)$. Therefore, $J_F \subset J_G$.
\end{remark}

\section{Lattice operations on CPNC kernels} \label{sec:lattice_kernels}

The results of this section are inspired by \cite{Aron-RKHS,GheOkut}. One can define two lattice operations, $\vee, \wedge$ on NC reproducing kernel Hilbert spaces on the same NC set as follows:

\begin{defn}
Let $K, k$ be two CPNC kernels defined on the same NC set. $\H _{nc} (K) \vee \H _{nc} (k)$, or $\H _{nc} (K \vee k)$, is the NC-RKHS corresponding to the CPNC kernel:
$$ (K\vee k ) (Z, W) := K(Z,W) + k (Z,W). $$ 
The NC-RKHS $\H _{nc} (K ) \wedge \H _{nc} (k) = \H _{nc} (K \wedge k)$ is then defined as 
$$ \H _{nc} (K \wedge k) := \H _{nc} (K) \bigcap \H _{nc} (k), $$ with the norm:
$$ \| \cdot \| ^2 _{K \wedge k} := \| \cdot \| ^2 _K + \| \cdot \| ^2 _k. $$ 
\end{defn}
Observe that $\H _{nc} (K \wedge k)$ embeds contractively in both $\H _{nc} (K)$ and $\H _{nc} (k)$.

\begin{thm} \label{sumintrkhs}
Define isometries $U _\vee , U _\wedge : \H _{nc} (K \vee k), \H _{nc} (K \wedge k) \rightarrow \H _{nc} (K) \oplus \H _{nc} (k)$ by:
$$ U_\vee (K\vee k)_Z := K_Z \oplus k_Z, \quad \mbox{and} $$ $$ U_\wedge h = h \oplus -h. $$ Then 
$$ \H _{nc} (K) \oplus \H _{nc} (k) = U_\vee \H _{nc} (K \vee k) \oplus U_\wedge \H _{nc} (K \wedge k), \quad \mbox{and}$$  
$$ (K \wedge k )_Z = \frac{1}{2} U_\wedge ^* (K_Z \oplus -k_z) =  U_\wedge ^* (K_Z \oplus 0) =  U_\wedge ^* (0 \oplus -k_Z). $$ 
\end{thm}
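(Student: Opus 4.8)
The plan is to read Theorem~\ref{sumintrkhs} as the non-commutative counterpart of Aronszajn's description of sums and intersections of reproducing kernels, and to verify every assertion on the dense spans of kernel vectors, using the defining inner-product formula $\ip{K_Z(yv^*)}{K_W(xu^*)}_K = \ipcn{y}{K(Z,W)[vu^*]x}$ together with the reproducing identity $\ip{K_Z(\alpha)}{h}_K = \tr(\alpha^* h(Z))$ (and the analogues for $k$, $K\vee k$, and $K\wedge k$).

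First I would check that $U_\vee$ and $U_\wedge$ are isometries. For $U_\wedge$ this is immediate from the definition of the intersection norm, since $\|U_\wedge h\|^2 = \|h\|^2_K + \|h\|^2_k = \|h\|^2_{K\wedge k}$. For $U_\vee$, I compute on kernel vectors: writing $\alpha = yv^*$, $\beta = xu^*$ and using additivity of the kernels,
\[
\ip{U_\vee (K\vee k)_Z(\alpha)}{U_\vee (K\vee k)_W(\beta)} = \ip{K_Z(\alpha)}{K_W(\beta)}_K + \ip{k_Z(\alpha)}{k_W(\beta)}_k = \ipcn{y}{(K+k)(Z,W)[vu^*]x},
\]
and the right-hand side is exactly $\ip{(K\vee k)_Z(\alpha)}{(K\vee k)_W(\beta)}_{K\vee k}$. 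Because the kernel vectors span a dense subspace of $\H _{nc}(K\vee k)$, this simultaneously shows that $U_\vee$ is well defined on that span and that it extends to an isometry.

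Next I would establish the orthogonal decomposition. The heart is the orthogonality $U_\vee \H _{nc}(K\vee k) \perp U_\wedge \H _{nc}(K\wedge k)$: for $h \in \H _{nc}(K\wedge k)$,
\[
\ip{K_Z(\alpha)\oplus k_Z(\alpha)}{h\oplus(-h)} = \ip{K_Z(\alpha)}{h}_K - \ip{k_Z(\alpha)}{h}_k = \tr(\alpha^* h(Z)) - \tr(\alpha^* h(Z)) = 0,
\]
the cancellation relying on the fact that $h(Z)$ is the same matrix whether $h$ is evaluated in $\H _{nc}(K)$ or in $\H _{nc}(k)$, since an element of the intersection is literally a single NC function. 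To see that the two ranges span everything, I would show their joint orthogonal complement is trivial. If $(f,g)$ is orthogonal to $\ran{U_\vee}$, then $\tr\big(\alpha^*(f(Z)+g(Z))\big) = 0$ for all $Z$ and all $\alpha$, forcing $f(Z)+g(Z)=0$ at every point; hence $g=-f$ as NC functions and $f \in \H _{nc}(K)\cap\H _{nc}(k) = \H _{nc}(K\wedge k)$, so $(f,g) = U_\wedge f \in \ran{U_\wedge}$. If moreover $(f,g)\perp\ran{U_\wedge}$, then $(f,g)\perp(f,g)$ and $(f,g)=0$. As both ranges are closed (ranges of isometries) and mutually orthogonal, their sum is a closed subspace with trivial orthogonal complement, giving the claimed decomposition.

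Finally, for the kernel-vector formulas I would compute $U_\wedge^*$ directly against an arbitrary $h \in \H _{nc}(K\wedge k)$. From $\ip{U_\wedge^*(f\oplus g)}{h}_{K\wedge k} = \ip{f}{h}_K - \ip{g}{h}_k$, each of the three inputs $K_Z(\alpha)\oplus 0$, $0\oplus(-k_Z(\alpha))$, and $\tfrac12\big(K_Z(\alpha)\oplus(-k_Z(\alpha))\big)$ produces the single functional $h \mapsto \tr(\alpha^* h(Z))$. Since $\H _{nc}(K\wedge k)$ is itself an NC-RKHS — it is contractively contained in $\H _{nc}(K)$, as noted just before the theorem, so its point evaluations are bounded — this functional is represented by its kernel vector $(K\wedge k)_Z(\alpha)$, and all three identities follow at once.

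I expect the main obstacle to be the spanning step, specifically making precise that orthogonality to $\ran{U_\vee}$ forces the antidiagonal relation $g=-f$ with $f$ lying in the intersection. The essential non-commutative input is that elements of an NC-RKHS are genuine NC functions on the NC set, so that pointwise vanishing $f(Z)+g(Z)=0$ for all $Z$ is equivalent to the vector identity $f = -g$; this is exactly what fuses the $\vee$-piece and the $\wedge$-piece into an honest orthogonal direct sum.
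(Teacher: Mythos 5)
Your proposal is correct and follows essentially the same route as the paper: the same orthogonality computation $\ip{K_Z\oplus k_Z}{h\oplus(-h)}=h(Z)-h(Z)=0$, the same identification of $\ran{U_\vee}^\perp$ with the antidiagonal copy of the intersection via pointwise vanishing of $f(Z)+g(Z)$, and the same adjoint computation for the kernel-vector identities (you evaluate $U_\wedge^*$ against all three inputs at once, where the paper derives the $\tfrac12(K_Z\oplus -k_Z)$ formula first and obtains the other two from the projection identity $K_Z\oplus 0=\tfrac12(K_Z\oplus k_Z)+\tfrac12(K_Z\oplus -k_Z)$, but this is a cosmetic difference). Your explicit verification that $U_\vee$ is an isometry on the dense span of kernel vectors fills in a step the paper leaves as ``easy to verify.''
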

\begin{proof}
The claims are easy to verify. In particular, it is clear that $\ran{U_\vee} \subseteq \ran{U_\wedge} ^\perp$. Conversely if $f \oplus g  \perp \ran{U_\vee}$ then for all $Z$,
\ba 0 & = & \ip{K_Z \oplus k_Z}{f \oplus g} \nn \\
& = & f(Z) + g(Z). \nn \ea This proves that $f(Z) = - g(Z)$ so that $f\oplus g = f \oplus -f \in \ran{U_\wedge}$, and we conclude that $\ran{U_\vee} ^\perp = \ran{U_\wedge} $. 
To prove the identity for $(K \wedge k) _Z$, observe that for any $h \in \H _{nc} (K\wedge k)$, 
\ba h(Z) & = & \left( (K \wedge k) _Z \right)^* h \nn \\
& = & \frac{1}{2} \left( K_Z \oplus -k_Z \right) ^* (h \oplus -h) \nn \\
& = & \frac{1}{2} \left( K_Z \oplus k_z \right) ^* U_\wedge h \nn \\
& = & \frac{1}{2} \left(  U_\wedge ^* (K_Z \oplus k_z)  \right) ^* h. \nn \ea 
Furthermore,
$$ K_Z \oplus 0 = \frac{1}{2} (K_Z \oplus k_Z ) + \frac{1}{2} (K_Z \oplus - k_Z), $$ so that
$$ P_\wedge (K_Z \oplus 0) = \frac{1}{2} P_\wedge (K_Z \oplus -k_Z), $$ and 
$$ U_\wedge ^* (K_Z \oplus 0) = \frac{1}{2} U_\wedge ^* (K_Z \oplus -k_Z) = (K\wedge k)_Z. $$ 
\end{proof}

\section{Lattice operations on NC left multipliers} \label{sec:lattice_mult}

\begin{cor}
Given operator-valued left NC multipliers, $F (L) \in \bH ^\infty _d \otimes \mc{L} (\mc{F} , \J) $ and 
$G(L) \in \bH ^\infty _d \otimes \mc{L} (\mc{G} , \J ), $ with the same coefficient range space, $\J$, let $\scr{M} := \scr{M} ^L (F) \wedge \scr{M} ^L (G) $. Then there is an operator-valued left multiplier $H(L) \in \bH ^\infty _d \otimes \L (\H, \J )$ with $\mr{dim} (\cH ) \leq \mr{dim} (\cF) + \mr{dim} (\cG)$ so that $\scr{M} = \scr{M} ^L (H)$,
$$ \ran{H(L)} = \ran{F(L)} \bigcap \ran{G(L)}, $$ and $\| H(L) \| \leq \max \{ \| F(L) \| , \| G( L) \| \} $.
\end{cor}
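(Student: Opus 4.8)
The plan is to recognize $\scr{M} := \scr{M} ^L (F) \wedge \scr{M} ^L (G)$ as a space to which the NC de Branges--Beurling Theorem (Theorem \ref{NCdBB}) applies, and then to pin down the coefficient dimension and the norm of the resulting multiplier by comparison with the direct--sum multiplier $F \oplus G$ and the isometry $U_\wedge$ of Theorem \ref{sumintrkhs}. First I would verify the three hypotheses of Theorem \ref{NCdBB} for $\scr{M}$, recalling that on $\scr{M}$ the norm is $\| \cdot \| ^2 _{\wedge} = \| \cdot \| ^2 _F + \| \cdot \| ^2 _G$. For bounded containment, if $\mr{e} : \scr{M} \hookrightarrow \bH ^2 _d \otimes \J$ is the embedding and $\mr{e} _F : \scr{M} ^L (F) \hookrightarrow \bH ^2 _d \otimes \J$ is the embedding of $\scr{M} ^L (F)$ (so that $\| \mr{e} _F \| = \| F(L) \|$ by Theorem \ref{NCdBB}), then for $h \in \scr{M}$ one has $\| \mr{e} h \| ^2 \leq \| F(L) \| ^2 \| h \| ^2 _F \leq \| F(L) \| ^2 \| h \| ^2 _\wedge$, and symmetrically with $G$; hence $\| \mr{e} \| \leq \min \{ \| F(L) \| , \| G(L) \| \} \leq \max \{ \| F(L) \| , \| G(L) \| \}$. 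Since $\scr{M}$ is an intersection of two $R \otimes I_\J$--invariant subspaces it is $R \otimes I_\J$--invariant, and for $\mbf{h} \in \scr{M} \otimes \C ^d$ the identity $\| X \mbf{h} \| ^2 _\wedge = \| X_F \mbf{h} \| ^2 _F + \| X_G \mbf{h} \| ^2 _G$ together with Lemma \ref{rowcontract} (which gives that $X_F , X_G$ are row contractions) shows $X := (R \otimes I_\J) | _\scr{M}$ is a row contraction.

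Applying Theorem \ref{NCdBB} now produces a left multiplier $H(L) \in \bH ^\infty _d \otimes \L (\cH , \J)$ with $\scr{M} = \scr{M} ^L (H)$ isometrically and $\| H(L) \| = \| \mr{e} \| \leq \max \{ \| F(L) \| , \| G(L) \| \}$. The range identity is then immediate, since as sets
$$ \ran{H(L)} = \scr{M} ^L (H) = \scr{M} = \scr{M} ^L (F) \cap \scr{M} ^L (G) = \ran{F(L)} \cap \ran{G(L)} . $$
Thus the only substantive point left is the coefficient bound $\mr{dim} (\cH) \leq \mr{dim} (\cF) + \mr{dim} (\cG)$.

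For the dimension bound I would exploit Theorem \ref{sumintrkhs}. The map $U_\wedge h = h \oplus (-h)$ is an isometry of $\scr{M}$ onto an $R \otimes I_{\J ^2}$--invariant subspace $\scr{N}$ of $\scr{M} ^L (F) \oplus \scr{M} ^L (G)$, which is exactly the operator--range space $\scr{M} ^L (F \oplus G)$ of the block--diagonal multiplier $F \oplus G : \bH ^2 _d \otimes (\cF \oplus \cG) \rightarrow \bH ^2 _d \otimes \J ^2$, whose coefficient space $\cF \oplus \cG$ has dimension $\mr{dim} (\cF) + \mr{dim} (\cG)$. Under $U_\wedge$ the row contraction $X$ is intertwined with $(X_F \oplus X_G) | _\scr{N}$. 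By Remark \ref{ranUF}, $\mr{dim} (\cH)$ equals the multiplicity of the minimal row isometric dilation of $X$, i.e. the dimension of the wandering space of $\bigvee _\alpha (R \otimes I_{\cF \oplus \cG}) ^\alpha \scr{N} '$ inside $\bH ^2 _d \otimes (\cF \oplus \cG)$, where $\scr{N} ' \subseteq \ker{(F \oplus G)(L)} ^\perp$ is the preimage of $\scr{N}$ under the dilation intertwiner $U_{F \oplus G}$ of $\scr{M} ^L (F \oplus G)$. So the bound reduces to showing that this wandering space is dominated by the $(\mr{dim} (\cF) + \mr{dim} (\cG))$--dimensional wandering space of the dilation $R \otimes I_{\cF \oplus \cG}$ of $X_F \oplus X_G$.

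The hard part will be exactly this multiplicity estimate. A priori, an $R$--invariant subspace of $\bH ^2 _d \otimes (\cF \oplus \cG)$ can carry a wandering space strictly larger than $\mr{dim} (\cF \oplus \cG)$, so the bound cannot follow from ambient considerations alone and must use the special anti--diagonal structure of $\scr{N}$ together with the common coefficient range space $\J$. The concrete approach I would take is to compare the defect operator $I_\scr{M} - \sum _k X_k X_k ^*$ (whose rank is the sought multiplicity) with the defects of $X_F$ and $X_G$ transported through $U_\wedge$: writing $X_F \oplus X_G$ in block form relative to $\scr{N} \oplus \scr{N} ^\perp = \scr{M} ^L (F) \oplus \scr{M} ^L (G)$, one gets $I_\scr{M} - \sum_k X_k X_k ^* = [\, I - \sum_k (X_F \oplus X_G)_k (X_F \oplus X_G)_k ^* \,]_{\scr{N}\scr{N}} + \sum_k B_k B_k ^*$, where the first summand has rank at most $\mr{dim} (\cF) + \mr{dim} (\cG)$ and $B_k$ is the off--diagonal block. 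The delicate step is controlling the cross term $\sum_k B_k B_k ^*$, which is nonzero precisely because $\scr{N}$ is only $(X_F \oplus X_G)$--invariant and not reducing; showing that it does not inflate the rank beyond $\mr{dim} (\cF) + \mr{dim} (\cG)$ is where the structure forced by the shared range space $\J$ must be used, and I expect this to be the crux of the argument.
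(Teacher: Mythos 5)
Your verification of the hypotheses of Theorem \ref{NCdBB} for $\scr{M} = \scr{M} ^L (F) \wedge \scr{M} ^L (G)$, the resulting norm bound $\| H(L) \| = \| \mr{e} \| \leq \min \{ \| F(L) \| , \| G(L) \| \}$, and the set-theoretic identity $\ran{H(L)} = \ran{F(L)} \cap \ran{G(L)}$ are all correct, and this is essentially all the paper itself supplies (its proof is the single sentence that the claim is ``immediate'' from the definition of $\wedge$ and the de Branges--Beurling theorem). The genuine gap is the coefficient bound $\dim{\cH} \leq \dim{\cF} + \dim{\cG}$: you rightly observe that it cannot follow from ambient multiplicity alone (an $R$-invariant subspace of $\bH ^2 _d \otimes (\cF \oplus \cG)$ can have wandering dimension exceeding $\dim{\cF} + \dim{\cG}$ when $d \geq 2$, e.g.\ $\bH ^2 _2 \ominus \C 1$ inside $\bH ^2 _2$), but you then only sketch a strategy and explicitly leave its key step open. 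As written, that strategy does not close: rank is subadditive, so in your decomposition $I_{\scr{M}} - \sum_k X_k X_k ^* = [\,\cdot\,]_{\scr{N}\scr{N}} + \sum_k B_k B_k ^*$ the cross term can only \emph{increase} the rank estimate, and no mechanism is offered for bounding $\rank{\sum_k B_k B_k ^*}$; moreover the identification of $\dim{\cH}$ with $\rank{I_{\scr{M}} - XX^*}$ (that the multiplicity of the minimal row isometric dilation of a pure row contraction equals the rank of its $*$-defect) is itself an unproved step in your outline, though it is true and provable via Popescu's Poisson kernel.

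The missing idea is visible in the paper's Theorems \ref{Hid} and \ref{injfactor}: $U_\wedge$ carries $\scr{M}$ onto $\scr{M} ^L \bsm H \\ -H \esm$, whose minimal row isometric dilation is realized, via the inner multiplier $\Ga (L) : \bH ^2 _d \otimes \cH \rightarrow \bH ^2 _d \otimes (\cF \oplus \cG)$, as an $R \otimes I_{\cF \oplus \cG}$-invariant subspace of $\ker{(F \vee G)(L)}$, so that $\dim{\cH}$ is the wandering dimension of $\ran{\Ga (L)}$. The structure that saves the bound is that kernels of left multipliers are not arbitrary invariant subspaces: if $w$ lies in the wandering subspace of $\ker{\Phi (L)}$ for $\Phi (L) : \bH ^2 _d \otimes \mc{E} \rightarrow \bH ^2 _d \otimes \J$ and the constant Taylor coefficient of $w$ vanishes, then $w = \sum_j R_j v_j$ with $\sum_j R_j \Phi (L) v_j = 0$, whence (by orthogonality of the ranges of the $R_j$) each $v_j \in \ker{\Phi (L)}$ and $w \perp w$; thus $w \mapsto \hat{w}_\emptyset$ embeds the wandering subspace into $\mc{E}$ and its dimension is at most $\dim{\mc{E}}$. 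Applied with $\mc{E} = \cF \oplus \cG$ this yields the bound whenever $\ran{\Ga (L)} = \ker{(F \vee G)(L)}$, in particular in the injective case of Theorem \ref{injfactor}; in the general case one must still argue that the wandering subspace of the possibly proper invariant subspace $\ran{\Ga (L)} \subseteq \ker{(F \vee G)(L)}$ admits the same embedding, a point which your writeup --- and, for that matter, the paper's one-line proof --- leaves unaddressed.
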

\begin{proof}
This is immediate by the definition of the NC-RKHS $\scr{M} ^L (F) \wedge \scr{M} ^L (G)$ and the NC de Branges-Beurling theorem. 
\end{proof}

Let $F(L), G(L)$ be operator-valued left NC multipliers with the same coefficient range space, $\J$, as in the statement of the corollary above. Then we can define two new operator-valued left multipliers, $F \vee G$ and $F \wedge G$ with the same coefficient range space, $\J$ as follows.

\begin{defn}
Given $F, G, H$ as above, $F \vee G := (F , G) \in \bH ^\infty _d \otimes \L (\mc{F} \oplus \mc{G}, \J )$ and $F \wedge G := H \in \bH ^\infty _d \otimes \L ( \H, \J )$, where $\scr{M} ^L (H) := \scr{M} ^L (F) \wedge \scr{M} ^L (G)$ as above. 
\end{defn}

\begin{cor} \label{domuvee}
Given $F, G$, $F \vee G$ and $F \wedge G$ as above,
$$\scr{M} ^L (F) \vee \scr{M} ^L (G) = \scr{M} ^L (F \vee G), $$ and 
$$ \scr{M} ^L (F) \wedge \scr{M} ^L (G) = \scr{M} ^L (F \wedge G). $$ In particular,
$$ \scr{M} ^L (F \oplus G) = U _\vee \scr{M} ^L (F \vee G) \oplus U_\wedge \scr{M} ^L (F \wedge G), $$ where the isometries $U_\vee, U _\wedge$ are as defined in Theorem \ref{sumintrkhs}. Moreover, in this case, since $F \vee G = (F, G)$, $U_\vee$ can be defined on the dense set $$ \ran{(F\vee G) (L) (F\vee G) (L) ^* } \subseteq \scr{M} ^L (F \vee G)$$ by:
\ba U_\vee (F \vee G) (L) (F \vee G) (L) ^* x & = & U_\vee \left( F(L) , G(L) \right) \bpm F(L) ^* \\ G(L) ^* \epm x \nn \\
& = & U_\vee (F(L) F(L) ^* x + G(L) G(L) ^* x ) \nn \\
& = & F(L) F(L) ^* x \oplus G(L) G(L) ^* x; \quad x \in \bH ^2 _d \otimes \J.  \nn \ea 
\end{cor}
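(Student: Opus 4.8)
The plan is to reduce all three displayed identities to the kernel-level statements of Theorem \ref{sumintrkhs} together with the definitions of $F \vee G$ and $F \wedge G$, so that the only genuine computation is at the level of CPNC kernels. First I would compute the kernel of the row multiplier $F \vee G = (F,G) \in \bH ^\infty _d \otimes \L (\mc{F} \oplus \mc{G} , \J )$. Writing $(F \vee G)(Z) = (F(Z), G(Z))$ and $(F \vee G)(W) ^* = \bpm F(W) ^* \\ G(W) ^* \epm$, and using that $K(Z,W) [\cdot] \otimes I_{\mc{F} \oplus \mc{G}}$ is block diagonal, the sandwiched product defining $K ^{F \vee G}$ splits:
$$ K ^{F \vee G} (Z,W) = F(Z) K(Z,W) [\cdot] \otimes I_{\mc{F}} F(W) ^* + G(Z) K(Z,W) [\cdot] \otimes I_{\mc{G}} G(W) ^* = K ^F (Z,W) + K ^G (Z,W). $$
By the definition of $\vee$ on CPNC kernels in Section \ref{sec:lattice_kernels}, the right-hand side is $(K ^F \vee K ^G)(Z,W)$, so $\scr{M} ^L (F \vee G) = \H _{nc} (K ^F \vee K ^G) = \scr{M} ^L (F) \vee \scr{M} ^L (G)$, which is the first identity. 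The second identity, $\scr{M} ^L (F) \wedge \scr{M} ^L (G) = \scr{M} ^L (F \wedge G)$, is immediate: by definition $F \wedge G$ is the multiplier $H$ supplied by the preceding corollary with $\scr{M} ^L (H) := \scr{M} ^L (F) \wedge \scr{M} ^L (G)$.

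For the ``in particular'' decomposition I would note that $F \oplus G$ is block diagonal, hence its kernel is $K ^{F \oplus G} = K ^F \oplus K ^G$ (as a diagonal $2 \times 2$ operator matrix of kernels), so that $\scr{M} ^L (F \oplus G) = \scr{M} ^L (F) \oplus \scr{M} ^L (G)$ isometrically. Applying Theorem \ref{sumintrkhs} verbatim to the pair $K := K ^F$, $k := K ^G$ gives
$$ \scr{M} ^L (F) \oplus \scr{M} ^L (G) = U_\vee \H _{nc} (K ^F \vee K ^G) \oplus U_\wedge \H _{nc} (K ^F \wedge K ^G), $$
and substituting the two identities just established rewrites the right-hand side as $U_\vee \scr{M} ^L (F \vee G) \oplus U_\wedge \scr{M} ^L (F \wedge G)$, as required.

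The step that needs actual care is the explicit action of $U_\vee$ in the ``moreover'' clause, and I expect this to be the main (though routine) obstacle. The key input is the standard range-space identity that the kernel map of $\scr{M} ^L (F)$ at $W$ is $F(L) F(L) ^* (K_W \otimes I_\J)$, with the analogous formulas for $G$ and for $F \vee G$; each is verified directly from the reproducing property, using that $\ran{F(L) ^*} \subseteq \ker{F(L)} ^\perp$ and that $F(L)$ is a coisometry onto its range space. Since $U_\vee$ is defined on kernel maps by $U_\vee (K ^F \vee K ^G) _W := K ^F _W \oplus K ^G _W$, the asserted formula
$$ U_\vee (F \vee G)(L) (F \vee G)(L) ^* x = F(L) F(L) ^* x \oplus G(L) G(L) ^* x $$
holds whenever $x$ is an NC Szeg\"o kernel vector $(K_W \otimes I_\J)[\cdot]$. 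Finally I would extend this by density and continuity: both sides are bounded maps from $\bH ^2 _d \otimes \J$ into $\scr{M} ^L (F) \oplus \scr{M} ^L (G)$, since $\| (F \vee G)(L) (F \vee G)(L) ^* x \| ^2 _{\scr{M} ^L (F \vee G)} = \ip{(F \vee G)(L) (F \vee G)(L) ^* x}{x} \leq \| (F \vee G)(L) \| ^2 \| x \| ^2$ and likewise for each summand on the right, while the kernel vectors span a dense subset of $\bH ^2 _d \otimes \J$ and $\ran{(F \vee G)(L)(F \vee G)(L) ^*}$ is dense in $\scr{M} ^L (F \vee G)$. A consistency check, $\| F(L) F(L) ^* x \| ^2 _{\scr{M} ^L (F)} = \ip{F(L) F(L) ^* x}{x}$ and its analogues, confirms that the two sides have equal norm, so the formula respects the isometry $U_\vee$ and determines it on the stated dense set.
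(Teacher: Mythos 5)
Your proposal is correct and follows essentially the same route as the paper: the first two identities and the orthogonal decomposition are read off from the kernel identity $K^{F \vee G} = K^F + K^G$, the definition of $F \wedge G$, and Theorem \ref{sumintrkhs}, while the action of $U_\vee$ rests on the kernel-map formula $K^F_Z = F(L) F(L)^* K_Z$ together with $\ran{A^*} \subseteq \ker{A}^\perp$. The only cosmetic difference is that the paper establishes the formula for $U_\vee$ by a direct norm computation on the dense set $\ran{(F\vee G)(L)(F\vee G)(L)^*}$ and then remarks that it agrees with the kernel-map definition, whereas you verify it on Szeg\"{o} kernel vectors first and extend by boundedness and density --- the same two ingredients in the opposite order.
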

\begin{proof}
The only thing to prove is the final statment regarding the action of $U_\vee$. Calculate: For any $x \in \bH ^2 _d \otimes \J$,
\ba 
& & \| F(L) F(L) ^* x \oplus  G(L) G(L) ^* x \| _{F\oplus G} ^2  =  \| F(L) F(L) ^* x \| ^2 _F + \| G(L) G(L) ^* x \| ^2 _G \nn \\
& =& \| P_{\ker{F}} ^\perp F(L) ^* x \| ^2 _{\bH ^2 _d \otimes \mc{F}} + \| P _{\ker{G}} ^\perp G(L) ^* x \| ^2 _{\bH ^2 _d \otimes \mc{G}} \nn \\
& =& \|  F(L) ^* x \| ^2 _{\bH ^2 _d \otimes \mc{F}} + \|  G(L) ^* x \| ^2 _{\bH ^2 _d \otimes \mc{G}} \quad \quad \mbox{(Since $\ran{A ^* } \subseteq \ker{A} ^\perp$.)}\nn \ea
\ba & = & \left\|  \bpm F(L) ^* \\  G(L) ^* \epm x  \right\| ^2 _{\bH ^2 _d \otimes (\mc{F} \oplus \mc{G} )}  \nn \\
& = & \left\| P _{\ker{ (F,G) }} ^\perp  \bpm F(L) ^* \\  G(L) ^* \epm x  \right\| ^2 _{\bH ^2 _d \otimes (\mc{F} \oplus \mc{G} )} \quad \quad \mbox{(Same reason as above)} \nn \\
& = & \left\| \left( F(L) , G(L) \right)  \bpm F(L) ^* \\  G(L) ^* \epm x  \right\| ^2 _{\scr{M} ^L (F \vee G)}. \nn
\ea
This proves that $U_\vee$ is an isometry, and it agrees with the previous definition of $U_\vee$ since the kernel maps for $\scr{M} ^L (F)$ are:
$$ K^F _Z = F(L) F(L) ^* K_Z, $$ where $K$ denotes the CPNC Szeg\"{o} kernel for the NC Hardy space. Also note that for any bounded linear map between Hilbert spaces, $\ran{A^*}$ is dense in $\ker{A} ^\perp$.
\end{proof}

\begin{remark}
In particular, recall that 
$$ U_\vee \left( F(L), G(L) \right) \bpm F(L) ^* \\ G(L) ^* \epm x = \bpm F(L) F(L) ^* x \\ G(L) G(L) ^* x \epm. $$
Hence 
\ba  \bpm X_F ^* & 0 \\ 0 & X_G ^* \epm \bpm F(L) F(L) ^* x \\ G(L) G(L) ^* x \epm 
& = &  \bpm F(L) (R ^*  \otimes I_\mc{F} ) F(L) ^* x \\ G(L) (R ^* \otimes I_{\mc{G}} ) G(L) ^*x  \epm \nn \\
& = & \bpm F(L) F(L) ^* R ^* \otimes I_{\mc{J}} x \\ G(L) G(L) ^* R ^* \otimes I_{\mc{J}} x \epm, \nn \ea 
so that $\ran{U_\vee}$ is indeed $X_F \oplus X_G - $co-invariant.
\end{remark}

\begin{remark}
If $H = F \wedge G$, observe that 
$$ U_\wedge H(L) x = H(L) x \oplus -H(L) x, $$ so that 
$$ \ran{U_\wedge} = \scr{M} ^L \bpm H \\ - H \epm, $$ and that this subspace is $X_F \oplus X_G-$invariant. We could have instead defined
$$ U_\vee \left( F(L) , G(L) \right) \bpm F(L) ^* \\ G(L) ^* \epm x := F(L) F(L) ^* x \oplus - G(L) G(L) ^* x, $$ and $ U_\wedge h := h \oplus h.$ In this case we would still have that 
$$ \scr{M} ^L (F \oplus G) = \ran{U_\vee} \oplus \ran{U _\wedge}, $$ and 
$ \ran{U_\wedge} = \scr{M} ^L \bsm H \\ H \esm. $
\end{remark}

\subsection*{The lattice of operator-valued left multipliers}

Consider the set $\bH ^\infty _d \otimes \mc{L} ( \cdot, \cJ )$ of all operator-valued left multipliers with range contained in $\bH ^2 _d \otimes \J$. We define an equivalence relation on $\bH ^\infty _d \otimes \mc{L} (\cdot , \cJ )$ by: $$ F(L) \sim G(L) \quad \mbox{if} \quad F(L) = G(L) C(L), $$ for some invertible operator-valued left multiplier $C(L) \in \bH ^\infty _d \otimes \mc{L} (\mc{F}, \mc{G} )$, where $F (L) \in \bH ^\infty _d \otimes \mc{L} (\mc{F} , \mc{J} )$, and $G (L) \in \bH ^\infty _d \otimes \mc{L} (\mc{G} , \mc{J} )$.

\begin{thm}
The set $ \displaystyle \bH ^\infty _d \otimes \mc{L} ( \cdot, \cJ ) / \sim$ is a bounded general lattice.
\end{thm}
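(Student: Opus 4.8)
The plan is to verify the algebraic lattice axioms for the operations $\vee$ and $\wedge$ directly on the quotient $\bH ^\infty _d \otimes \mc{L} (\cdot , \cJ ) / \sim$, where by a \emph{bounded general lattice} I mean a lattice (a partial order in which every pair has a join and a meet) possessing a greatest element $1$ and a least element $0$. The natural order is $[F] \leq [G]$ precisely when $\scr{M} ^L (F) \subseteq \scr{M} ^L (G)$ as subspaces of $\bH ^2 _d \otimes \J$; by Corollary \ref{domuvee} the join and meet for this order are represented by $F \vee G$ and $F \wedge G$, since $\scr{M} ^L (F \vee G) = \scr{M} ^L (F) + \scr{M} ^L (G)$ contains each summand and sits inside every common upper bound, while $\scr{M} ^L (F \wedge G) = \scr{M} ^L (F) \cap \scr{M} ^L (G)$ is the largest space contained in both. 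Thus the whole content is to show that these representatives are well defined modulo $\sim$ and obey the axioms.

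First I would check that $\vee$ and $\wedge$ descend to $\sim$-classes. For the join this is immediate: if $F = F ' C _1$ and $G = G' C _2$ with $C_1 , C_2$ invertible multipliers, then $F \vee G = (F,G) = (F' , G') (C_1 \oplus C_2)$, and $C_1 \oplus C_2$ is again an invertible multiplier, so $F \vee G \sim F' \vee G'$. For the meet I would invoke Corollary \ref{domuvee}, which identifies $\scr{M} ^L (F \wedge G)$ with $\scr{M} ^L (F) \cap \scr{M} ^L (G)$, reducing well-definedness to the assertion that a multiplier is determined up to $\sim$ by its operator-range space. This last assertion is the key lemma: by Corollary \ref{Multfactor} any such multiplier can be written $F = \mr{e} P _{\scr{M}} U$, and by the uniqueness of the minimal row isometric dilation (Lemma \ref{minunique}) two such presentations differ only in the choice of the intertwining unitary $U$, hence differ by right multiplication by a unitary — and therefore invertible — multiplier.

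With well-definedness in hand, commutativity and associativity follow formally: $F \vee G \sim G \vee F$ and $F \vee (G \vee H) \sim (F \vee G) \vee H$ because the coefficient spaces differ only by a permutation of direct summands, which is a unitary multiplier, while $F \wedge G \sim G \wedge F$ and the associativity of $\wedge$ follow from the symmetry and associativity of intersection together with the key lemma. The absorption laws $F \wedge (F \vee G) \sim F$ and $F \vee (F \wedge G) \sim F$ reduce, again via Corollary \ref{domuvee}, to the set-level identities $\scr{M} ^L (F) \cap ( \scr{M} ^L (F) + \scr{M} ^L (G) ) = \scr{M} ^L (F)$ and $\scr{M} ^L (F) + ( \scr{M} ^L (F) \cap \scr{M} ^L (G) ) = \scr{M} ^L (F)$, after which the key lemma identifies the resulting multiplier with $F$ up to $\sim$.

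The step I expect to be the main obstacle is precisely this last identification, because the kernel operations of Section \ref{sec:lattice_kernels} are \emph{not} idempotent on the nose: the meet doubles the norm (so that $K \wedge K = \tfrac{1}{2} K$) and the join averages it, so idempotency and absorption can only hold \emph{modulo a rescaling of the range-space norm}. The role of the equivalence $\sim$ is exactly to absorb these rescalings, since multiplying a multiplier by a nonzero scalar — or, more generally, by any invertible multiplier — is a $\sim$-equivalence that adjusts the norm of the range space accordingly; care must be taken to represent each class by a multiplier of minimal coefficient dimension so that the required invertible factors are available. Once this is handled, boundedness is straightforward: the greatest element is $1 = [I _{\bH ^2 _d \otimes \J}]$, whose range space is all of $\bH ^2 _d \otimes \J$, and the least element is $0 = [0]$, whose range space is $\{ 0 \}$; the identities $F \vee 0 \sim F$, $F \wedge 1 \sim F$, $F \vee 1 \sim 1$ and $F \wedge 0 \sim 0$ are then verified directly from the definitions.
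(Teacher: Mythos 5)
Your reduction of the whole theorem to set-level identities among range spaces plus a ``key lemma'' (a multiplier is determined up to $\sim$ by its operator-range space) is not the paper's route, and the key lemma is where the gap lies: as stated it is false. Take $F(L) = (I,0) : \bH^2_d \otimes \C^2 \to \bH^2_d$ and $G(L) = I$. Both have operator-range space $\bH^2_d$ with the same norm, but $F = GC$ forces $C = (I,0)$, which is not injective, and $G = FD$ forces $D = \bigl( \begin{smallmatrix} I \\ d_2 \end{smallmatrix} \bigr)$, which is never surjective; since $\sim$ is symmetric, $F \not\sim G$. Your appeal to Corollary \ref{Multfactor} does not repair this: that corollary produces \emph{one} multiplier of the canonical form $\mr{e}\,P_{\scr{M}}U$ with a prescribed range space, and Lemma \ref{minunique} then identifies two such \emph{canonical} presentations up to the unitary multiplier $(U')^*U$; neither says anything about a multiplier whose kernel is larger than necessary. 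The same failure of your key lemma destroys antisymmetry of the order $[F]\le[G] \iff \scr{M}^L(F)\subseteq\scr{M}^L(G)$ on the quotient, so the order-theoretic framing cannot be set up as you describe without first restricting to minimal (canonical) representatives, which is what the paper implicitly does.

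Second, even for canonical representatives the absorption law is not a corollary of uniqueness of minimal dilations, because $\scr{M}^L(F)$ and $\scr{M}^L\bigl(F\wedge(F\vee G)\bigr)$ coincide only as sets with \emph{equivalent}, not equal, norms; the connecting multiplier must therefore be invertible but non-unitary. You correctly flag this (``the meet doubles the norm''), but ``care must be taken'' is not an argument, and this is precisely where the paper does its real work: the bounded embedding $\mr{e} : \scr{M}^L(F) \to \scr{M}^L(T)$ and its bounded inverse intertwine $X_F$ with $X_T$, so by commutant lifting they lift to maps $\hat{\mr{e}}$ and $\hat{\mr{e}}_{-1}$ between the minimal dilation spaces; one sets $C = U_T^*\hat{\mr{e}}U_F$ and $D = U_F^*\hat{\mr{e}}_{-1}U_T$, checks $TC = F$ and $FD = T$ by peeling off the projections, and then verifies $\hat{\mr{e}}_{-1}\hat{\mr{e}} = I$ by testing against the spanning vectors $\hat{X}_F^{\alpha}g$, $g \in \scr{M}^L(F)$, so that $C = D^{-1}$ is genuinely invertible. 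Without this step (or an equivalent construction of the invertible factor), your proposal does not establish the absorption laws in the quotient, and the theorem does not follow.
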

\begin{proof}
We need to check that the set of all operator-valued left multipliers with common coefficient range space $\cJ$ modulo right multiplication by invertible operator-valued left multipliers satisfies the axioms of a bounded general lattice.

First, the commutative laws clearly hold: Given any $F, G$ with common coefficient range space $\cJ$ we have $F \vee G = (F , G) \sim (G,F) = G \vee F$ (via a constant unitary permutation multiplier) and $F \wedge G \sim G \wedge F$. The associative laws are also straightforward: $F \vee (G \vee H) = (F, G\vee H) = (F, G, H) = (F \vee G) \vee H$. Similarly, consider $F \wedge (G \wedge H)$. Then, as vector spaces,
\ba \scr{M} ^L \left( F \wedge (G \wedge H) \right) & = & \scr{M} ^L (F) \bigcap \scr{M} ^L (G \wedge H) \nn \\
& = & \scr{M} ^L (F) \bigcap \scr{M} ^L (G) \bigcap \scr{M} ^L (H) \nn \\
& = & \scr{M} ^L \left( (F \wedge G) \wedge H \right). \nn \ea
Moreover, the norms are the same:
\ba \| h \| ^2 _{F \wedge (G \wedge H )} & = & \| h \| ^2 _F + \| h \| ^2 _{G \wedge H} \nn \\
& = & \| h \| ^2 _F + \| h \| ^2 _G + \| h \| ^2 _H \nn \\
& = & \| h \| ^2 _{(F\wedge G) \wedge H}, \nn \ea
and the associative law for $\wedge$ follows. The absorption laws are less trivial: Consider $F \wedge (F \vee G)$. Again, as vector spaces,
\ba \scr{M} ^L \left( F \wedge (F \vee G) \right) & = & \scr{M} ^L (F) \bigcap \scr{M} ^L (F, G) \nn \\
& = & \scr{M} ^L (F) \bigcap \left( \scr{M} ^L (F) \bigcup \scr{M} ^L (G) \right) \nn \\
& = & \scr{M} ^L (F). \nn \ea 
Now compare the norms: 
\ba \| f \| ^2 _{F} & \leq & \underbrace{\| f \| ^2 _{F} + \| f \| ^2 _{F \vee G} }_{ \| f \| ^2 _{F \wedge (F \vee G)}} \nn \\
& \leq & \| f \| ^2 _{F} + \| f \| ^2 _F = 2 \| f \| ^2 _F, \nn \ea since $\scr{M} ^L (F) \cc \scr{M} ^L (F,G)$. By the Douglas factorization property, it follows that if $T(L) := \left( F \wedge (F \vee G) \right) (L)$, that there is a contractive operator-valued left multiplier $C(L)$, and a bounded operator-valued left multiplier $D(L)$ of norm at most $\sqrt{2}$ so that 
$$ F(L) = T(L) C(L), \quad \mbox{and} \quad T(L) = F(L) D(L). $$ One can further check that $C(L) = D(L) ^{-1}$, so that $F \sim F \wedge (F \vee G)$. To see that $C(L) = D(L) ^{-1}$, consider the factorization from Corollary \ref{Multfactor}: We have that
$$ F(L) = \mr{e} _F P_F U_F, \quad \mbox{and} \quad T(L) = \mr{e} _T P_T U_T.$$ Here, $\mr{e} _F : \scr{M} ^L (F ) \hookrightarrow \bH ^2 _d \otimes \cJ$ is the bounded embedding with $\| \mr{e} _F \| = \| F(L) \|$, $U_F : \bH ^2 _d \otimes \mc{F} \rightarrow \hat{\scr{M}} ^L (F)$ is the unitary intertwining $R \otimes I _{\mc{F}}$ with the minimal row isometric dilation, $\hat{X} _F $ of $X_F := \left. R \otimes I_{\cJ} \right| _{\scr{M} ^L (F)}$, and $P_F : \hat{\scr{M}} ^L (F) \rightarrow \scr{M} ^L (F)$ is orthogonal projection from the space of the minimal row isometric dilation, $\hat{\scr{M} } ^L (F)$ onto $\scr{M} ^L (F)$. 
Since the norms of $\scr{M} ^L (F)$ and $\scr{M} ^L (T)$ are equivalent, we further have that the embedding $\mr{e} : \scr{M} ^L (F) \hookrightarrow \scr{M} ^L (T)$ is contractive and invertible and the inverse embedding $\mr{e} ^{-1} : \scr{M} ^L (T) \hookrightarrow \scr{M} ^L (F)$ has norm at most $\sqrt{2}$. Observe that 
$$ \mr{e} X_F ^\alpha = X_T ^\alpha \mr{e}, \quad \mbox{and} \quad \mr{e} ^{-1} X_T ^\alpha = X_F ^\alpha \mr{e} ^{-1}. $$ By commutant lifting there exist `lifted embeddings' $$ \hat{\mr{e}} : \hat{\scr{M}} ^L (F) \rightarrow \hat{\scr{M}} ^L (T), \quad  \left. \hat{\mr{e}} ^* \right| _{\scr{M} ^L (T)} = \mr{e} ^*, \quad \| \hat{\mr{e}} \| = \| \mr{e} \|, $$ so that 
$$ \hat{\mr{e}} \hat{X} _F ^\alpha = \hat{X} _T ^\alpha \hat{\mr{e}}. $$ Similarly let $\hat{\mr{e}} _{-1}$ be the lift of $\mr{e} ^{-1}$, this has norm at most $\sqrt{2}$. Consider $C := U_T ^* \hat{\mr{e}} U_F$, this intertwines the right free shifts, 
$$ C R ^\alpha \otimes I_{\mc{F}} = R ^\alpha \otimes I _{\mc{T}} C, $$ so that $C =C(L) \in \bH ^\infty _d \otimes \mc{L} (\mc{F} , \mc{T} )$ is contractive. Further observe that 
\ba T(L) C(L) & = & \mr{e} _T P _T U_T U_T ^* \hat{\mr{e}} U_F \nn \\
& = & \mr{e} _T \underbrace{P_T \hat{\mr{e}} P_F}_{= e} U_F \nn \\
& = & \underbrace{\mr{e} _T \mr{e}}_{=\mr{e} _F} P_F U_F = F(L). \nn \ea
Similarly, setting $D(L) := U_F ^* \hat{\mr{e}} _{-1} U_T$ we obtain that $F(L) D(L) = T(L)$. Finally, we claim that $\hat{\mr{e}} _{-1} = \hat{\mr{e}} ^{-1}$ so that $C(L) = D(L) ^{-1}$. Indeed we have that 
$$ \hat{\scr{M}} ^L (F) = \bigvee \hat{X} _F ^\alpha \scr{M} ^L (F), $$ and similarly for $\hat{\scr{M}} ^L (T)$. Hence, it suffices to check that for any $f, g \in \scr{M} ^L (F)$ we have that
$$ \ip{\hat{X} _F ^\beta g}{\hat{\mr{e}} _{-1} \hat{\mr{e}} \hat{X} _F ^\alpha f} =  \ip{\hat{X} _F ^\beta g}{\hat{X} _F ^\alpha f},  $$ where say $\alpha = \beta \ga$. Indeed, in this case
\ba \ip{\hat{X} _F ^\beta g}{\hat{\mr{e}} _{-1} \hat{\mr{e}} \hat{X} _F ^\alpha f} & = & \ip{\underbrace{(\hat{X} _F ^\ga) ^* g}_{\in \scr{M} ^L (F)}}{\hat{\mr{e}}_{-1} \hat{\mr{e}} f } \nn \\
& = & \ip{(\hat{X} _F ^\ga) ^* g}{P_F \hat{\mr{e}}_{-1} \hat{\mr{e}} P_F f} \nn \\
& = & \ip{(\hat{X} _F ^\ga) ^* g}{P_F \hat{\mr{e}}_{-1} P_T \hat{\mr{e}} P_F f} \nn \\
& = & \ip{(\hat{X} _F ^\ga) ^* g}{\mr{e} ^{-1} \mr{e} f} \nn \\
& = & \ip{\hat{X} _F ^\beta g}{\hat{X} _F ^\alpha f}. \nn \ea
Hence $T(L) = F(L) C(L)$ for an invertible left multiplier $C(L) = D(L) ^{-1}$, and this verifies this absorption law for the quotient set. Verification of the second absorption law $ F \vee (F \wedge G)=F$ is similar and omitted. Finally, this is a bounded general lattice since it has a `bottom element', the zero multiplier on $\bH ^2 _d \otimes \J$ which is an identity for the join operation $\vee$, and it has a `top element', the constant identity multiplier $I_{\bH ^2_d} \otimes I_{\cJ}$, which is an identity for the meet operation $\wedge$.
\end{proof}

\subsection*{Factorization of $F\wedge G$}

Let $\mr{e} _{F} , \mr{e} _G$ be the bounded embeddings of $\scr{M} ^L (F), \scr{M} ^L (G)$, respectively, into $\bH ^2 _d \otimes \J$. Similarly, let $X_F := (R \otimes I_\J) | _{\scr{M} ^L (F)}$, and $U_F : \bH ^2 _d \otimes \mc{F} \rightarrow \hat{\scr{M}} ^L (F)$, where $$ \hat{\scr{M}} ^L (F) := \bigvee \hat{X} _F ^\alpha \scr{M} ^L (F), $$ is the Hilbert space of the minimal row-isometric dilation, $\hat{X} _F$, of $X_F$, $P_F : \hat{\scr{M}} ^L (F) \rightarrow \scr{M} ^L (F)$ is orthogonal projection, and 
$ U_F (R ^\alpha  \otimes I  _{\mc{F}} ) = \hat{X} _F ^\alpha U_F, $ so that 
$ F (L) = \mr{e} _F P _F U_F$, as in the proof of the NC de Branges-Beurling theorem. 

\begin{thm} \label{Hid}
Given $F,G$ as above, and $H (L) = F(L) \wedge G(L) \in \bH ^\infty _d \otimes \mc{L} (\mc{H} , \mc{J} )$, there is a left inner $\Ga (L) : \bH ^2 _d \otimes \H \rightarrow \bH ^2 _d \otimes (\mc{F} \oplus \mc{G} )$ with $\ran{\Ga (L)} \subseteq \ker{(F \vee G) (L)}$ so that:
\ba \bpm H(L) \\ - H(L) \epm & = & \bpm F(L) & 0 \\ 0 & G(L) \epm \Ga (L). \ea
\end{thm}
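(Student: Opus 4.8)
The plan is to read off the desired factorization from the orthogonal decomposition of $\scr{M} ^L (F \oplus G)$ in Corollary \ref{domuvee} and then invoke the NC Douglas Factorization property. First I would record that, by the remarks following Corollary \ref{domuvee}, the range space of $\bpm H \\ -H \epm (L)$ is exactly $\ran{U_\wedge} = U_\wedge \scr{M} ^L (F \wedge G)$, and that by Corollary \ref{domuvee} this is an orthogonal direct summand of $\scr{M} ^L (F \oplus G) = \ran{(F \oplus G)(L)}$ (the isometries $U_\vee , U_\wedge$ being those of Theorem \ref{sumintrkhs}). In particular the embedding is isometric and, as sets of functions in $\bH ^2 _d \otimes (\J \oplus \J)$, one has $\ran{\bpm H \\ -H \epm (L)} \subseteq \ran{(F \oplus G)(L)}$. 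Applying the NC Douglas Factorization property to this inclusion yields a unique left multiplier $\Ga (L) \in \bH ^\infty _d \otimes \mc{L} (\mc{H} , \mc{F} \oplus \mc{G})$ with $\bpm H \\ -H \epm (L) = (F \oplus G)(L) \, \Ga (L)$, together with $\ker{\Ga (L)} \subseteq \ker{H(L)}$ and $\ran{\Ga (L)} \subseteq \ker{(F \oplus G)(L)} ^\perp$.

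Writing $\Ga = \bpm \Ga _1 \\ \Ga _2 \epm$ with $\Ga _1 : \bH ^2 _d \otimes \mc{H} \to \bH ^2 _d \otimes \mc{F}$ and $\Ga _2 : \bH ^2 _d \otimes \mc{H} \to \bH ^2 _d \otimes \mc{G}$, the factorization equation reads $F(L) \Ga _1 = H(L)$ and $G(L) \Ga _2 = -H(L)$. The range condition is then immediate: since $(F \vee G)(L) = (F(L) , G(L))$,
\[
(F \vee G)(L) \, \Ga (L) = F(L) \Ga _1 + G(L) \Ga _2 = H(L) - H(L) = 0 ,
\]
so that $\ran{\Ga (L)} \subseteq \ker{(F \vee G)(L)}$, as required.

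The main work is to verify that $\Ga (L)$ is left inner, and here I would exploit that the containment above is \emph{isometric}. Because $\ran{\Ga (L)} \subseteq \ker{(F \oplus G)(L)} ^\perp$, each $\Ga _1 h$ lies in $\ker{F(L)} ^\perp$ and each $\Ga _2 h$ in $\ker{G(L)} ^\perp$; since $F(L)$ restricts to an isometry from $\ker{F(L)} ^\perp$ onto $\scr{M} ^L (F)$ (and likewise for $G(L)$), the identities $F(L) \Ga _1 h = H(L) h$ and $G(L) \Ga _2 h = -H(L) h$ give $\| \Ga _1 h \| = \| H(L) h \| _F$ and $\| \Ga _2 h \| = \| H(L) h \| _G$. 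Hence, by the definition of the $\wedge$-norm and the fact that $H(L)$ is the canonical coisometry onto $\scr{M} ^L (F \wedge G)$,
\[
\| \Ga (L) h \| ^2 = \| H(L) h \| _F ^2 + \| H(L) h \| _G ^2 = \| H(L) h \| ^2 _{F \wedge G} = \| P _{\ker{H(L)}} ^\perp h \| ^2 .
\]
Thus $\Ga (L) ^* \Ga (L) = P _{\ker{H(L)}} ^\perp$ is a projection, i.e.\ $\Ga (L)$ is a partial isometry with initial space $\ker{H(L)} ^\perp$; this is the assertion that $\Ga (L)$ is left inner, and it is a genuine isometry precisely when $H(L)$ is injective.

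I expect the inner step of the previous paragraph to be the crux. The algebraic factorization and the range condition fall out of Douglas factorization essentially for free, whereas the inner property rests on knowing that $\scr{M} ^L \bpm H \\ -H \epm$ sits \emph{isometrically} inside $\scr{M} ^L (F \oplus G)$ (as an orthogonal summand, by Corollary \ref{domuvee}) and on pushing this through the coisometric normalizations of $F(L)$, $G(L)$ and $H(L)$. Should one insist on a genuine isometry in general, an alternative is to replace the Douglas factor by the left inner multiplier furnished by the NC Beurling Theorem applied to the $R$-invariant subspace $((F \oplus G)(L)) ^{-1} [ \ran{U_\wedge} ]$ of $\bH ^2 _d \otimes (\mc{F} \oplus \mc{G})$, at the possible cost of enlarging the coefficient space $\mc{H}$.
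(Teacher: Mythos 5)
Your route---establishing the range inclusion $\ran{\bsm H \\ -H \esm (L)} \subseteq \ran{(F\oplus G)(L)}$, invoking the NC Douglas Factorization property, and then computing norms to get innerness---is genuinely different from the paper's, which instead shows that $\hat{X}_F \oplus \hat{X}_G$ restricted to $\bigvee (\hat{X}_F^\alpha \oplus \hat{X}_G^\alpha)\, \scr{M}^L \bsm H \\ -H \esm$ is the \emph{minimal} row isometric dilation of $X_{\bsm H \\ -H \esm}$ and defines $\Ga := (U_F^* \oplus U_G^*) U_H$ outright, so that innerness is automatic. The crux of your argument, however, rests on a property the Douglas theorem does not supply: you assert that the multiplier $\Ga(L)$ comes with $\ran{\Ga(L)} \subseteq \ker{(F\oplus G)(L)}^\perp$. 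That range condition holds for the Hilbert-space operator produced by the classical Douglas lemma, but the NC statement only guarantees the factorization, $\ker{\Ga(L)} \subseteq \ker{\bsm H \\ -H\esm (L)}$, and the norm formula for the \emph{lifted} multiplier; inspecting its proof, the commutant lift $\Ga(L)$ satisfies only $P_{\ker{(F\oplus G)}}^\perp \Ga(L) = \Ga_0$ (the pre-lift operator) and may acquire a nonzero component into $\ker{(F\oplus G)(L)}$. Without $\Ga_1 h \in \ker{F(L)}^\perp$ and $\Ga_2 h \in \ker{G(L)}^\perp$ your chain of equalities degrades to the one-sided bound $\| \Ga(L) h \|^2 \geq \| P_{\ker{H(L)}}^\perp h\|^2$, which does not make $\Ga(L)$ a partial isometry; and you cannot repair this by compressing with $P_{\ker{(F\oplus G)(L)}}^\perp$, since $\ker{(F\oplus G)(L)}$ is $R$-invariant but not reducing, so the compression is no longer a left multiplier. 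A squeeze via the Douglas norm formula also fails, because the infimum there can exceed $1$ (e.g.\ when $G = 2F$).

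There is a second mismatch even if the above is granted: you would obtain a partial isometry with initial space $\ker{H(L)}^\perp$, whereas ``inner'' in this paper means isometric, and the paper's $\Ga(L) = (U_F^* \oplus U_G^*) U_H$ is a genuine isometry even when $H(L)$ is not injective---it maps $\ker{H(L)}$ isometrically \emph{into} $\ker{(F\oplus G)(L)}$ rather than annihilating it, which is why the auxiliary factor $Q = U_H^* P_\wedge U_H$ can be deleted at the end. Your proposed repair, applying the NC Beurling theorem to $((F\oplus G)(L))^{-1}[\ran{U_\wedge}]$, does not obviously close this gap: that preimage equals $\ker{(F\oplus G)(L)}$ plus the generally non-closed range of the Douglas factor, so it need not be a closed subspace, and applying Beurling to its closure no longer reproduces $\bsm H(L) \\ -H(L) \esm$ exactly. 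The dilation-theoretic construction in the paper sidesteps both issues because every map in the composition intertwines the right free shifts and is isometric by construction.
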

\begin{cor}
If $F,G$ are inner, then so is $H = F \wedge G$.
\end{cor}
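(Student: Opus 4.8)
The plan is to obtain the Corollary directly from the factorization in Theorem \ref{Hid}, tracking isometries through the product; no separate construction is needed. Since $F \wedge G$ is an element of the lattice $\bH^\infty_d \otimes \mc{L}(\cdot,\cJ)/\!\sim$, I read \emph{inner} as the property — stable under the equivalence $\sim$ — that the class contains an isometric multiplier, equivalently that $H(L)$ is a scalar multiple of an isometry; right multiplication by an invertible (in particular scalar) multiplier plainly preserves this property.

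First I would observe that when $F(L)$ and $G(L)$ are inner, the block-diagonal multiplier $\bpm F(L) & 0 \\ 0 & G(L) \epm$ appearing in Theorem \ref{Hid} is an isometry, since $\| F(L) f \|^2 + \| G(L) g \|^2 = \| f \|^2 + \| g \|^2$ for every $f \oplus g$. By the conclusion of Theorem \ref{Hid} the multiplier $\Ga(L)$ is left inner, hence isometric, so the composition $\bpm F(L) & 0 \\ 0 & G(L) \epm \Ga(L)$ is again an isometry. The factorization identifies this composition with $\bpm H(L) \\ -H(L) \epm$, so that $\bpm H(L) \\ -H(L) \epm : \bH^2_d \otimes \H \to \bH^2_d \otimes (\J \oplus \J)$ is an isometry.

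It then remains to read the inner-ness of $H$ off this isometry. Expanding the isometry relation gives $\| H(L) h \|^2 + \| H(L) h \|^2 = \| h \|^2$ for all $h$, that is $H(L)^* H(L) = \tfrac12 I$; as this is a positive scalar multiple of the identity, $H(L)$ is a scalar multiple of an isometry and $H = F \wedge G$ is inner. I expect the one delicate point to be the precise sense of this conclusion: the meet carries the \emph{sum} norm $\| \cdot \|_F^2 + \| \cdot \|_G^2$, which on the isometrically-contained range spaces $\scr{M}^L(F), \scr{M}^L(G)$ doubles the ambient norm, and it is exactly this convention that produces the scalar $\tfrac12$ in $H(L)^* H(L) = \tfrac12 I$ rather than the identity. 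Because the lattice is formed modulo invertible — in particular scalar — multipliers, this normalization is invisible at the level of classes, and $F \wedge G$ is inner precisely as stated.
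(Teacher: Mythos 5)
Your proof is correct and is precisely the argument the paper intends: the corollary is stated without proof exactly because Theorem \ref{Hid} exhibits $\bsm H(L) \\ -H(L) \esm$ as the composition of the isometry $F(L)\oplus G(L)$ (isometric since $F,G$ are inner) with the inner, hence isometric, $\Ga (L)$. Your explicit tracking of the normalization $H(L)^*H(L)=\tfrac{1}{2}I$ coming from the sum norm on $\scr{M}^L(F)\wedge\scr{M}^L(G)$ --- so that $\sqrt{2}\,H(L)$ is the literal isometry and inner-ness of $F\wedge G$ is read at the level of the equivalence class under $\sim$ --- is a careful refinement of a point the paper leaves implicit.
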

\begin{proof}{(of Theorem \ref{Hid})}
Let $\scr{M} := \scr{M} ^L \bsm H \\ - H \esm$.  First observe that $\scr{M} = U_\wedge \scr{M} ^L (H)$. Further observe that $\ran{P_\wedge }$, where $P_{\wedge} = P_{\bsm H \\ - H \esm}$ is $X_F \oplus X_G = \left( R \otimes I_\J \otimes \C ^2 \right) | _{\scr{M} ^L (F \oplus G)} -$invariant, and that 
$$ X _F \oplus X_G | _{\scr{M} ^L \bsm H \\ - H \esm} = X_{\bsm H \\ -H \esm}=: X. $$
It follows that $\hat{X} _F \oplus \hat{X} _G$ is a row isometric dilation of $$X  = \left. (R \otimes I_\J \otimes \C ^2 ) \right| _{\scr{M} ^L \bsm H \\ -H \esm }. $$ 
Indeed, for any $\alpha \in \F ^d$, since $P_\wedge \leq P_F \oplus P_G$,
\ba P_{\wedge} \hat{X} _F ^\alpha \oplus \hat{X} _G ^\alpha P_{\wedge} & = & P_\wedge (P_F \hat{X} _F ^\alpha P_F \oplus P_G \hat{X} _G ^\alpha P_G) P _\wedge \nn \\
& = & P_\wedge (X_F ^\alpha \oplus X_G ^\alpha) P_\wedge \nn \\
& = & X ^\alpha P_\wedge, \nn \ea and this proves that $\hat{X} _F \oplus \hat{X} _G$ is a row isometric dilation of $X$. Setting
$$ \hat{\scr{M}} := \bigvee _{\alpha \in \F ^d} \left( \hat{X} _F ^\alpha \oplus \hat{X} _G ^\alpha  \right) \scr{M} ^L \bpm H \\ - H \epm, $$ this is $\hat{X} _F \oplus \hat{X} _G -$invariant, and 
$$ \hat{X} := \hat{X} _{\bsm H \\ - H \esm } = \hat{X} _F \oplus \hat{X} _G | _{\hat{\scr{M}}}, $$ is the minimal row isometric dilation of $X = X_{\bsm H \\ - H \esm }$. In particular, it follows by \cite[Theorem 2.1]{Pop-dil} (see also Subsection \ref{minimal} and Lemma \ref{minunique}), that $\scr{M} ^L \bsm H \\ -H \esm$ is $\hat{X}-$coinvariant, and 
$$ \hat{X} ^* | _{\scr{M} ^L \bsm H \\ -H \esm} = X^*. $$ 

Since $\hat{X}$ is the minimal row isometric dilation of $X_{\bsm H \\ - H \esm} =X$, we have, by the NC de Branges-Beurling theorem that,
$$ \bpm H (L) \\ - H(L) \epm = \mr{e} _F \oplus \mr{e} _G P _\wedge U_H, $$ where 
$U_H : \bH ^2 _d \otimes \mc{H} \rightarrow \hat{\scr{M}}$ is the unitary intertwining $R \otimes I_\mc{H}$ with $\hat{X}$. This can be factored as follows: Since $P_F \oplus P_G \geq P_\wedge$,
\ba \bpm H (L) \\ - H(L) \epm &= & (\mr{e} _F P_F \oplus \mr{e} _G  P_G ) P _\wedge U_H \nn \\
& = & \underbrace{(\mr{e} _F P_F U_F \oplus \mr{e} _G  P_G U_G )}_{=F(L) \oplus G(L)} \underbrace{U_F ^* \oplus U_G ^* U_H}_{=: \Ga } \underbrace{U_H ^* P_\wedge U_H}_{=:Q}. \nn \ea 
In the above $U_F : \bH ^2 _d \otimes \mc{F} \rightarrow \hat{\scr{M} } ^L (F)$ is the unitary intertwining $R\otimes I_\mc{F}$ with $\hat{X} _F$. Observe that $\Ga : \bH ^2 _d \otimes \mc{H} \rightarrow \bH ^2 _d \otimes (\mc{F} \oplus \mc{G} )$ is an isometry and that
\ba \Ga R^\alpha \otimes I_{\mc{H}} & = & (U_F ^* \oplus U_G ^*) \hat{X} ^\alpha U_H \nn \\
& =& (U_F ^* \oplus U_G ^*) (\hat{X}_F \oplus \hat{X} _G) ^\alpha U_H \nn \\
& = & R^\alpha \otimes I_{\mc{F} \oplus \mc{G}} \Ga, \nn \ea so that $\Ga = \Ga (L)$ is an inner left multiplier.  Further observe that,
\ba \left( F (L), G(L) \right) \Ga (L) & = & \left( I_{\bH^2} \otimes I_\cJ , I _{\bH ^2} \otimes I_\cJ \right) \bpm F(L) & 0 \\ 0 & G(L) \epm U_F ^* \oplus U_G ^* U_H \nn \\
& = & \left( I_{\bH^2} \otimes I_\cJ , I _{\bH ^2} \otimes I_\cJ \right) \bpm \mr{e} _F P _F  & 0 \\ 0 & \mr{e} _G P_G  \epm U_H. \nn \ea 
Since $$ U_H : \bH ^2 _d \otimes \mc{H} \rightarrow \hat{\scr{M}} = \bigvee \hat{X}_F ^\alpha \oplus \hat{X}_G ^\alpha \scr{M} ^L _{\bsm H \\ - H \esm}, $$ it follows that the above becomes:
$$  \left( F (L), G(L) \right) \Ga (L) = ( \mr{e} _F , \mr{e} _G ) \underbrace{(P_F \oplus P_G ) U_H } _{\ran{\cdot} \subseteq \scr{M} ^L \bsm H \\ - H \esm} = 0. $$ This proves that the range of the inner $\Ga (L)$ is contained in the kernel of $(F \vee G) (L) = ( F (L) , G(L) )$.
We further claim that 
$$ \bpm H(L) \\ - H(L) \epm = \bpm F(L) & 0 \\ 0 & G(L) \epm \Ga (L) Q = \bpm F(L) & 0 \\ 0 & G(L) \epm \Ga (L). $$ 
Indeed if $\mbf{h} \in \bH ^2 _d \otimes \mc{H}$ belongs to $\mr{Ker} \bsm H(L) \\ - H(L) \esm $ then $U_H \mbf{h} \in \hat{\scr{M}} \ominus \scr{M} ^L \bsm H \\ - H \esm$ (see Remark \ref{ranUF}) and the range space of $\bsm H \\ -H \esm$ is the range of $P_\wedge$ so that $Q \mbf{h} = U_H ^* P _\wedge U_H \mbf{h} = 0$. Furthermore,
$$ F(L) \oplus G(L) \Ga (L) \mbf{h} = \bpm \mr{e} _F P_F & 0 \\ 0 & \mr{e} _G P_G \epm U_H \mbf{h} =0, $$ since $P_F \oplus P_G \hat{\scr{M}} \ominus \scr{M} =0$. If on the other hand, $\mbf{h} \in \mr{Ker} \bsm H \\ - H \esm ^\perp$ then $Q \mbf{h} = \mbf{h}$ so that the claim holds. 
\end{proof}

\subsection*{A special case: injective multipliers}

In this section we assume that $F \in \bH ^\infty _d \otimes \mc{L} ( \mc{F} , \J )$, and $G \in \bH ^\infty _d \otimes \mc{L} (\mc{G}, \J )$ are injective left multipliers.

\begin{cor} \label{isom}
Given $F \in \bH ^\infty _d \otimes \L ( \mc{F} , \J )$, $\scr{M} ^L (F) $ is such that $X := R \otimes I_\J | _{\scr{M} ^L (F)}$ is a row isometry if and only if $\ker{F}$ is $R \otimes I_\mc{F}-$reducing, and,
$$ \ker{F(L)} = \bH ^2 _d \otimes \mc{F} '; \quad \mc{F} \ominus \mc{F} ' = \{ f \in \mc{F} | \  1 \otimes f \in \ker{F(L)} \}. $$
\end{cor}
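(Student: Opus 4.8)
The plan is to transfer the question about $X = X_F$ into a question about a compression of the right shift to $\cN := \ker{F(L)} ^\perp \subseteq \bH^2_d \otimes \mc{F}$. By the very definition of the norm on $\scr{M} ^L (F)$, the multiplier restricts to a \emph{unitary} $V := F(L)|_\cN : \cN \to \scr{M} ^L (F)$. Since $F(L)$ intertwines the right shifts and $\ker{F(L)}$ is $R \otimes I_\mc{F}$-invariant (being the kernel of an intertwiner of the right free shifts), one has for $x \in \cN$, with $M_k := R_k \otimes I_\mc{F}$, that $X_k V x = F(L) M_k x = F(L) P_\cN M_k x = V\, P_\cN M_k x$, because $M_k x - P_\cN M_k x \in \ker{F(L)}$. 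Hence $V^* X_k V = P_\cN M_k |_\cN =: T_k$, so $X$ is a row isometry if and only if the compression $T = (T_1, \dots, T_d)$ of the right shift to $\cN$ is.

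The core of the argument is then to show that $T$ is a row isometry if and only if $\cN$ is $R \otimes I_\mc{F}$-invariant. Using $M_k^* M_j = \delta_{k,j} I$, a direct computation gives, for $x \in \cN$, the identity $\ip{(I - T_k ^* T_k) x}{x} = \| P_{\cN ^\perp} M_k x \| ^2$. Thus $T_k ^* T_k = I_\cN$ forces $M_k x \in \cN$ for every $x \in \cN$; conversely, if $M_k \cN \subseteq \cN$ for all $k$, then $P_{\cN ^\perp} M_j |_\cN = 0$, which makes the full Gram matrix $[\, T_k ^* T_j \,]$ equal to $[\, \delta_{k,j} I \,]$, so that $T$ is a row isometry. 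Therefore $X$ is a row isometry precisely when $\cN = \ker{F(L)} ^\perp$ is $R \otimes I_\mc{F}$-invariant, i.e. when $\ker{F(L)}$ is $R \otimes I_\mc{F}$-coinvariant; combined with the invariance already noted, this is exactly the assertion that $\ker{F(L)}$ is reducing.

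It remains to identify the shape of $\ker{F(L)}$ once it is reducing. Since $R$ is a pure row isometry, so is $R \otimes I_\mc{F}$, and its wandering subspace is $\ker{(R \otimes I_\mc{F}) ^*} = \C 1 \otimes \mc{F}$. The restriction of $R \otimes I_\mc{F}$ to the reducing subspace $\ker{F(L)}$ is again a pure row isometry, whose wandering subspace is $\ker{F(L)} \cap (\C 1 \otimes \mc{F}) = \C 1 \otimes \mc{F} '$, where $\mc{F} ' := \{ f \in \mc{F} \mid 1 \otimes f \in \ker{F(L)} \}$. By the Wold decomposition for pure row isometries, I would then conclude
\[
\ker{F(L)} = \bigoplus _{\alpha \in \F ^d} (R^\alpha \otimes I_\mc{F})(\C 1 \otimes \mc{F} ') = \bH^2_d \otimes \mc{F} ',
\]
since $\{ R^\alpha 1 \}_{\alpha \in \F ^d}$ is an orthonormal basis of $\bH^2_d$. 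This yields the stated description, and the reverse implication is immediate because $\bH^2_d \otimes \mc{F} '$ is manifestly $R \otimes I_\mc{F}$-reducing.

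I expect the main obstacle to be the middle step: pinning down that the compressed shift $T$ is a row isometry \emph{exactly} when $\cN$ is invariant requires controlling the entire operator matrix $[\,T_k^* T_j\,]$, not merely its diagonal, and the preliminary reduction in the first paragraph must carefully exploit the specific $\scr{M}^L(F)$-norm that turns $F(L)|_\cN$ into a unitary rather than just a contraction. The final structural step is comparatively routine, resting only on the purity of $R \otimes I_\mc{F}$ and the wandering-subspace form of the Wold decomposition recalled in Subsection \ref{minimal}.
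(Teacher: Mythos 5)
Your argument is correct, but it follows a genuinely different route from the paper's. The paper deduces the corollary from Lemma \ref{isolemma} (which produces an injective multiplier $G$ with $\scr{M}^L(F)=\scr{M}^L(G)$ via the minimal dilation), then applies $F(L)F(L)^*=G(L)G(L)^*$, Douglas factorization and commutant lifting to get $F=GC$, $G=FD$ with $CD=I$, and finally invokes NC function theory (the difference--differential calculus and Taylor--Taylor expansions) to show $D(Z)\equiv I\otimes D$ is a \emph{constant} isometry, whence $\ker{F(L)}^\perp=\ran{I\otimes D}$. You instead observe that the defining norm on $\scr{M}^L(F)$ makes $F(L)|_{\ker{F(L)}^\perp}$ a unitary intertwining $X$ with the compression $T=P_{\cN}(R\otimes I_{\mc F})|_{\cN}$, reduce ``$X$ is a row isometry'' to ``$\cN=\ker{F(L)}^\perp$ is $R\otimes I_{\mc F}$-invariant'' by the Gram-matrix computation $\ip{(I-T_k^*T_k)x}{x}=\|P_{\cN^\perp}M_kx\|^2$, and then identify the reducing subspaces of the pure row isometry $R\otimes I_{\mc F}$ by the Wold decomposition. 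Your route is more elementary and self-contained (no Douglas factorization, commutant lifting, or NC-function rigidity needed), and it delivers both directions of the equivalence cleanly, whereas the paper's written proof really only treats the forward implication; what the paper's approach buys is the explicit constant isometry $D$ with $C=D^*$, which is reused elsewhere. One caveat: your conclusion reads $\ker{F(L)}=\bH^2_d\otimes\mc{F}'$ with $\mc{F}'=\{f:1\otimes f\in\ker{F(L)}\}$, while the statement labels that set $\mc{F}\ominus\mc{F}'$; as literally printed the statement is internally inconsistent (it would force $\mc{F}'=\mc{F}\ominus\mc{F}'$), and your version is the coherent reading, consistent with what the paper's own proof actually establishes.
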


\begin{lemma} \label{isolemma} Suppose that $\scr{M}$ is boundedly contained in $\bH ^2 _d \otimes \J$, $(R \otimes I_\J)-$invariant, and $X := (R \otimes I_\J )| _{\scr{M}}$ is a row contraction. Then $X$ is a row isometry if and only if $\scr{M} = \scr{M} ^L (F)$ for an injective $F(L) \in \bH ^\infty _d \otimes \L (\mc{F} , \J ) $.
\end{lemma}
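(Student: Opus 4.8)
The plan is to exploit the fact that, by construction of the range-space norm, $F(L)$ is always a co-isometry from $\bH ^2 _d \otimes \mc{F}$ onto $\scr{M} ^L (F)$ whose initial space is $\ker{F(L)} ^\perp$. Injectivity of $F$ is then precisely the statement that this co-isometry is unitary, and the intertwining of the right free shifts lets me transport the row-isometric structure between $R \otimes I_\mc{F}$ and $X$. I would treat the two implications separately.

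For the `if' direction I would assume $\scr{M} = \scr{M} ^L (F)$ with $F(L)$ injective. Since $F(L) \colon \bH ^2 _d \otimes \mc{F} \to \scr{M} ^L (F)$ is a co-isometry and $\ker{F(L)} = \{ 0 \}$, it is a unitary. Using that $F(L)$ intertwines the right shifts, $(R_k \otimes I_\J ) F(L) = F(L) (R_k \otimes I_\mc{F})$, I would conclude that $X = X_F = F(L) (R \otimes I_\mc{F}) (F(L) ^* \otimes I_d)$ is a unitary conjugate of the row isometry $R \otimes I_\mc{F}$, hence is itself a row isometry. (Equivalently, this direction is immediate from Corollary \ref{isom}, since an injective $F$ has trivial, and therefore reducing, kernel.)

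For the `only if' direction I would assume $X$ is a row isometry. By Lemma \ref{pure}, $X$ is pure, so by \cite[Proposition 2.3]{Pop-dil} there is a Hilbert space $\mc{E}$ and a unitary $V \colon \bH ^2 _d \otimes \mc{E} \to \scr{M}$ with $X V = V (R \otimes I_\mc{E})$. Composing with the bounded embedding $\mr{e} \colon \scr{M} \hookrightarrow \bH ^2 _d \otimes \J$, I would set $F := \mr{e} V$ and check, from the two intertwining relations, that $F (R \otimes I_\mc{E}) = (R \otimes I_\J) F$; by \cite[Theorem 1.2]{DP-inv} this forces $F = F(L) \in \bH ^\infty _d \otimes \L (\mc{E} , \J )$ to be a left multiplier, and it is injective because $\mr{e}$ and $V$ are.

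The hard part is the final bookkeeping: verifying $\scr{M} ^L (F) = \scr{M}$ as Hilbert spaces for this $F$. As sets, $\ran{F(L)} = \mr{e}\, V (\bH ^2 _d \otimes \mc{E}) = \scr{M}$ inside $\bH ^2 _d \otimes \J$. For the norms, given $x$, the vector $F(L) x = \mr{e}(Vx)$ represents the element $Vx \in \scr{M}$, whose $\scr{M} ^L (F)$-norm is $\| P _{\ker{F(L)}} ^\perp x \| = \| x \|$ (as $F$ is injective) and whose $\scr{M}$-norm is $\| V x \| _\scr{M} = \| x \|$ (as $V$ is unitary); the two agree, so $\scr{M} = \scr{M} ^L (F)$ isometrically. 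I expect the only genuinely delicate points to be this isometric identification and the correct appeal to \cite[Theorem 1.2]{DP-inv} ensuring that $\mr{e} V$ is a bona fide multiplier rather than merely a bounded intertwiner; the row-isometry transfer in the `if' direction is immediate once $F(L)$ is recognized as unitary.
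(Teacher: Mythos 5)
Your proposal is correct and follows essentially the same route as the paper: the forward direction uses purity (Lemma \ref{pure}) plus the fact that a pure row isometry is its own minimal row isometric dilation to produce $F = \mr{e}\,U$ with $U$ unitary, and the converse transfers the isometric property of $R \otimes I_{\mc{F}}$ through the (now unitary) map $F(L) : \bH ^2 _d \otimes \mc{F} \to \scr{M} ^L (F)$, which is just a structural rephrasing of the paper's direct norm computation. One small caution: your parenthetical appeal to Corollary \ref{isom} for the ``if'' direction would be circular, since that corollary is proved in the paper using this very lemma --- but your primary argument does not rely on it, so the proof stands.
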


\begin{proof} 
We have that $X:= (R \otimes I _\J) | _\scr{M}$ is a pure row isometry by Lemma \ref{pure}.  Recall, by the NC de Branges - Beurling Theorem, that if
$$ F(L) := \mr{e} P _\scr{M} U, $$ where $U : \bH ^2 _d \otimes \mc{F} \rightarrow \hat{\scr{M}}$ is the unitary which intertwines $R \otimes I _\mc{F}$ with the minimal row isometric dilation, $\hat{X}$, of $X$, then $\scr{M} = \scr{M} ^L (F)$. Since we assume that $X$ is its own minimal row isometric dilation, it follows that
$$ F(L) = \mr{e} U, $$ which is injective since $\mr{e}$ is injective and $U$ is unitary.  Conversely, if $F(L)$ is injective, and $X := (R \otimes I_\J) | _\scr{M}$, then for any $\mbf{f}  \in \bH ^2 _d \otimes \mc{F} \otimes \C ^d$, 
\ba \| X (F(L) \otimes I_d) \mbf{f} \| ^2 _\scr{M} & =&  \| F(L) (R \otimes I_\mc{F}) \mbf{f} \| 
\nn \\
& = & \| P _{\ker{F(L)}} ^\perp (R \otimes I_\mc{F} ) \mbf{f} \| ^2 _{\bH ^2 _d\otimes \mc{F}} \nn \\
& = & \| (R \otimes I_\mc{F}) \mbf{f} \| ^2 _{\bH ^2 _d \otimes \mc{F}} \nn \\
& = & \| \mbf{f} \| ^2 _{\bH ^2 _d \otimes \mc{F} \otimes \C ^d}, \nn \ea so that $X$ is a row isometry.
\end{proof}

\begin{proof}{ (of Corollary \ref{isom})}
Let $\scr{M} = \scr{M} ^L (F)$. By the previous lemma we also have that $\scr{M} = \scr{M} ^L (G)$ where $G(L) \in \bH ^\infty _d \otimes \L (\mc{G} , \J )$ is injective. Since the CPNC kernels of $\scr{M} ^L (G) = \scr{M} = \scr{M} ^L (F)$ are the same, it follows that 
$$ F(L) F(L) ^* = G(L) G(L) ^*, $$ so that by Douglas Factorization and commutant lifting, there are contractive operator-valued multipliers, $C (L) \in \bH ^\infty _d \otimes \L (\mc{F} , \mc{G} )$, and $D (L) \in \bH ^\infty _d \otimes \L (\mc{G} , \mc{F} )$ so that $F(L) = G(L) C(L)$ and $G(L) = F(L) D(L)$. 
In particular, 
$$ G(L) = G(L) C(L) D(L), \quad \mbox{and} \quad F(L) = F(L) D(L) C(L). $$ Since $G(L)$ is injective,
$$ C(L) D(L) = I_{\bH ^2 _d \otimes \mc{G}}, $$ and 
$$ P_{\ker{F}} ^\perp = P_{\ker{F} } ^\perp D(L) C(L). $$ Note that $\ker{C(L)} = \ker{F(L)}$ since $G(L)$ is injective. 

We claim that 
$$ D(L) = I_{\bH ^2 _d } \otimes D; \quad \quad D \in \mc{L} ( \mc{G} , \mc{F} ), $$ where $D$ is a fixed isometry. First, if $D(L)$ was not an isometry, then since it is contractive, there would be some $g \in \bH ^2 _d \otimes \mc{G}$ so that $\| D (L) g \| < \| g \| $. But then, 
$$ \| g \| = \| C (L) D(L) g \| \leq \| C(L) \| \| D(L) g \|  < \| C (L) \| \| g \|, $$ which is not possible since $C(L)$ is contractive. Moreover, repeating this same argument for $Z \in \B ^d _\N$ shows that the operator-valued NC function $D(Z)$ is an isometry in $\C ^{n\times n} \otimes \scr{L} (\mc{G} , \mc{F} )$ for any $Z \in \B ^d _n$. Since $C(Z)$ is a contractive left inverse of the isometry $D(Z)$, it follows that $C(Z) = D(Z) ^*$. (This is a consequence of a general fact: A contraction $T$ is an extension of a partial isometry, $V$, in the sense that $T V^* V = V$, if and only if $T^*$ is a contractive extension of $V^*$, see \cite[Lemma 2.3]{JM}. In our case, $C(Z)$ is a contractive extension of the co-isometry $D(Z)^*$ so that $C(Z) ^* $ must be a contractive extension of $D(Z)$, which is an isometry and hence has no non-trivial contractive extensions. That is, $C(Z)^* = D(Z)$, and $C(L) ^* = D(L)$.) 

Further note that $D (Z)$ can be viewed as an operator-valued NC function, \emph{i.e.} it is graded and preserves direct sums and joint similarities, if one conjugates the point evaluations by certain unitary permutation matrices \cite[pp. 65--66]{KVV-rational2}, \cite[p.38]{PV-realize}. However, by the difference-differential calculus for NC functions, this would imply that 
\ba D \bpm Z & X \\ & W \epm  & = & \bpm D(Z) & \Delta D (Z,W) [X]  \\ & D(W) \epm \nn \\
 =  C \bpm Z & X \\ & W \epm ^* & = &  \bpm C(Z) & \Delta C (Z,W) [X]  \\ & C(W) \epm ^* \nn \\
& = & \bpm C(Z) ^* & 0 \\ \Delta C (Z,W) [X] ^* & C(W) ^* \epm. \nn \ea It follows that $\Delta D (Z,W) [X ] \equiv 0$ for any $Z,W, X$, and it follows from Taylor-Taylor series expansions that $D (Z) = I_n \otimes D$, $D := D(0) \in \scr{L} (\mc{G} , \mc{F} )$ for $Z \in \B ^d _n$ is constant-valued \cite[Chapter 7]{KVV}. Moreover, by previous calculation, $D = D(0) \in \scr{L} (\mc{G} , \mc{F} )$ is an isometry.

Now we claim that $\ker{F} ^\perp = \ran{I \otimes D }$. First consider $h= (I \otimes D) y$. Applying the same argument, as above, we see that $C(L)^* = (I \otimes D)^*$. Since $\ker{F} = \ker{C(L)}$, we immediately conclude that $\ker{F} ^\perp = \ran{I \otimes D }$. In conclusion, 
$$ \ker{F (L) } = \bH ^2 _d \otimes \ran{D}, $$ and this completes the proof.

%Then $h = x + x_\perp$ where $x \in \ker{F}$ and $x_\perp \in \ker{F} ^\perp$. Then, using that $\ker{C(L)} ^\perp = \ker{F} ^\perp$, 
%\ba \| C(L) (I \otimes D) y \|^2  &= & \| C (L) x_\perp \| ^2 \nn \\
%& \leq & \| x _\perp \| ^2 \leq \| x _\perp \| ^2 + \| x \| ^2 \nn \\
%& = & \| (I \otimes D ) y \| ^2 =\| y \| ^2,  \nn \ea since $I \otimes D$ is an isometry.
%On the other hand, $ y = C(L) (I \otimes D ) y$, so that 
%$$ \| y \| ^2 = \| C(L) (I \otimes D) y \| ^2, $$ and this proves that 
%$\ran{I\otimes D} \subseteq \ker{F} ^\perp$. To complete the claim, suppose that $f \in \ker{F} ^\perp \ominus \ran{I\otimes D}$. Then, 
%\ba f &= & P_{\ker{F}} ^\perp f \nn \\
%& = & P_{\ker{F}} ^\perp (I \otimes D) C(L) f \nn \\
%& = & (I \otimes D) C(L) f, \nn \ea since $\ran{I\otimes D} \subseteq \ker{F} ^\perp$. It follows that $f \perp f$ so that $f \equiv 0$, and $\ker{F} ^\perp = \ran{I \otimes D}$. In conclusion, 
%$$ \ker{F (L) } = \bH ^2 _d \otimes \ran{D}, $$ and this completes the proof. 
\end{proof}

\begin{thm} \label{injfactor}
If $F,G$ are injective, then so is $H := F \wedge G$ and
$$ \bpm H(L) \\ - H(L) \epm  =  \bpm F(L) & 0 \\ 0 & G(L) \epm \Ga (L), $$ where 
$$ \Ga (L) : \bH ^2 _d \otimes \mc{H} \rightarrow \bH ^2 _d \otimes (\mc{F} \oplus \mc{G} ), $$ is the inner with $\ran{\Ga (L)} = \ker{F\vee G}$.
\end{thm}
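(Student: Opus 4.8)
The factorization $\bsm H(L) \\ -H(L)\esm = \bsm F(L) & 0 \\ 0 & G(L)\esm \Ga(L)$ with $\Ga(L)$ inner and $\ran{\Ga(L)} \subseteq \ker{(F\vee G)(L)}$ is already furnished by Theorem \ref{Hid}, so my plan is to establish only the two genuinely new features of the injective case: that $H = F\wedge G$ is itself injective, and that the containment of ranges is an equality. I would treat these separately.

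For injectivity, the strategy is to show that $X_H := (R\otimes I_\J)|_{\scr{M} ^L (H)}$ is a row isometry and then quote Lemma \ref{isolemma}. Since $F$ and $G$ are injective, Lemma \ref{isolemma} already tells me $X_F$ and $X_G$ are row isometries. Then, recalling that the norm on $\scr{M} ^L (H) = \scr{M} ^L (F)\cap\scr{M} ^L (G)$ is $\| \cdot \| ^2 _H = \| \cdot \| ^2 _F + \| \cdot \| ^2 _G$ and that $X_H$ acts simply as $R\otimes I_\J$, I would compute for $\mbf{h}\in\scr{M} ^L (H)\otimes\C^d$ that $\|X_H\mbf{h}\| ^2 _H = \|X_F\mbf{h}\| ^2 _F + \|X_G\mbf{h}\| ^2 _G = \|\mbf{h}\| ^2 _{F\otimes\C^d} + \|\mbf{h}\| ^2 _{G\otimes\C^d} = \|\mbf{h}\| ^2 _{H\otimes\C^d}$, the middle equality using that $X_F, X_G$ are row isometries. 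Hence $X_H$ is a row isometry and, by the converse direction of Lemma \ref{isolemma}, $H$ is injective.

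For the range equality the decisive point is that, in the injective case, the minimal row isometric dilation underlying the construction of $\Ga$ in Theorem \ref{Hid} does not enlarge anything. Because $X_F, X_G$ are already row isometries we have $\hat{X}_F = X_F$ and $\hat{X}_G = X_G$, and by the Remark after Corollary \ref{domuvee} the subspace $\ran{U_\wedge} = \scr{M} ^L \bsm H \\ -H\esm$ is $X_F\oplus X_G$-invariant; therefore the dilation space $\hat{\scr{M}} = \bigvee_\alpha (X_F^\alpha\oplus X_G^\alpha)\scr{M} ^L \bsm H \\ -H\esm$ of Theorem \ref{Hid} collapses to $\ran{U_\wedge}$ itself, so $U_H$ is unitary onto it and $\ran{\Ga(L)} = (U_F^*\oplus U_G^*)\ran{U_\wedge}$. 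In parallel I would compute $\ker{(F\vee G)(L)}$ directly: writing $F(L) = \mr{e}_F U_F$ and $G(L) = \mr{e}_G U_G$ in the injective form from Lemma \ref{isolemma}, the equation $\mr{e}_F U_F f + \mr{e}_G U_G g = 0$ forces the elements $U_F f \in \scr{M} ^L (F)$ and $U_G g \in \scr{M} ^L (G)$ to agree up to sign as elements of $\bH ^2 _d\otimes\J$, hence to lie in $\scr{M} ^L (F)\cap\scr{M} ^L (G) = \scr{M} ^L (H)$; thus $(U_F f, U_G g) = U_\wedge v$ for some $v\in\scr{M} ^L (H)$, and since $U_F\oplus U_G$ is unitary this gives $\ker{(F\vee G)(L)} = (U_F^*\oplus U_G^*)\ran{U_\wedge} = \ran{\Ga(L)}$.

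The one step needing genuine care — the main obstacle — is the collapse of the dilation space: I must be sure that the smallest $X_F\oplus X_G$-invariant subspace containing $\scr{M} ^L \bsm H \\ -H\esm$ is that subspace itself, which rests precisely on its $X_F\oplus X_G$-invariance and is what makes $U_H$ unitary onto $\ran{U_\wedge}$ and thereby pins $\ran{\Ga(L)}$ down exactly rather than leaving only a containment. The remaining work is routine bookkeeping with the intertwining unitaries $U_F, U_G, U_H$ and the bounded embeddings $\mr{e}_F, \mr{e}_G$.
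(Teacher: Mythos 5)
Your proposal is correct and follows essentially the same route as the paper: injectivity of $H$ via showing $X_H$ is a row isometry and invoking Lemma \ref{isolemma} (with the same implicit replacement of $H$ by an injective representative), and the range equality $\ran{\Ga(L)}=\ker{(F\vee G)(L)}$ via the collapse of the minimal dilation (so $U_H$ is onto $\scr{M}^L\bsm H \\ -H\esm$) together with the direct computation that any $f\oplus g$ in the kernel satisfies $U_Ff\oplus U_Gg\in\ran{U_\wedge}$. Your norm computation for the row-isometry of $X_H$ on the intersection space is a mild variant of the paper's argument through the $X_F\oplus X_G$-invariance of $\scr{M}^L\bsm H\\-H\esm$, but the substance is identical.
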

\begin{proof}
If both $F,G$ are injective, it follows that $X_F:= R \otimes I_\J | _{\scr{M} ^L (F)}$ and $X_G$ are row isometries. Moreover, as before $\scr{M} := \scr{M} ^L \bsm H \\ - H \esm = \ran{P_\wedge}$ is $X_F \oplus X_G-$invariant so that 
$$ X:= (X_F \oplus X_G) | _{\scr{M} } = X_{\bsm H \\ - H \esm}, $$ is a row isometry, and it follows that $X_H = (R \otimes I_\mc{J} ) | _{\scr{M} ^L (H)}$ is also a row isometry. By Corollary \ref{isom} and Lemma \ref{isolemma}, there is an injective $H'$ so that $\scr{M} ^L (H ) = \scr{M} ^L (H' )$ and we can assume, without loss in generality that $H =H ' $ is injective. 

By the NC de Branges-Beurling theorem, if $U_H : \bH ^2 _d \otimes \mc{H} \rightarrow \scr{M} ^L \bsm H \\ - H \esm$ is the unitary intertwining $R \otimes I_\mc{H}$ with $X =X_{\bsm H \\ - H \esm}$, then
\ba \bpm H(L) \\ - H(L) \epm & = & \mr{e} _F \oplus \mr{e} _G U_H \nn \\
& = & \underbrace{\mr{e}_F U_F \oplus \mr{e} _G U_G}_{=F(L) \oplus G(L)} \underbrace{(U_F ^* \oplus U_G ^*) U_H}_{=\Ga (L)}. \nn \ea 
As in the proof of Theorem \ref{Hid}, $\Ga (L) : \bH ^2 _d \otimes \H \rightarrow \bH ^2 _d \otimes (\mc{F} \oplus \mc{G})$ is inner. Moreover, as in the proof of Theorem \ref{Hid}, $\Ga (L) = U_F ^* \oplus U_G ^* U_H$. If $f \oplus g$ belongs to $\ker{(F,G)}$, then 
\ba 0 & = & \left( F(L) , G(L) \right) \bpm f \\ g \epm \nn \\
& = & \left( \mr{e} _F , \mr{e} _G \right) \bpm U_F & 0 \\ 0 & U_G \epm \bpm f \\ g \epm, \nn \ea 
and it follows that $U_F f \oplus U_G g \in \ran{U_H}$, where $\ran{U_H} = \scr{M} ^L \bsm H \\ - H \esm$, since $H$ is injective. It follows that $\Ga = \Ga (L)$ is onto $\ker{F \vee G}$.
\end{proof}

\section{Point of view: Hilbert modules}

The goal of this section is to describe the operations $\wedge$ and $\vee$ through the language of Hilbert modules and category theory. The $\vee$ we obtain here is slightly different, but only up-to factoring out the kernel, in a sense. Let us call a pair $(\sM,(X_1,\ldots,X_d))$ a pure right Hilbert $\C \langle z_1,\ldots,z_d\rangle$ module, if $(X_1,\ldots,X_d)$ is a pure row contraction. Consider, the category $\hilbmod$ of all pure right Hilbert $\C \langle z_1,\ldots,z_d\rangle$ modules with injective module maps as morphisms. To simplify notations, we will usually omit the tuple of operators and speak simply of a pure Hilbert module $\sM$.

Let $\cC = (\hilbmod \downarrow \bH^2_d)$ be the slice category over the Fock space, on which the free algebra acts by the right free shifts. Namely, the objects of $\cC$ are pairs $(\sM, e)$, where $\sM$ is a pure Hilbert module and $e$ is an embedding of $\sM$ into $\bH^2_d$ as a right-invariant subspace. The morphisms in $\cC$ are $f \colon (\sM,e) \to (\sM',e')$, where $f \colon \sM \to \sM'$ is an injective module map, such that $e = e' \circ f$.

There are two natural operations on $\cC$, the join and the meet. Given two pure Hilbert modules $(\sM_1,e_1)$ and $\sM_2, e_2)$, we consider first the direct sum $\sM_1 \oplus \sM_2$ with the map $e =\left(\begin{smallmatrix} e_1 & e_2 \end{smallmatrix} \right)$ into $\bH^2_d$. Observe that this is a module map and $\ran{e} = \ran{e_1} + \ran{e_2}$. However, this map is, in general, not injective, so we have $\ker{e}$ that is a submodule of $\sM_1 \oplus \sM_2$. Then we get an exact sequence of pure right Hilbert modules
\[
\xymatrix{0 \ar[r] & \ker{e} \ar[r] & \sM_1 \oplus \sM_2 \ar[r]^{e} & \bH^2_d}.
\]
Therefore, set $$(\sM_1,e_1) \vee (\sM_2,e_2) = (\sM_1 \vee \sM_2, e_1 \vee e_2) = (\ker{e}^{\perp},e|_{\ker{e}^{\perp}}).$$

Now note that since both $e_1$ and $e_2$ are injective, the a vector $(\xi, \eta)$ is in $\ker{e}$ if and only if $e_1 \xi = - e_2 \eta$. Hence we consider $\sN = e_1 \sM_1 \cap e_2 \sM_2$. Again, by the injectivity of $e_1$ and $e_2$, each defines an isometry onto its range endowed with the range norm $\|e_j \xi_j\|_j = \|\xi_j\|_j$. We endow $\sN$ with the following norm, if $\zeta= e_1 \xi = e_2 \eta$, then $\|\zeta\|_{\sN}^2 = \| \xi \|_1^2 + \|\eta\|_2^2$. Note that $\sN$ is closed with respect to this norm and the maps $f_1(h) = \xi$ and $f_2(h) = \eta$ are injective and contractive module maps. Thus the map $f \colon \sN \to \sM_1 \oplus \sM_2$ given by $f(h) = (f_1(h), -f_2(h))^T$ is an isometric embedding onto $\ker{e}$. Hence we set $(\sM_1,e_1) \wedge (\sM_2,e_2) = (\ker{e},e_1 P_{\sM_1}) = (\ker{e},-e_2 P_{\sM_2})$ and we get a decomposition of $\sM_1 \oplus \sM_2$ as an orthogonal direct sum of $\sM_1 \wedge \sM_2$ and $\sM_1 \vee \sM_2$. 

\begin{prop} \label{prop:coproduct}
The operation $\vee$ and $\wedge$ endow $\cC$ with coproducts and products, respectively.
\end{prop}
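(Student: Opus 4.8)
The plan is to observe first that $\cC$ is a \emph{thin} category, i.e.\ a preorder. Indeed, any morphism $f\colon(\sM,e)\to(\sM',e')$ satisfies $e = e'\circ f$, and since $e'$ is injective this forces $f = (e')^{-1}\circ e$; hence there is at most one morphism between any two objects, and a morphism $(\sM,e)\to(\sM',e')$ exists exactly when $\ran{e}\subseteq\ran{e'}$ with the induced inclusion of range spaces bounded, that is, when $\scr{M}(e)\bc\scr{M}(e')$. Writing $(\sM,e)\preceq(\sM',e')$ for this relation, the proposition reduces to an order-theoretic statement: in a preorder the coproduct of two objects is their least upper bound and the product is their greatest lower bound. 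So it suffices to show that $\sM_1\vee\sM_2$ is the $\preceq$-least upper bound, and $\sM_1\wedge\sM_2$ the $\preceq$-greatest lower bound, of $(\sM_1,e_1)$ and $(\sM_2,e_2)$. (One must also know that $\sM_1\vee\sM_2$ and $\sM_1\wedge\sM_2$ are genuinely objects of $\cC$; for the join this follows from co-invariance of $\ker{e}^\perp$ in $\sM_1\oplus\sM_2$, since purity passes to compressions onto co-invariant subspaces, and for the meet from Lemma \ref{pure} together with the de Branges-Beurling realization of $\ran{e_1}\cap\ran{e_2}$ as a multiplier range space.)

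For the coproduct, with $e=(e_1,e_2)$ on $\sM_1\oplus\sM_2$ and $\sM_1\vee\sM_2=\ker{e}^\perp$, I would take the canonical maps $\iota_j := P_{\ker{e}^\perp}\,\kappa_j$, where $\kappa_j\colon\sM_j\hookrightarrow\sM_1\oplus\sM_2$ is the inclusion onto the $j$-th summand. Since $e$ annihilates $\ker{e}$ one gets $e_\vee\circ\iota_j = e\circ\kappa_j = e_j$, so $\iota_j$ respects the structure maps into $\bH^2_d$; it is injective because $e_j$ is, and it is a module map because $\ker{e}$ is a submodule, so that $P_{\ker{e}^\perp}$ intertwines the compressed structure on $\ker{e}^\perp$. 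This shows $(\sM_j,e_j)\preceq\sM_1\vee\sM_2$. For the universal property, let $(\mc{N},g)$ be any upper bound, so that $\phi_j := g^{-1}e_j\colon\sM_j\to\mc{N}$ are bounded injective module maps with $g\phi_j = e_j$. Then $\Phi := (\phi_1,\phi_2)\colon\sM_1\oplus\sM_2\to\mc{N}$ is a bounded module map with $g\Phi = e$, whence $\ker{\Phi}=\ker{e}$ (as $g$ is injective), and I would take $u := \Phi|_{\ker{e}^\perp}$. This $u$ is bounded, injective (because $\ker{\Phi}=\ker{e}$), a module map, and satisfies $g\circ u = e_\vee$, so it is the required mediating morphism $\sM_1\vee\sM_2\to(\mc{N},g)$; uniqueness is automatic by thinness.

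For the product I would argue dually. With $\sM_1\wedge\sM_2=\ker{e}$ and embedding $e_\wedge = e_1 P_{\sM_1}|_{\ker{e}} = -e_2 P_{\sM_2}|_{\ker{e}}$, whose range is $\ran{e_1}\cap\ran{e_2}$, the signed coordinate projections $\pi_1 := P_{\sM_1}|_{\ker{e}}$ and $\pi_2 := -P_{\sM_2}|_{\ker{e}}$ satisfy $e_j\circ\pi_j = e_\wedge$, are injective, and are contractive module maps since $\|\cdot\|_{\wedge}^2 = \|\cdot\|_1^2 + \|\cdot\|_2^2$; thus $\sM_1\wedge\sM_2\preceq(\sM_j,e_j)$. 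Given any lower bound $(\mc{N},g)$, with $\psi_j := e_j^{-1}g\colon\mc{N}\to\sM_j$ the bounded injective module maps satisfying $e_j\psi_j = g$, the pair $v := (\psi_1,-\psi_2)\colon\mc{N}\to\sM_1\oplus\sM_2$ lands in $\ker{e}$ (since $e_1\psi_1 = g = e_2\psi_2$), is bounded, injective and a module map, and satisfies $e_\wedge\circ v = g$. Hence $v$ is the unique mediating morphism $(\mc{N},g)\to\sM_1\wedge\sM_2$, and $\wedge$ is the product.

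The only part requiring genuine care, everything else being forced by thinness, is the boundedness of the two mediating maps $u$ and $v$, which is exactly where the bounded-containment built into the morphisms of $\cC$ enters: for $v$ one estimates $\|v\zeta\|_\wedge^2 = \|\psi_1\zeta\|_1^2 + \|\psi_2\zeta\|_2^2 \le (\|\psi_1\|^2+\|\psi_2\|^2)\,\|\zeta\|_{\mc{N}}^2$, and for $u$ one uses that $u$ is the restriction to $\ker{e}^\perp$ of the bounded map $\Phi$. I would also be careful to confirm, as noted above, that $\sM_1\vee\sM_2$ and $\sM_1\wedge\sM_2$ are pure Hilbert modules embedded as $R$-invariant subspaces, so that they are legitimate objects of $\cC$.
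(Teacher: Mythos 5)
Your proof is correct, and the concrete constructions — the canonical maps $P_{\ker{e}}^{\perp}\kappa_j$ into $\ker{e}^{\perp}$, the mediating map obtained by restricting $(\phi_1,\phi_2)$ to $\ker{e}^{\perp}$ for the coproduct, and the signed pair $(\psi_1,-\psi_2)$ landing in $\ker{e}$ for the product — are exactly those of the paper. What you do differently is the opening observation that $\cC$ is a thin category: since $e=e'\circ f$ and $e'$ is injective, any morphism is forced to be $(e')^{-1}\circ e$, so uniqueness of the mediating morphisms is automatic and the proposition reduces to identifying least upper bounds and greatest lower bounds in the induced preorder on operator-range spaces. The paper instead proves uniqueness by hand (surjectivity of $\left(\begin{smallmatrix}\varepsilon_1 & \varepsilon_2\end{smallmatrix}\right)$ onto $\ker{e}^{\perp}$ for the coproduct, injectivity of $e_1P_{\sM_1}$ on $\ker{e}$ for the product). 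Your framing buys a cleaner uniqueness argument and makes transparent why the later lattice theorem is just the poset reflection of $\cC$; one could even note that boundedness of $(e')^{-1}\circ e$ is automatic from the closed graph theorem once $\ran{e}\subseteq\ran{e'}$, so the preorder is precisely bounded containment of ranges. Your additional care in checking that $\ker{e}^{\perp}$ (co-invariant, hence a pure compression) and $\ker{e}$ (invariant, hence a pure restriction) are genuine objects of $\cC$ is a point the paper's proof leaves implicit, and is worth keeping.
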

\begin{proof}
Let $\iota_j \colon \sM_j \to \sM_1 \oplus \sM_2$ be the natural embedding, for $j =1,2$. Consider the maps $\varepsilon_j \colon \sM_j \to \sM_1 \vee \sM_2$ that are defined by $\varepsilon_j = P_{\ker{e}}^{\perp} \iota_j$. Then, for $0 \neq \xi \in \sM_1$, we have $\iota_1(\xi) = (\xi,0)^T$ and since $e_1$ is injective, $e(\xi,0)^T = e_1 \xi \neq 0$. Thus, $P_{\ker{e}}^{\perp} \iota_1(\xi) \neq 0$ and we have that 
\[
e \varepsilon_1(\xi) = =e P_{\ker{e}}^{\perp} \iota_1(\xi) = e_1 \xi.
\]
The same is true for $j=2$. Thus, we get that $\varepsilon_j \colon (\sM_j,e_j) \to (\sM_1,e_1) \vee (\sM_2,e_2)$ are morphisms in $\cC$. Now, if $(\sN,f)$ is another object in $\cC$, with maps $\theta_j \colon (\sM_j,e_j) \to (\sN,f)$, then $f \theta_j = e_j = e \iota_j$. Set $\theta \colon \sM_1 \oplus \sM_2 \to \sN$, $\theta =\left(\begin{smallmatrix} \theta_1 & \theta_2 \end{smallmatrix} \right)$. Thus, $f \theta = e$ and we obtain a map $\widetilde{\theta} = \theta|_{\ker{e}^{\perp}}$ from  $(\sM_1,e_1) \vee (\sM_2,e_2)$ to $(\sN,f)$. Moreover,
\[
f \theta_j = e_j = (e_1 \vee e_2) \varepsilon_j = f \widetilde{\theta} \varepsilon_j.
\]
Since $f$ is injective, we conclude that $\theta_j = \widetilde{\theta} \varepsilon_j$. Now it remains to prove uniqueness. Let $\varphi \colon (\sM_1,e_1) \vee (\sM_2,e_2) \to (\sN,f)$ be another map, such that $\varphi \varepsilon_j = \theta_j$. Then $\varphi \varepsilon_j = \widetilde{\theta} \varepsilon_j$. However, $\left(\begin{smallmatrix} \varepsilon_1 & \varepsilon_2 \end{smallmatrix} \right)$ is surjective and thus $\varphi = \widetilde{\theta}$.

Now consider $(\sM_1,e_1) \wedge (\sM_2,e_2)$. As we have seen, there are two (contractive) homomorphism $f_j \colon (\sM_1,e_1) \wedge (\sM_2,e_2) \to (\sM_j,e_j)$, $j=1,2$. Now given $(\sN,g)$ with two homomorphisms $h_j \colon (\sN,g) \to (\sM_j,e_j) $, $j=1,2$, set $h = \left(\begin{smallmatrix} h_1 & \\ - h_2 \end{smallmatrix} \right)^T$. Then, since $g = e_1 h_1 = e_2 h_2$, we get that $\ran{h} \subset \ker{e}$. Since $M_1 \wedge M-2 = \ker{e}$, to see that this is a morphism in $\cC$, we only need to compute
\[
g = e_1 h_1 = e_1 = e_1 P_{\sM_1} h.
\]
To prove uniqueness, assume that $\chi \colon \sN \to \ker{e}$ is another map, such that $g = e_1 P_{\sM_1} \chi$. Since $e_1 P_{\sM_1}$ is injective on $\ker{e}$, we get that $\chi = h$.
\end{proof}

\begin{lemma}
If $(\sM_1,e_1), (\sM_2,e_2) \in \cC$, then $(\sM_1,e_1) \cong (\sM_2,e_2)$ if and only if $\ran{e_1} = \ran{e_2}$.
\end{lemma}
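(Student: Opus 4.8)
The plan is to prove the two implications separately: the forward direction is a one-line diagram chase, while the reverse direction amounts to writing down the obvious candidate isomorphism $e_2^{-1}\circ e_1$ and checking it lives in the category. Throughout I would write $X^{(j)} = (X^{(j)}_1,\ldots,X^{(j)}_d)$ for the pure row contraction furnishing the module action on $\sM_j$, so that the defining feature of the embedding $e_j$ is the intertwining $e_j X^{(j)}_k = R_k e_j$ for $1\le k\le d$.

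For the forward direction, suppose $f\colon(\sM_1,e_1)\to(\sM_2,e_2)$ is an isomorphism in $\cC$. Being an isomorphism, $f$ has a two-sided inverse and is in particular a bijection $\sM_1\to\sM_2$; being a morphism, it satisfies $e_1 = e_2\circ f$. Hence $\ran{e_1} = e_2(f(\sM_1)) = e_2(\sM_2) = \ran{e_2}$, and the ranges agree.

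For the reverse direction I would assume $\ran{e_1} = \ran{e_2} =: \scr{R}\subseteq\bH ^2 _d$ and set $f := e_2^{-1}\circ e_1\colon\sM_1\to\sM_2$; since each $e_j$ is injective with range $\scr{R}$, this is a well-defined linear bijection with inverse $e_1^{-1}\circ e_2$, and by construction $e_2\circ f = e_1$ and $e_1\circ f^{-1} = e_2$. To check $f$ is a module map, note that $e_2 X^{(2)}_k = R_k e_2$ gives $X^{(2)}_k e_2^{-1} = e_2^{-1} R_k$ on $\scr{R}$, and since $\scr{R} = \ran{e_1}$ is $R$-invariant,
\[
X^{(2)}_k f = X^{(2)}_k e_2^{-1} e_1 = e_2^{-1} R_k e_1 = e_2^{-1} e_1 X^{(1)}_k = f X^{(1)}_k
\]
for each $k$; the same identity for $f^{-1}$ follows by symmetry.

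The one genuinely nontrivial point — and where I expect the real work to lie — is boundedness of $f$ and $f^{-1}$, which morphisms between Hilbert modules should satisfy. Rather than trying to put a complete norm on the (in general non-closed) operator range $\scr{R}$, I would invoke the closed graph theorem directly on the map $f\colon\sM_1\to\sM_2$ between honest Hilbert spaces: if $\xi_n\to\xi$ in $\sM_1$ and $f\xi_n\to\eta$ in $\sM_2$, then boundedness of $e_1,e_2$ gives $e_1\xi_n\to e_1\xi$ and $e_2 f\xi_n\to e_2\eta$ in $\bH ^2 _d$, while $e_2 f\xi_n = e_1\xi_n\to e_1\xi$, so $e_2\eta = e_1\xi = e_2 f\xi$ and injectivity of $e_2$ forces $\eta = f\xi$. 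Thus the graph of $f$ is closed and $f$ is bounded; the symmetric argument bounds $f^{-1}$. Putting everything together, $f$ is a bounded bijective module map with bounded inverse and with $e_1 = e_2\circ f$, hence an isomorphism $(\sM_1,e_1)\cong(\sM_2,e_2)$ in $\cC$.
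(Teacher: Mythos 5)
Your proof is correct and follows essentially the same route as the paper's: both directions hinge on the observation that the only candidate isomorphism is $e_2^{-1}\circ e_1$. The paper's (two-line) argument gets boundedness from the earlier-established fact that the maps $\scr{R}\to\sM_j$ are contractive bijections (hence invertible by the open mapping theorem), whereas you supply it directly via the closed graph theorem; this is a minor difference of bookkeeping, and your version is the more self-contained of the two.
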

\begin{proof}
Of course, if $(\sM_1,e_1) \cong (\sM_2,e_2)$, then there exists an isomorphism $f \colon \sM_1 \to \sM_2$, such that $e_2 f = e_1$ and thus $\ran{e_2} = \ran{e_1}$.

Conversely, if $\ran{e_1} = \ran{e_2} = \sN$, then $f_j \colon \sN \to \sM_j$, $j = 1,2$, are contractive bijections. Hence an isomorphism is given by $f_2^{-1} f_1$.
\end{proof}

Therefore, we can construct a skeleton of the category, by simply considering the ranges of the embeddings. By the noncommutative de Branges-Beurling theorem, for every $(\sM,e) \in \cC$, there exists a Hilbert space $\J$ and a contractive row $F(L) \in \bH^{\infty}_d \otimes B(\J,\C)$, such that $e\sM = F(L)( \bH^2_d \otimes \J)$, where, the norm on $\sM$ is $\|\cdot\|_F$. The multiplier $F$ is of course non-unique, however, by Corollary \ref{Multfactor}, there is a natural choice for such a multiplier, namely $F(L) = e P_{\sM} U$. We will call this multiplier the representative of $(\sM,e)$.

\begin{lemma} \label{lem:equiv_reps}
If $F(L) \in \bH^{\infty}_d \otimes B(\J,\C)$ and $G(L) \in \bH^{\infty}_d \otimes B(\K,\C)$ are two representatives of $(\sM,e)$, then there exists an invertible multiplier $O(L) \in \bH^{\infty}_d \otimes B(\J,\K)$, such that $G(L) = F(L) O(L)$.
\end{lemma}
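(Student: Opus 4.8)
The plan is to exhibit $O(L)$ as the multiplier induced by the unitary change-of-coordinates between the two dilation coordinatizations underlying $F(L)$ and $G(L)$. Write $X$ for the row contraction carried by $\sM$, so that $X$ is $e$-unitarily equivalent to the restriction $R|_{e\sM}$ of the right free shift to the invariant subspace $e\sM$, and let $(\hat{X},\hat{\sM})$ be its minimal row isometric dilation. By Corollary \ref{Multfactor} and the construction preceding this lemma, each representative is of the form
\[
F(L) = e\, P_{\sM}\, U_F, \qquad G(L) = e\, P_{\sM}\, U_G,
\]
where $U_F : \bH^2_d \otimes \J \to \hat{\sM}$ and $U_G : \bH^2_d \otimes \K \to \hat{\sM}$ are unitaries intertwining the ampliated right free shifts $R \otimes I_\J$, $R \otimes I_\K$ with $\hat{X}$, and $P_{\sM} : \hat{\sM} \to \sM$ is the orthogonal projection. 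The first thing I would pin down is that both representatives may be taken to live over one and the same dilation space $\hat{\sM}$: this is precisely the uniqueness up to canonical equivalence of the minimal row isometric dilation of the fixed row contraction $X$ (Lemma \ref{minunique}). In particular $\dim{\J} = \dim{\K}$, both equal to the multiplicity of $\hat{X}$.

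With a common $\hat{\sM}$ in hand, I would set $O := U_F^* U_G : \bH^2_d \otimes \K \to \bH^2_d \otimes \J$. Being a composition of unitaries onto the same space, $O$ is unitary with inverse $U_G^* U_F$. Since $U_F$ and $U_G$ each intertwine their right free shift with $\hat{X}$, a direct computation gives $O (R \otimes I_\K) = (R \otimes I_\J)(O \otimes I_d)$, i.e. $O$ intertwines the right free shifts. By the identification of the commutant of the right free shift as the left multiplier algebra, \cite[Theorem 1.2]{DP-inv}, exactly as used in the proof of Theorem \ref{NCdBB}, it follows that $O = O(L)$ is a left multiplier $O(L) \in \bH^\infty_d \otimes B(\K,\J)$, the orientation being the one forced by the composition $G(L) = F(L) O(L)$. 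The same argument applied to its inverse $U_G^* U_F$ shows that $O(L)$ is an invertible multiplier.

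It then remains to verify the factorization, which is immediate by substitution: because $U_F$ is onto $\hat{\sM}$ we have $U_F U_F^* = I_{\hat{\sM}}$, so
\[
F(L) O(L) = e\, P_{\sM}\, U_F U_F^* U_G = e\, P_{\sM}\, U_G = G(L).
\]

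The substantive content is concentrated in the first two paragraphs: arranging that $F(L)$ and $G(L)$ are coordinatizations of the same minimal dilation (so that $U_F^* U_G$ is defined and unitary), and recognizing that a unitary intertwiner of the right free shifts is automatically an invertible left multiplier. The final factorization, together with the bookkeeping of the coefficient spaces $\J$ and $\K$, is then routine once these two points are in place.
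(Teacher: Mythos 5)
Your proof is correct, and it is worth noting how it relates to what the paper actually does: the paper's ``proof'' of this lemma is a pointer to the absorption-law argument inside the lattice theorem, which treats the more general situation of two multipliers whose range spaces merely coincide with \emph{equivalent} norms; there one must take the bounded invertible embedding $\mr{e}$ between the two realizations, lift it by commutant lifting to $\hat{\mr{e}}$ on the minimal dilation spaces, and set $C = U_T^* \hat{\mr{e}}\, U_F$, then check $\hat{\mr{e}}_{-1} = \hat{\mr{e}}^{-1}$. You exploit the fact that two \emph{representatives} of the same $(\sM, e)$ present literally the same Hilbert space with the same norm, so the embedding is the identity, the commutant lifting step evaporates, and $O = U_F^* U_G$ is directly a unitary intertwiner of the ampliated right free shifts, hence a left multiplier by \cite[Theorem 1.2]{DP-inv} together with the uniqueness of the minimal row isometric dilation (Lemma \ref{minunique}) to put both coordinatizations over one $\hat{\sM}$. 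This buys you a strictly stronger conclusion -- $O(L)$ is a unitary multiplier, not merely an invertible one -- and a self-contained argument, at the cost of not covering the more general ``same range, equivalent norms'' situation that the lattice theorem needs elsewhere. Two cosmetic points: your orientation $O(L) \in \bH^{\infty}_d \otimes B(\K, \J)$ is the one actually forced by $G(L) = F(L) O(L)$ (the lemma as printed has $\J$ and $\K$ transposed), and the phrase ``$e$-unitarily equivalent to $R|_{e\sM}$'' should be ``similar via $e$'' since $e$ need not be isometric -- neither affects the argument.
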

\begin{proof}
Contained in the proof of the lattice theorem.
\end{proof}

Due to the lemma, we can make the following definition:

\begin{defn}
For $(\sM,e)$ we define $\dim{\sM,e} = \dim{\J}$, where $F(L) \in \bH^{\infty}_d \otimes B(\J,\C)$ is a representative of $(\sM,e)$.
\end{defn}

Conversely, given $F(L) \colon \bH ^2_d \otimes \J \to \bH ^2_d$, we can define $\sM_F = \ker{F(L)}^{\perp}$ and $e_F = F(L)|_{\ker{F}^{\perp}}$. Then, $(\sM_F,e_F) \in \cC$. If $\sM \subset \bH ^2_d$ is a closed right-invariant subspace, then $\sM$ is the image of an isometry $F$. Since the right shifts restricted to $\sM$ form a row isometry, we have that a representative of $(\sM,e)$ is $e$ itself. Hence we make the following definition.

\begin{defn}
Let $F(L) \colon \bH ^2_d \otimes \J \to \bH ^2_d$. We say that $F$ is minimal, if $F$ is a representative of $(\sM_F,e_F)$.
\end{defn}

By Lemma \ref{lem:equiv_reps}, if $F$ and $G$ are minimal with the same range, then $G = F O$, for some invertible operator-valued multiplier $O(L)$.

\begin{defn}
Let $F(L) \colon \bH ^2_d \otimes \J \to \bH ^2_d$ and $G(L) \colon \bH ^2_d \otimes \K \to \bH ^2_d$, we set 
\begin{itemize}
\item $F \wedge G$ is a representative of $(\sM_F,e_F) \wedge (\sM_G,e_G)$. 

\item $F \vee G$ is a representative of $(\sM_F,e_F) \vee (\sM_G,e_G)$.
\end{itemize}
\end{defn}

\begin{remark}
The lattice properties follow by standard arguments from Proposition \ref{prop:coproduct}. In particular, it is a general fact of category theory that for three objects $X,Y,Z$ in a category with products and co-products.
\[
X \times (Y \sqcup Z) = (X \times Y) \sqcup (X \times Z).
\]
\end{remark}

\begin{thm}
Let $\cS$ be the collection of ranges of minimal multipliers. Then, $\cS$ is a bounded general lattice.
\end{thm}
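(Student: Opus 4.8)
The plan is to recognize $\cS$ as the skeleton of the slice category $\cC$ and to read the lattice operations directly off Proposition \ref{prop:coproduct}. The first and decisive observation is that $\cC$ is a \emph{thin} category: if $f,g \colon (\sM,e) \to (\sM',e')$ are two morphisms, then $e'f = e = e'g$, and injectivity of $e'$ forces $f = g$. Hence there is at most one morphism between any two objects, so ``there exists a morphism $(\sM,e) \to (\sM',e')$'' is a preorder on objects whose associated skeleton is a genuine poset. By the lemma characterizing isomorphism of objects in $\cC$ through equality of ranges, the isomorphism classes are indexed exactly by the subspaces $\ran{e} \subseteq \bH^2_d$; and since each object $(\sM,e)$ carries a representative minimal multiplier $F(L) = e P_{\sM} U$ with $\ran{F(L)} = \ran{e}$ (Corollary \ref{Multfactor}), this index set is precisely $\cS$. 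Thus I would identify the skeleton of $\cC$ with $\cS$ as a poset.

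Next I would identify the partial order explicitly with inclusion of ranges. One direction is immediate: a morphism $f\colon (\sM,e)\to(\sM',e')$ forces $\ran{e} = \ran{e'f} \subseteq \ran{e'}$. For the converse, suppose $\ran{e} \subseteq \ran{e'}$; viewing these as the operator-range spaces of the corresponding minimal multipliers, the NC Douglas factorization property produces a bounded multiplier factoring one through the other, which descends to a bounded injective module map $(e')^{-1} e \colon \sM \to \sM'$, i.e.\ a morphism in $\cC$. Producing a \emph{bounded} morphism from a mere set-theoretic inclusion of ranges is the only place where analytic content (bounded containment / Douglas factorization) genuinely enters, and I expect it to be the main obstacle; the rest is formal. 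With it in hand, the order on $\cS$ is inclusion.

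I would then extract the lattice operations. In a poset regarded as a thin category, a categorical product is the greatest lower bound and a coproduct is the least upper bound, so Proposition \ref{prop:coproduct} shows that every pair of elements of $\cS$ has a meet $\wedge$ and a join $\vee$. Unwinding the constructions in that proposition identifies the join of $\ran{e_1}$ and $\ran{e_2}$ with the sum $\ran{e_1} + \ran{e_2}$ and the meet with the intersection $\ran{e_1} \cap \ran{e_2}$. Closure of $\cS$ under these operations is automatic, since $F \wedge G$ and $F \vee G$ are by definition representatives of the meet and join objects, hence minimal multipliers, so their ranges lie again in $\cS$.

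Finally, for boundedness I would exhibit a top and a bottom element as a terminal and an initial object of $\cC$. The pair $(\bH^2_d, \mr{id})$ is terminal, because for any $(\sM,e)$ the embedding $e$ itself is the unique morphism into it; thus $\bH^2_d$ is the greatest element of $\cS$ and an identity for $\wedge$. Dually, $(\{0\},0)$ is initial, so $\{0\}$ is the least element and an identity for $\vee$. Combined with the binary meets and joins, this shows $\cS$ is a bounded general lattice. I would close with the remark that, via Lemma \ref{lem:equiv_reps}, this is exactly the earlier multiplier lattice transported to ranges, so the two lattice theorems coincide.
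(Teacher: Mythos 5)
Your proposal is correct and is precisely the ``standard argument'' the paper invokes without writing out: the theorem is stated with no explicit proof beyond the remark that the lattice properties follow from Proposition \ref{prop:coproduct}, and your observation that $\cC$ is thin (so its skeleton is a poset in which products and coproducts are meets and joins), together with the identification of the order with range inclusion via Douglas factorization and of the extremal objects $(\bH^2_d,\mathrm{id})$ and $(\{0\},0)$, supplies exactly the omitted details. The one step you rightly flag as carrying analytic content --- upgrading a set-theoretic inclusion of ranges to a bounded morphism --- is handled correctly (either by the NC Douglas factorization property as you suggest, or by the closed graph theorem applied to $(e')^{-1}e$), so there is no gap.
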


\bibliography{dB}

\begin{thebibliography}{23}
\providecommand{\natexlab}[1]{#1}
\providecommand{\url}[1]{\texttt{#1}}
\expandafter\ifx\csname urlstyle\endcsname\relax
  \providecommand{\doi}[1]{doi: #1}\else
  \providecommand{\doi}{doi: \begingroup \urlstyle{rm}\Url}\fi

\bibitem[Aronszajn(1950)]{Aron-RKHS}
N.~Aronszajn.
\newblock Theory of reproducing kernels.
\newblock \emph{Transactions of the American mathematical society},
  68:\penalty0 337--404, 1950.

\bibitem[Ball et~al.(2016)Ball, Marx, and Vinnikov]{BMV}
J.~Ball, G.~Marx, and V.~Vinnikov.
\newblock Noncommutative reproducing kernel {H}ilbert spaces.
\newblock \emph{J. Funct. Anal.}, 271:\penalty0 1844--1920, 2016.

\bibitem[Beurling(1948)]{Beu48}
A.~Beurling.
\newblock On two problems concerning linear transformations in {H}ilbert space.
\newblock \emph{Acta Math.}, 81:\penalty0 17, 1948.

\bibitem[Davidson and Pitts(1998)]{DP-alg}
K.~R. Davidson and D.~R. Pitts.
\newblock The algebraic structure of non-commutative analytic {T}oeplitz
  algebras.
\newblock \emph{Math. Ann.}, 311\penalty0 (2):\penalty0 275--303, 1998.

\bibitem[Davidson and Pitts(1999)]{DP-inv}
K.~R. Davidson and D.~R. Pitts.
\newblock Invariant subspaces and hyper-reflexivity for free semigroup
  algebras.
\newblock \emph{Proceedings of the London Mathematical Society}, 78\penalty0
  (2):\penalty0 401--430, 1999.

\bibitem[de~Branges(2015)]{dB-ss}
L.~de~Branges.
\newblock Square-summable power series.
\newblock https://www.math.purdue.edu/{$\sim$}branges/square-summable.pdf,
  2015.

\bibitem[deBranges and Rovnyak(2015)]{dBss}
L.~deBranges and J.~Rovnyak.
\newblock \emph{Square summable power series}.
\newblock Courier Corporation, 2015.

\bibitem[Douglas(1966)]{DFL}
R.~Douglas.
\newblock On majorization, factorization, and range inclusion of operators on
  {H}ilbert space.
\newblock \emph{Proc. Amer. Math. Soc.}, 17:\penalty0 413--415, 1966.

\bibitem[Fricain and Mashreghi(2016)]{FMHb2}
E.~Fricain and J.~Mashreghi.
\newblock \emph{The theory of {$\mathscr{H} (b)$} spaces}, volume~2.
\newblock Cambridge University Press, 2016.

\bibitem[Hoffman(2007)]{Hoff}
K.~Hoffman.
\newblock \emph{Banach spaces of analytic functions}.
\newblock Courier Corporation, 2007.

\bibitem[Jury and Martin(2018)]{JM}
M.~Jury and R.~Martin.
\newblock Aleksandrov-{C}lark theory for {D}rury-{A}rveson space.
\newblock \emph{Integral Equations Operator Theory}, In Press, 2018.

\bibitem[Jury and Martin(2019)]{JM-freeSmirnov}
M.~Jury and R.~Martin.
\newblock Unbounded operators affiliated to the free shift on the free {H}ardy
  space.
\newblock \emph{J. Funct. Anal.}, In press, 2019.

\bibitem[Kaliuzhnyi-Verbovetskyi and Vinnikov(2014)]{KVV}
D.~Kaliuzhnyi-Verbovetskyi and V.~Vinnikov.
\newblock \emph{Foundations of free noncommutative function theory}, volume
  199.
\newblock American Mathematical Society, 2014.

\bibitem[Kaliuzhny\u{\i}-Verbovetskyi\u{\i} and Vinnikov(2012)]{KVV-rational2}
D.~Kaliuzhny\u{\i}-Verbovetskyi\u{\i} and V.~Vinnikov.
\newblock Noncommutative rational functions, their difference-differential
  calculus and realizations.
\newblock \emph{Multidimensional Systems and Signal Processing}, 23:\penalty0
  49--77, 2012.

\bibitem[Nikolskii(2012)]{Nik-shift}
N.~K. Nikolskii.
\newblock \emph{Treatise on the shift operator: spectral function theory}.
\newblock Springer, 2012.

\bibitem[Okutmustur and Gheondea(2010)]{GheOkut}
B.~Okutmustur and A.~Gheondea.
\newblock \emph{Reproducing Kernel Hilbert spaces. The Basics, {B}ergman
  spaces, and interpolation problems}.
\newblock Lambert, Saarbruken, Germany, 2010.

\bibitem[Popescu(1989{\natexlab{a}})]{Pop-charfun}
G.~Popescu.
\newblock Characteristic functions for infinite sequences of noncommuting
  operators.
\newblock \emph{J. Operator Theory}, 22\penalty0 (1):\penalty0 51--71,
  1989{\natexlab{a}}.

\bibitem[Popescu(1989{\natexlab{b}})]{Pop-dil}
G.~Popescu.
\newblock Isometric dilations for infinite sequences of noncommuting operators.
\newblock \emph{Trans. Amer. Math. Soc.}, 316:\penalty0 523--536,
  1989{\natexlab{b}}.

\bibitem[Popescu(1989{\natexlab{c}})]{Pop-multfact}
G.~Popescu.
\newblock Multi-analytic operators and some factorization theorems.
\newblock \emph{Indiana Univ. Math. J.}, 38\penalty0 (3):\penalty0 693--710,
  1989{\natexlab{c}}.

\bibitem[Porat and Vinnikov(2019)]{PV-realize}
M.~Porat and V.~Vinnikov.
\newblock Realizations of non-commutative rational functions around a matrix
  centre, {I}: synthesis, minimal realizations and evaluation on stably finite
  algebras.
\newblock arXiv:1905.11304, 2019.

\bibitem[Salomon et~al.(2018)Salomon, Shalit, and Shamovich]{SSS}
G.~Salomon, O.~Shalit, and E.~Shamovich.
\newblock Algebras of bounded noncommutative analytic functions on subvarieties
  of the noncommutative unit ball.
\newblock \emph{Transactions of the American Mathematical Society},
  370:\penalty0 8639--8690, 2018.

\bibitem[Sarason(1986)]{Sarason-dBBthm}
D.~Sarason.
\newblock Shift-invariant spaces from the brangesian point of view.
\newblock In \emph{The Bieberbach Conjecture: Proceedings of the Symposium on
  the Occasion of the Proof}. American Mathematical Soc., 1986.

\bibitem[Sz.-Nagy and Foia\c{s}(1970)]{NF}
B.~Sz.-Nagy and C.~Foia\c{s}.
\newblock \emph{Harmonic analysis of operators on \uppercase{H}ilbert space}.
\newblock American Elsevier publishing company, Inc., New York, N.Y., 1970.

\end{thebibliography}

\end{document}